\documentclass{article}
\usepackage{amsmath}
\usepackage{amssymb}
\usepackage{amsthm}
\usepackage{bbm,mathtools}

\newcommand{\op}[1]{\prescript{o\!}{}{#1}}

\newcommand{\one}{\mathbbm 1}

\def\gph{\mathop{\rm gph}}

\def\reals{\mathbb{R}}
\def\naturals{\mathbb{N}}
\def\uball{\mathbb{B}}
\def\ereals{\overline{\mathbb{R}}}

\def\cone{\mathop{\rm cone}}

\def\supp{\mathop{\rm supp}}

\def\comp{\raise 1pt \hbox{$\scriptstyle\circ$}}

\def\argmin{\mathop{\rm argmin}\limits}

\def\minimize{\mathop{\rm minimize}\limits}
\def\maximize{\mathop{\rm maximize}\limits}
\def\esssup{\mathop{\rm ess\ sup}\nolimits}

\def\st{\mathop{\rm subject\ to}}

\def\dom{\mathop{\rm dom}\nolimits}

\def\naturals{\mathbb{N}}
\def\upto{{\raise 1pt \hbox{$\scriptstyle \,\nearrow\,$}}}
\def\downto{{\raise 1pt \hbox{$\scriptstyle \,\searrow\,$}}}

\def\cl{\mathop{\rm cl}\nolimits}
\def\co{\mathop{\rm co}}

\def\epi{\mathop{\rm epi}}

\def\tos{\rightrightarrows}
\def\FF{(\F_t)_{t\ge 0}}
\def\ovr{\mathop{\rm over}}

\def\A{{\cal A}}
\def\B{{\cal B}}
\def\C{{\cal C}}
\def\cD{{\textrm(D)}}

\def\F{{\cal F}}

\def\L{{\cal L}}
\def\M{{\cal M}}
\def\N{{\cal N}}

\def\R{{\cal ﬂR}}
\def\S{{\cal S}}

\def\T{{\cal T}}
\def\U{{\cal U}}

\def\Y{{\cal Y}}

\newtheorem{theorem}{Theorem}
\newtheorem{lemma}[theorem]{Lemma}
\newtheorem{corollary}[theorem]{Corollary}
\newtheorem{proposition}[theorem]{Proposition}
\newtheorem{example}{Example}
\newtheorem{remark}{Remark}
\theoremstyle{definition}
\newtheorem{definition}{Definition}

\theoremstyle{empty}

\begin{document}
\title{Convex integral functionals of regular processes}
\author{Teemu Pennanen\thanks{Department of Mathematics, King's College London,
Strand, London, WC2R 2LS, United Kingdom} \and 
Ari-Pekka Perkki\"o\thanks{Department of Mathematics, Technische Universit\"at Berlin, Building MA,
Str. des 17. Juni 136, 10623 Berlin, Germany. The author is grateful to the Einstein Foundation for the financial support.}}

\maketitle

\begin{abstract}
This article gives dual representations for convex integral functionals on the linear space of regular processes. This space turns out to be a Banach space containing many more familiar classes of stochastic processes and its dual can be identified with the space of optional Radon measures with essentially bounded variation. Combined with classical Banach space techniques, our results allow for a systematic treatment of stochastic optimization problems over BV processes and, in particular, yields a maximum principle for a general class of singular stochastic control problems.
\end{abstract}

\noindent\textbf{Keywords.} regular process; integral functional; conjugate duality, singular stochastic control
\newline
\newline
\noindent\textbf{AMS subject classification codes.} 46N10, 60G07

\section{Introduction}

This article studies convex integral functionals of the form
\[
EI_h(v) = E\int_0^Th_t(v_t)d\mu_t
\]
defined on the linear space $\R^1$ of regular processes in a filtered probability space $(\Omega,\F,\FF,P)$. Here $\mu$ is a positive optional measure on $[0,T]$ and $h$ is a convex normal integrand on $\Omega\times[0,T]\times\reals^d$. An optional cadlag process $v$ of class $\cD$ is {\em regular} if $Ev_{\tau^\nu}\to Ev_\tau$ for every increasing sequence of stopping times $\tau^\nu$ converging to a finite stopping time $\tau$ or equivalently (see \cite[Remark~50d]{dm82}), if the predictable projection and the left limit of $v$ coincide. Regular processes is quite a large family of stochastic processes containing e.g.\ continuous adapted processes, Levy processes and Feller processes as long as they are of class $(D)$. A semimartingale is regular if and only if it is of class $\cD$ and the predictable BV part of its Doob--Meyer decomposition is continuous.

Inspection of \cite{bis78} reveals that $\R^1$ is a Banach space under a suitable norm and its dual may be identified with the space $\M^\infty$ of optional random measures with essentially bounded variation. Our main result characterizes the corresponding conjugate and subdifferential of $EI_h$ under suitable conditions on the integrand $h$. Our main result applies, more generally, to functionals of the form $EI_h+\delta_{R^1(D)}$, where $\R^1(D)$ denotes the convex set of regular processes that, outside an evanecent set, take values in $D_t(\omega):=\cl\dom h_t(\cdot,\omega)$. Here, as usual, $\delta_{R^1(D)}$ is the {\em indicator function} of $\R^1(D)$ taking the value $0$ on $\R^1(D)$ and $+\infty$ outside of $\R^1(D)$.

Our main result allows for functional analytic treatment of various stochastic optimization problems where one minimizes an integral functional over the space of BV-processes. Our original motivation came from mathematical finance where BV-processes arise naturally as trading strategies in the presence of transaction costs. In this paper, we give an application to singular stochastic control by deriving a dual problem and a maximum principle for a fairly general class of singular control problems and extends and unifies singular control models of e.g.\ \cite{bs77,elk81,bsw80,ls86}. Applications to mathematical finance will be given in a separate article.

This paper combines convex analysis with the general theory of stochastic processes. More precisely, we employ the duality theory of integral functionals on the space of continuous functions developed by \cite{roc71} combined with Bismut's characterization of regular processes as optional projections of continuous stochastic processes; see \cite{bis78}. Our main result states that if the conjugate $h^*$ of $h$ is the optional projection of a convex normal integrand that allows for Rockafellar's dual representation of $I_h$ scenariowise, then under mild integrability conditions, the dual representation of $EI_h+\delta_{\R^1(D)}$ is given simply as the expectation of that of $I_h+\delta_{C(D)}$, where $C(D)$ denotes the continuous selections of $D$. The proof is more involved than the classical results on integral functionals on decomposable spaces or on spaces of continuous functions. To treat the space of regular processes, techniques from both cases need to be combined in a nontrivial way. Our proof is based on recent results on optional projections of normal integrabs from \cite{kp16} and conjugate results for continuous functions from \cite{per17}.


\section{Integral functionals and duality}\label{sec:if}

This section collects some basic facts about integral functionals defined on the product of a measurable space $(\Xi,\A)$ and a Suslin locally convex vector space $U$. In the applications below, $\Xi$ is either $\Omega$, $[0,T]$ or $\Omega\times[0,T]$. Recall that a Hausdorff topological space is {\em Suslin} if it is a continuous image of a complete separable metric space. We will also assume that $U$ is a countable union of Borel sets that are Polish spaces in their relative topology. Examples of such spaces include separable Banach spaces as well as their topological duals when equipped with the weak$^*$-topology. Indeed, such dual spaces are Suslin \cite[Proposition A.9]{tre67} and their closed unit balls are metrizable in the weak$^*$-topology by \cite[Theorem V.5.1]{ds88}, compact by the Banach--Alaoglu theorem, and thus separable by \cite[Theorem I.6.25]{ds88}.

A set-valued mapping $S:\Xi\tos U$ is {\em measurable} if the inverse image $S^{-1}(O):=\{\xi\in\Xi\,|\,S(\xi)\cap O\ne\emptyset\}$ of every open $O\subseteq S$ is in $\A$. An extended real-valued function $f:U\times\Xi\to\ereals$ is said to be a {\em normal integrand} if the {\em epigraphical mapping}
\[
\xi\mapsto\epi f(\cdot,\xi):=\{(u,\alpha)\in U\times\reals|\, f(u,\xi)\leq\alpha\}
\]
is closed-valued and measurable. 
A normal integrand $f$ is said to be {\em convex} if $f(\cdot,\xi)$ is a convex function for every $\xi\in\Xi$. A normal integrand is always $\B(U)\otimes\A$-measurable, so $\xi\mapsto f(u(\xi),\xi)$ is $\A$-measurable whenever $u:\Xi\to U$ is $\A$-measurable. Conversely, if $(\Xi,\A)$ is complete with respect to some $\sigma$-finite measure $m$, then any $\B(U)\otimes\A$-measurable function $f$ such that $f(\cdot,\xi)$ is lsc, is a normal integrand; see Lemma~\ref{lem:nor} in the appendix. Note, however, that the optional $\sigma$-algebra on $\Omega\times[0,T]$ is not complete \cite{ran90}, so we cannot always use this simple characterization when studying integral functionals of optional stochastic processes.

Given a normal integrand and a nonnegative measure $m$ on $(\Xi,\A)$ the associated {\em integral functional} 
\[
I_f(u):=\int_\Xi f(u(\xi),\xi)dm(\xi)
\]
is a well-defined extended real-valued function on the space $L^0(\Xi,\A,m;U)$ of equivalence classes of $U$-valued $\A$-measurable functions. Here and in what follows, we define the integral of a measurable function as $+\infty$ unless the positive part of the function is integrable. This convention is not arbitrary but specifically suited for studying minimization problems involving integral functionals. The function $I_f$ is called the {\em integral functional} associated with the normal integrand $f$. If $f$ is convex, $I_f$ is a convex function on $L^0(\Xi,\A,m;U)$.

Normal integrands are quite general objects and they arise naturally in various applications. We list below some useful rules for checking whether a given function is a normal integrand. When $U$ is a Euclidean space, these results can be found e.g.\ in \cite{roc76,rw98}. For Suslin spaces, we refer to the appendix. 

A function $f:U\times\Xi\rightarrow\reals$ is a {\em Carath\'eodory integrand} if $f(\cdot,\xi)$ is continuous for every $\xi$ and $f(u,\cdot)$ is measurable for every $u\in U$. Caratheodory integrands are normal; see Proposition~\ref{prop:cni} in the appendix. If $S:\Xi\tos U$ is a measurable closed-valued mapping then its {\em indicator function}
\[
\delta_S(u,\xi) :=
\begin{cases}
0\quad&\text{if }u\in S(\xi)\\
+\infty\quad&\text{otherwise},
\end{cases}
\]
is a normal integrand.

Many algebraic operations preserve normality. In particular, pointwise sums, recession functions and conjugates of proper normal integrands are again normal integrands; see Lemma~\ref{lem:oni} in the appendix. Recall that the {\em recession function} of a closed proper convex function $g$ is defined by
\[
g^\infty(u)=\sup_{\alpha>0}\frac{g(\bar u+\alpha u)-g(\bar u)}{\alpha},
\]
where the supremum is independent of the choice of $\bar u$ in the {\em domain} \[
\dom g:=\{u\,|\,g(u)<\infty\}
\]
of $g$; see \cite[Corollary~3C]{roc66}.

When $U$ is in separating duality with another linear space $Y$, the {\em conjugate} of $g$ is the extended real valued function $g^*$ on $Y$ defined by
\[
g^*(y) = \sup_{u\in U}\{\langle u,y\rangle - g(u)\}.
\]
In particular, the conjugate of the indicator function $\delta_S$ of a set $S\subset U$ is the {\em support function}
\[
\sigma_S(y):=\sup_{u\in S}\langle u,y\rangle
\]
of $S$. If $S$ is a cone, then $\sigma_S=\delta_{S^*}$, where 
\[
S^*:=\{y\in Y\,|\,\langle u,y\rangle\le 0\ \forall u\in S\},
\]
the {\em polar cone} of $S$. When $g$ is closed and proper, the {\em biconjugate theorem} says that $g=g^{**}$. This implies, in particular, that if $g$ is closed and proper its recession function can be expressed as
\begin{equation}\label{recsup}
g^\infty=\delta_{\dom g^*}^*.
\end{equation}

\subsection{Integral functionals on decomposable spaces}\label{sec:dec}

A space $\U\subseteq L^0(\Xi,\A,m;U)$ is {\em decomposable} if
\[
\one_Au+\one_{\Xi\setminus A}u'\in\U
\]
whenever $u\in\U$, $A\in\A$ and $u'\in L^0(\Xi,\A,m;U)$ is such that the closure of the range of $u'$ is compact. The following result combines the results of Rockafellar~\cite{roc68,roc71a} with their reformulation to Suslin spaces by Valadier~\cite{val75}.

\begin{theorem}[Interchange rule]\label{thm:icr}
Assume that $U=\reals^d$ or that $\A$ is $m$-complete. Given a normal integrand $f$ on $U$, we have
\[
\inf_{u\in\U}I_f(u) = \int_\Xi\inf_{u\in U}f(u,\xi)dm(\xi)
\]
as long as the left side is less than $+\infty$.
\end{theorem}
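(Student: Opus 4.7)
The plan is to prove the two inequalities separately. One direction is immediate: for any $u \in \U$ one has $f(u(\xi),\xi) \ge \inf_{u'\in U} f(u',\xi)$ pointwise on $\Xi$, so by the $+\infty$-convention for integrals $I_f(u) \ge \int_\Xi \inf_{u'\in U} f(u',\xi) dm(\xi)$. Taking the infimum over $u \in \U$ yields $\inf_{u\in\U} I_f(u) \ge \int_\Xi \inf_{u\in U} f(u,\xi) dm(\xi)$, which always holds and does not use decomposability.

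For the reverse inequality I would follow the classical Rockafellar--Valadier scheme. The first step is to verify that the pointwise infimum $p(\xi) := \inf_{u\in U} f(u,\xi)$ is $\A$-measurable. Since $\{\xi : p(\xi) < \alpha\}$ is the projection onto $\Xi$ of the Borel slice $\{(u,\xi) : f(u,\xi) < \alpha\}$ of $\epi f$, measurability follows from the projection theorem for Suslin spaces. This is precisely where the dichotomy in the hypothesis enters: for $U = \reals^d$ the projection of a Borel set is Borel (or analytic and handled directly by Rockafellar), whereas for general Suslin $U$ the projection is only analytic, which forces the $m$-completeness assumption on $\A$. Once $p$ is measurable, for each $\varepsilon > 0$ the Suslin-space version of Aumann's measurable selection theorem (as in \cite{val75}) provides a measurable $u^\varepsilon : \Xi \to U$ with $f(u^\varepsilon(\xi),\xi) \le \max\{p(\xi) + \varepsilon,\, -1/\varepsilon\}$ for $m$-almost every $\xi$.

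Because the left-hand infimum is finite, fix some $u_0 \in \U$ with $I_f(u_0) < \infty$. Writing $U$ as a countable union of Polish Borel pieces and invoking inner regularity on each, one can find an increasing sequence $B_n \uparrow \Xi$ of measurable sets on which $u^\varepsilon$ has range with compact closure. Decomposability of $\U$ then yields $v_n := \one_{B_n} u^\varepsilon + \one_{\Xi\setminus B_n} u_0 \in \U$. A dominated convergence argument driven by the integrability of the positive part of $f(u_0,\cdot)$ gives $\int_{\Xi\setminus B_n} f(u_0,\xi) dm(\xi) \to 0$, while $\int_{B_n} f(u^\varepsilon,\xi) dm(\xi) \to \int_\Xi f(u^\varepsilon,\xi) dm(\xi)$, producing $\inf_\U I_f \le \int p\, dm + \varepsilon m(\Xi) + o(1)$. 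Passing to the limit $n\to\infty$ and then $\varepsilon \to 0$, with $\varepsilon$ replaced by a suitably chosen measurable tolerance when $\int p\, dm = -\infty$ or $m(\Xi) = \infty$, completes the reverse inequality.

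The main obstacle is the measurable selection step: the proof of measurability of $p$ and the existence of an $\varepsilon$-optimal selector both rely on projection of Borel-measurable multifunctions, and without completeness of $\A$ analytic sets fail to be $\A$-measurable in general, which is exactly what forces the split in the hypothesis. The secondary technical point is the passage $\varepsilon \to 0$ for an infinite measure $m$ or when $p$ is non-integrable from below, handled by the measurable-tolerance refinement mentioned above.
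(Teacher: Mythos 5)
The paper itself gives no proof of Theorem~\ref{thm:icr}; it simply invokes the classical results of Rockafellar \cite{roc68,roc71a} and Valadier \cite{val75}, and your argument is essentially that classical scheme (trivial ``$\ge$'' direction, measurability of the pointwise infimum, $\varepsilon$-optimal measurable selection, patching with a finite-value element of $\U$ via decomposability on sets where the selector has relatively compact range, then monotone/dominated convergence with a measurable tolerance for the $\sigma$-finite case). One inaccuracy is worth correcting: the measurability of $p(\xi)=\inf_u f(u,\xi)$ does not require any projection theorem and is not where the dichotomy in the hypothesis enters. Since $\{p<\alpha\}=\{\xi: \epi f(\cdot,\xi)\cap(U\times(-\infty,\alpha))\neq\emptyset\}$ and $U\times(-\infty,\alpha)$ is open, this set lies in $\A$ directly by the definition of measurability of the epigraphical mapping, for any Suslin $U$ and without completeness; moreover your parenthetical claim that for $U=\reals^d$ ``the projection of a Borel set is Borel'' is false (such projections are only analytic). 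The completeness assumption is genuinely needed only at the selection step for general Suslin $U$, where one applies the von Neumann--Aumann theorem \cite[Theorem~III.22]{cv77} to the jointly measurable level sets $\{(u,\xi): f(u,\xi)\le\max(p(\xi)+\varepsilon,-1/\varepsilon)\}$; for $U=\reals^d$ the level-set mappings are closed-valued and measurable, so a selector exists by Kuratowski--Ryll-Nardzewski/Castaing without completeness \cite[Chapter~14]{rw98}. With that repair your outline is sound and matches the cited proof.
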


%

The interchange rule is convenient for calculating conjugates of integral functionals on decomposable spaces. Assume that $Y$ is a Suslin space in separating duality with $U$ and assume that $\Y\subseteq L^0(\Xi,\A,m;Y)$ is a decomposable space in separating duality with $\U$ under the bilinear form
\[
\langle u,y\rangle := \int_\Xi \langle u(\xi),y(\xi)\rangle dm(\xi).
\]
The first part of the following theorem is Valadier's extension of Rockafellar's conjugation formula to Suslin-valued function spaces; see~\cite{val75}. For a convex function $g$ on $U$, $y\in Y$ is a {\em subgradient} of $g$ at $u$ if
\[
g(u')\ge g(u) + \langle u'-u,y\rangle\quad\forall u'\in U.
\]
The set $\partial g(u)$ of all subgradients is known as the {\em subdifferential} of $g$ at~$u$. We often use the fact $y\in\partial g(u)$ if and only if
\[
g(u)+g^*(y)=\langle u,y\rangle.
\]
For a normal integrand $f$ and for any $u\in L^0(\Xi,\A,m;U)$, we denote by $\partial f(u)$ the set-valued mapping $\xi\mapsto\partial f(u(\xi),\xi)$, where the subdifferential is taken with respect to the $u$-argument.

\begin{theorem}\label{thm:cif}
Assume that $U=\reals^d$ or that $\A$ is $m$-complete. Given a normal integrand $f$ on $U$, the integral functionals $I_f$ and $I_{f^*}$ on $\U$ and $\Y$ are conjugates of each other as soon as they are proper and then $y\in \partial I_f(u)$ if and only if
\begin{align*}
y\in \partial f(u)\quad m\text{-a.e.}
\end{align*}
while $(I_f)^\infty=I_{f^\infty}$.
\end{theorem}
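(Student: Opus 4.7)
The plan is to derive all three assertions from the interchange rule (Theorem~\ref{thm:icr}) together with pointwise convex duality, using that $f^*$ is a convex normal integrand (Lemma~\ref{lem:oni}) and that $(u,\xi)\mapsto\langle u,y(\xi)\rangle$ is a Carath\'eodory, hence normal, integrand for every $y\in\Y$. For the conjugation $(I_f)^* = I_{f^*}$, fix $y\in\Y$ and set $g(u,\xi) := f(u,\xi) - \langle u,y(\xi)\rangle$, a convex normal integrand (Lemma~\ref{lem:oni}). Then
\[
(I_f)^*(y) = \sup_{u\in\U}[\langle u,y\rangle - I_f(u)] = -\inf_{u\in\U} I_g(u),
\]
and because $I_f$ is proper, any $u^0\in\dom I_f$ satisfies $I_g(u^0) = I_f(u^0)-\langle u^0,y\rangle<\infty$, so the hypothesis of Theorem~\ref{thm:icr} holds; it gives $\inf_{\U} I_g = \int_\Xi\inf_{u\in U} g(u,\xi)\,dm(\xi) = -I_{f^*}(y)$. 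A symmetric application to the pair $(f^*,\Y)$, combined with the pointwise biconjugate theorem $f^{**}=f$ (valid $m$-a.e.\ since $I_f$ is proper), yields $(I_{f^*})^* = I_f$.

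The subdifferential characterization is then immediate from Fenchel--Young at the integrated level: $y\in\partial I_f(u)$ iff $I_f(u)+I_{f^*}(y) = \langle u,y\rangle$, i.e.\
\[
\int_\Xi\bigl[f(u(\xi),\xi) + f^*(y(\xi),\xi) - \langle u(\xi),y(\xi)\rangle\bigr]\,dm(\xi) = 0.
\]
The integrand is pointwise nonnegative by Fenchel--Young on $U$, so equality of the integrals forces it to vanish $m$-a.e., which is exactly $y(\xi)\in\partial f(u(\xi),\xi)$ $m$-a.e.

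For the recession formula, I would fix $\bar u\in\dom I_f$ and study the difference quotient $q_\alpha(\xi) := [f(\bar u(\xi)+\alpha u(\xi),\xi) - f(\bar u(\xi),\xi)]/\alpha$, which is non-decreasing in $\alpha>0$ and converges pointwise $m$-a.e.\ to $f^\infty(u(\xi),\xi)$; monotone convergence would then interchange $\lim_{\alpha\to\infty}$ with $\int$, and the matching monotonicity on the integrated level gives $(I_f)^\infty(u) = \lim_{\alpha\to\infty}[I_f(\bar u+\alpha u)-I_f(\bar u)]/\alpha$, which should equal $I_{f^\infty}(u)$. The delicate point---and the main obstacle---is that $q_\alpha$ need not have integrable negative part for any $\alpha$, so a bare appeal to monotone convergence is not justified. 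I would handle this by passing through the dual representation $(I_f)^\infty = \sigma_{\dom(I_f)^*} = \sigma_{\dom I_{f^*}}$ supplied by~(\ref{recsup}) and matching it to $I_{f^\infty}(u) = \int_\Xi\sigma_{\dom f^*(\cdot,\xi)}(u(\xi))\,dm(\xi)$ via a further application of Theorem~\ref{thm:icr} to the normal integrand $(y,\xi)\mapsto\delta_{\dom f^*(\cdot,\xi)}(y) - \langle u(\xi),y\rangle$, closing the gap by a decomposability truncation that replaces a pointwise selection from $\dom f^*(\cdot,\xi)$ with elements of $\dom I_{f^*}\subseteq\Y$ along an exhausting sequence $A\upto\Xi$.
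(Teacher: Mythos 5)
Your handling of the first two assertions is sound. For the conjugacy you in effect re-prove Valadier's theorem from the interchange rule (Theorem~\ref{thm:icr}); the paper simply cites \cite{val75} at this point, so your route is more self-contained but standard. One small correction there: for the reverse direction you invoke $f^{**}(\cdot,\xi)=f(\cdot,\xi)$ $m$-a.e.\ ``since $I_f$ is proper''; in fact you need $f(\cdot,\xi)$ to be convex, lsc and \emph{proper} a.e., and ruling out the value $-\infty$ uses properness of $I_{f^*}$ (an a.e.\ affine minorant through some $\bar y\in\dom I_{f^*}$), not properness of $I_f$ alone --- both are assumed, so this is only a matter of stating the right reason. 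Your subdifferential argument via the Fenchel inequality is exactly the paper's.

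The recession formula is where there is a genuine gap. The obstacle you name --- that the quotients $q_\alpha$ might have no integrable negative part --- is removed precisely by the properness hypothesis on $I_{f^*}$: choose $\bar y\in\dom I_{f^*}$ and use $f(w,\xi)\ge\langle w,\bar y(\xi)\rangle-f^*(\bar y(\xi),\xi)$ with $w=\bar u(\xi)+u(\xi)$; since $f(\bar u(\cdot),\cdot)^+$, $f^*(\bar y(\cdot),\cdot)^+$ and the pairings are integrable, $q_1$ has an integrable minorant, hence so do all $q_\alpha$ with $\alpha\ge1$ by monotonicity, and monotone convergence goes through. The limits are then identified with $I_{f^\infty}(u)$ and $(I_f)^\infty(u)$ using lower semicontinuity of $f(\cdot,\xi)$ and of $I_f$ (the latter because $I_f=(I_{f^*})^*$), via \cite[Corollary~3C]{roc66}; this is the paper's proof and it is shorter than your detour. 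The detour you sketch instead is not carried out and, as described, does not close: $\delta_{\dom f^*}$ is generally \emph{not} a normal integrand because $\dom f^*(\cdot,\xi)$ need not be closed, so Theorem~\ref{thm:icr} can only be applied with $\cl\dom f^*$ in place of $\dom f^*$, and a selection of $\cl\dom f^*$ cannot in general be pushed into $\dom I_{f^*}$ by your truncation, since $f^*$ may be $+\infty$ on $\cl\dom f^*(\cdot,\xi)\setminus\dom f^*(\cdot,\xi)$. In addition, the mixture $\one_A y+\one_{\Xi\setminus A}\bar y$ of two elements of $\Y$ is not literally covered by the paper's definition of decomposability, which only mixes an element of $\Y$ with a function of relatively compact range. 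These points might be repairable with extra work, but as written the identity $(I_f)^\infty=I_{f^\infty}$ is not established; the direct monotone-convergence argument with the $\dom I_{f^*}$-minorant is the intended and complete route.
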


\begin{proof}
The first claim is the main theorem of \cite{val75}. When $I_f$ and $I_{f^*}$ are conjugates of each other, then $y\in\partial I_f(u)$ if and only if
\[
I_f(u)+I_{f^*}(y)=\langle u,y\rangle
\]
which, by the Fenchel inequality $f(u)+f^*(y)\ge \langle u,y\rangle$, is equivalent to 
\[
f(u)+f^*(y)=\langle u,y\rangle\quad m\text{-a.e.}
\]
which in turn means that $y\in\partial f(u)$ $m$-almost everywhere.

To prove the recession formula, let $\bar u\in\dom I_f$ and $\bar y\in\dom I_{f^*}$. We have $f(u,\xi)\ge \langle u,\bar y(\xi)\rangle -f^*(\bar y(\xi),\xi)$ so the function $f(\bar u+ u)-f(\bar u)$ has an integrable lower bound. By convexity, the difference quotient
\[
\frac{f(\bar u+\alpha u)-f(\bar u)}{\alpha}
\]
is nondecreasing in $\alpha$, so monotone convergence theorem gives
\begin{align*}
(I_{f^\infty})(u) &= \int_\Xi\lim_{\alpha\upto\infty}\frac{f(\bar u+\alpha u)-f(\bar u)}{\alpha}dm\\
&= \lim_{\alpha\upto\infty}\int_\Xi\frac{f(\bar u+\alpha u)-f(\bar u)}{\alpha}dm\\
&= \lim_{\alpha\upto\infty}\frac{I_f(\bar u+\alpha u)-I_f(\bar u)}{\alpha}\\
&=(I_f)^\infty(u),
\end{align*}
where the first and the last equation hold since $f(\cdot,\xi)$ and $I_f$ are lower semicontinuous; see \cite[Corollary~3C]{roc66}.
\end{proof}

\subsection{Integral functionals of continuous functions}\label{sec:cont}

Consider now the case where $U=\reals^d$ equipped with the Euclidean topology and $\Xi$ is a compact interval $[0,T]\subset\reals$ equipped with the Borel sigma-algebra and a nonnegative Radon measure $\mu$ with full support, i.e., $\supp \mu=[0,T]$. This section reviews conjugation of convex integral functionals on the space $C$ of $\reals^d$-valued continuous functions on an interval $[0,T]$. Recall that under the supremum norm, $C$ is a Banach space whose dual can be identified with the linear space $M$ of (signed) Radon measures $\theta$ through the bilinear form
\[
\langle v,\theta\rangle :=\int vd\theta.
\]
Here and in what follows, the domain of integration is $[0,T]$ unless otherwise specified. 

Given a normal integrand $h$ on $\reals^d\times[0,T]$, consider the integral functional functional $I_h$ on $C$. The space $C$ is not decomposable so one cannot directly apply the interchange rule to calculate conjugate of $I_h$. Rockafellar~\cite{roc71} and more recently Perkki\"o~\cite{per14,per17} gave conditions under which
\begin{equation}\label{eq:IJ}
(I_h)^*=J_{h^*},
\end{equation}
where for a normal integrand $f$ on $\reals^d\times[0,T]$, the functional $J_f:M\to\ereals$ is defined by
\[
J_f(\theta)=\int f_t((d\theta^a/d\mu)_t)d\mu_t+\int f_t^\infty((d\theta^s/d|\theta^s|)_t)d|\theta^s|_t,
\]
where $\theta^a$ and $\theta^s$ are the absolutely continuous and the singular part, respectively, of $\theta$ with respect to $\mu$ and $|\theta^s|$ is the total variation of $\theta^s$. From now on, we omit the time index and write simply
\[
J_f(\theta)=\int f(d\theta^a/d\mu)d\mu+\int f^\infty(d\theta^s/d|\theta^s|)d|\theta^s|.
\]

The validity of \eqref{eq:IJ} depends on the behavior of the set 
\[
D_t:=\cl\dom h_t
\]
as a function of $t$. Recall that a set-valued mapping $S$ from $[0,T]$ to $\reals^d$ is {\em inner semicontinuous} (isc) if $\{t \mid S_t\cap O\neq\emptyset\}$ is an open set for any open $O$; see \cite[Section~5B]{rw98}. 
We will use the notation $\partial^sh_t:=\partial\delta_{D_t}$. More explicitly, $x\in\partial^sh_t(v)$ means that $v\in D_t$ and
\[
\langle x,v'-v\rangle \le 0\quad\forall v'\in D_t,
\]
i.e.\ $\partial^sh_t(v)$ is the {\em normal cone} to $D_t$ at $v$. Given a $v\in C$, we denote the set-valued mapping $t\mapsto\partial^sh_t(v_t)$ by $\partial^s h(v)$. The following is from \cite{per17}.

\begin{theorem} \label{thm:if1}
Assuming $I_h+\delta_{C(D)}$ and $J_{h^*}$ are proper, they are conjugates of each other if and only if $\dom h$ is isc and $C(D)=\cl(\dom I_h\cap C(D))$, and then $\theta\in\partial (I_h+\delta_{C(D)})(y)$ if and only if
\begin{align}\label{eq:sd}
\begin{split}
d\theta^a/d\mu&\in\partial h(y)\quad\mu\text{-a.e.},\\
d\theta^s/d|\theta^s| &\in\partial^sh(y)\quad|\theta^s|\text{-a.e.}
\end{split}
\end{align}
\end{theorem}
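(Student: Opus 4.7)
The plan is to follow the standard Rockafellar scheme for conjugating integral functionals on non-decomposable function spaces: derive the Fenchel--Young inequality pointwise, integrate it, and then show it is tight. For sufficiency, the easy inequality $(I_h+\delta_{C(D)})^*\le J_{h^*}$ needs no structural hypothesis. Given $v\in C(D)$ and $\theta\in M$, decompose $\theta=\theta^a+\theta^s$ relative to $\mu$. The pointwise Fenchel--Young inequality $\langle v_t,(d\theta^a/d\mu)_t\rangle\le h_t(v_t)+h_t^*((d\theta^a/d\mu)_t)$ integrates against $\mu$ to control the absolutely continuous part. For the singular part, note that since $h^*$ is closed proper convex, formula \eqref{recsup} applied to $h^*$ yields $(h^*)^\infty=\sigma_{\dom h}=\sigma_D$, and $v_t\in D_t$ then gives $\langle v_t,(d\theta^s/d|\theta^s|)_t\rangle\le(h^*)^\infty((d\theta^s/d|\theta^s|)_t)$. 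Adding the two integrated bounds produces $\langle v,\theta\rangle\le I_h(v)+J_{h^*}(\theta)$.

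The reverse inequality, under the two structural hypotheses, is the real work. Fix $\theta$ with $J_{h^*}(\theta)<\infty$; one must produce $v^n\in C(D)\cap\dom I_h$ with $\langle v^n,\theta\rangle-I_h(v^n)\to J_{h^*}(\theta)$. I would first apply the interchange rule of Theorem~\ref{thm:icr} on the decomposable spaces $L^1(\mu;\reals^d)$ and $L^1(|\theta^s|;\reals^d)$ to produce measurable selections $\bar v^a$ and $\bar v^s$ of $D$ realizing, up to a prescribed error, the pointwise Fenchel--Young equalities that collectively make up $J_{h^*}(\theta)$. The crux is then to splice $\bar v^a$ and $\bar v^s$ into a single continuous selection of $D$ lying in $\dom I_h$. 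Inner semicontinuity of $\dom h$ is invoked precisely here, via a Michael-type continuous selection theorem, to approximate a measurable $D$-valued target within an arbitrarily small neighborhood of its graph, and the density hypothesis $C(D)=\cl(\dom I_h\cap C(D))$ is then used to perturb further into $\dom I_h$ while preserving the asymptotic equality. I expect this gluing step to be the main obstacle: since $\mu$ and $|\theta^s|$ may be mutually singular, the continuous path must simultaneously satisfy distinct pointwise optimality requirements on disjoint supports, and maintaining control of $I_h(v^n)$ along the approximating sequence—ruling out an energy leak in the limit—is the delicate point.

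Conversely, if $(I_h+\delta_{C(D)})^* = J_{h^*}$, then $I_h+\delta_{C(D)}$ is a conjugate and hence lower semicontinuous on $C$; its effective domain must therefore have closure equal to $C(D)$, yielding $C(D)=\cl(\dom I_h\cap C(D))$. A failure of inner semicontinuity of $\dom h$ at some $t_0$ would permit, by the same decomposable reasoning run in reverse, construction of a measure $\theta$ supported near $t_0$ for which $J_{h^*}(\theta)$ strictly exceeds any value attainable by continuous selections, contradicting the conjugacy. Finally, the subdifferential characterization is immediate: $\theta\in\partial(I_h+\delta_{C(D)})(v)$ is equivalent to $\langle v,\theta\rangle = (I_h+\delta_{C(D)})(v)+J_{h^*}(\theta)$, and since the two pointwise Fenchel bounds derived above sum exactly to this quantity, equality in the integral forces equality pointwise $\mu$-a.e.\ and $|\theta^s|$-a.e., which is precisely \eqref{eq:sd}.
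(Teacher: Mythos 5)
You should first note that the paper contains no proof of Theorem~\ref{thm:if1}: it is quoted from \cite{per17} (with antecedents in \cite{roc71}), so there is no in-paper argument to match, and your sketch has to stand on its own. It does not, because everything beyond the elementary inequality is left as a plan. The easy half is fine: the pointwise Fenchel inequality together with $(h^*)^\infty=\sigma_D$ (via \eqref{recsup}) gives $(I_h+\delta_{C(D)})^*\le J_{h^*}$. But the reverse inequality, which is the entire content of the sufficiency claim, is only described: producing a single continuous selection that simultaneously nearly attains the Fenchel equality $\mu$-a.e.\ and runs off in the recession directions dictated by $d\theta^s/d|\theta^s|$ on the (typically $\mu$-null) support of $\theta^s$, while keeping $I_h$ under control, is precisely the hard step, and you explicitly flag it as unresolved rather than carry it out. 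Note also that the hypothesis $C(D)=\cl(\dom I_h\cap C(D))$ only provides sup-norm approximation, which by itself gives no control on the values of $I_h$ along the approximating sequence, so the "perturb further into $\dom I_h$" step needs an actual argument, not an appeal to density. Finally, "conjugates of each other" requires in addition that $I_h+\delta_{C(D)}$ be lower semicontinuous (so that it coincides with its biconjugate); you never address this. (It can be obtained by Fatou's lemma from the affine minorant supplied by any element of $\dom J_{h^*}$, exactly as the paper does for the stochastic analogue at the end of the proof of Theorem~\ref{thm:ifam2}.)

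The necessity half also has a concrete flaw: from conjugacy you infer lower semicontinuity of $I_h+\delta_{C(D)}$ and then assert that "its effective domain must therefore have closure equal to $C(D)$"; this is a non sequitur, since lower semicontinuity says nothing about density of $\dom I_h\cap C(D)$ in $C(D)$. A correct route compares support functions of the domains through recession functions, e.g.\ $\sigma_{\cl\dom(I_h+\delta_{C(D)})}=\bigl((I_h+\delta_{C(D)})^*\bigr)^\infty=(J_{h^*})^\infty$, and identifies the latter with $\sigma_{C(D)}$. Likewise, the necessity of inner semicontinuity of $\dom h$ is only asserted ("construction of a measure $\theta$ supported near $t_0$ ... contradicting the conjugacy"); the construction is the proof, and it requires measurable-selection work that is absent. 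The only part that is genuinely complete is the subdifferential characterization, which is indeed immediate once conjugacy and the integrated Fenchel inequalities are in hand. In short: the strategy (interchange rule plus Michael-type selections) is in the spirit of the cited literature, but as written the proposal proves neither implication of the equivalence.
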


The conditions of the theorem have been analyzed in \cite{per17}. The inner semicontinuity condition goes back to the continuous selection theorems of Michael \cite{mic56}. The domain condition  $C(D)=\cl(\dom I_h\cap C(D))$ holds automatically, in particular, if $h$ is an indicator function. The condition means that $\dom I_h$ is dense in $C(D)$. For example, for $h_t(v)=v/t$, $t>0$, and the Lebesgue measure, the condition is satisfied if and only if $h_0(v)=\delta_{\{0\}}(v)$.



\section{Integral functionals of continuous processes}\label{sec:ifni}

For the remainder of this paper, we fix a complete probability space $(\Omega,\F,P)$. This section studies integral functionals on the Banach space $L^1(C)$ of random continuous functions $v$ with the norm
\[
\|v\|_{L^1(C)}:=E\sup_{t\in[0,T]}|v_t|.
\]
Here and in what follows, $E$ denotes the integral with respect to $P$ (expectation). The results of this section will be used to derive our main results on integral functionals of regular processes

We endow the space $M$ of Radon measures with the Borel sigma-algebra associated with the weak*-topology and we denote by $L^\infty(M)$ the linear space of $M$-valued random variables $\theta$ with essentially bounded variation\footnote{By usual monotone class arguments, the elements of $L^\infty(M)$ are {\em random Radon measures} in the sense of \cite{dm82}.}. The {\em total variation} of a $\theta\in M$ will be denoted by $\|\cdot\|_{TV}$. The first part of the following is from \cite[Theorem~2]{bis78}.

\begin{theorem}\label{thm:banach0}
The Banach dual of $L^1(C)$ may be identified with $L^\infty(M)$ through the bilinear form
\[
\langle v,\theta\rangle := E\int v d\theta.
\]
The dual norm on $L^\infty(M)$ can be expressed as
\[
\|\theta\|_{L^\infty(M)} = \esssup\|\theta\|_{TV}.
\]
\end{theorem}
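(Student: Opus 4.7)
The theorem asserts two things. For the duality identification, I would simply invoke \cite[Theorem~2]{bis78} as the authors do. Conceptually, one may also identify $L^1(C)$ with the Bochner space $L^1(\Omega,\F,P;C[0,T])$ (strong measurability is automatic since $C[0,T]$ is separable), then apply the standard vector-valued $L^1$-duality $L^1(\Omega;E)^*=L^\infty_{w^*}(\Omega;E^*)$ with $E=C[0,T]$, and finally use Riesz representation to identify $E^*$ with $M$. Weak*-measurability of $\omega\mapsto\theta(\omega)\in M$ agrees with Borel measurability on norm-bounded sets because the weak* topology is metrizable there, as noted in the excerpt for Suslin duals.

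For the dual norm formula, I would prove the two inequalities directly. The upper bound is a pointwise estimate: for every $v\in L^1(C)$,
\[
|\langle v,\theta\rangle|\le E[\|v\|_\infty\|\theta\|_{TV}]\le \|v\|_{L^1(C)}\cdot\esssup\|\theta\|_{TV},
\]
so $\|\theta\|_{L^\infty(M)}\le\esssup\|\theta\|_{TV}$.

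For the reverse inequality, fix $\alpha<\esssup\|\theta\|_{TV}$ and $\eps>0$, and set $A:=\{\omega:\|\theta(\omega)\|_{TV}>\alpha\}$, which has $P(A)>0$. The map $\omega\mapsto\|\theta(\omega)\|_{TV}$ is $\F$-measurable because, by Riesz representation and separability of the unit ball $C_1:=\{v\in C:\|v\|_\infty\le 1\}$, one has $\|\theta(\omega)\|_{TV}=\sup_{v\in D}\int v\,d\theta(\omega)$ for any countable dense $D\subset C_1$, with each term measurable by the definition of $L^\infty(M)$. A measurable selection argument, applied to the Carath\'eodory integrand $(v,\omega)\mapsto -\int v\,d\theta(\omega)$ on the Polish space $C_1$, produces an $\F$-measurable $v:\Omega\to C_1$ satisfying $\int v(\omega)\,d\theta(\omega)\ge\|\theta(\omega)\|_{TV}-\eps$. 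Then $\tilde v:=\one_A v\in L^1(C)$ satisfies $\|\tilde v\|_{L^1(C)}\le P(A)$ and $\langle\tilde v,\theta\rangle\ge(\alpha-\eps)P(A)$, so $\|\theta\|_{L^\infty(M)}\ge\alpha-\eps$. Letting $\eps\downarrow 0$ and then $\alpha\uparrow\esssup\|\theta\|_{TV}$ yields the desired lower bound.

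The chief obstacle is this measurable selection. One must verify that $(v,\omega)\mapsto\int v\,d\theta(\omega)$ is genuinely Carath\'eodory on $C_1\times\Omega$: continuity in $v$ under the sup norm follows by dominated convergence against $|\theta(\omega)|$, while measurability in $\omega$ is precisely the defining property of $\theta\in L^\infty(M)$. Once this is in hand, the existence of a jointly measurable $\eps$-maximizer follows from standard measurable-maximum results, e.g.\ Aumann's theorem applied to the closed-valued mapping $\omega\mapsto\{v\in C_1:\int v\,d\theta(\omega)\ge\|\theta(\omega)\|_{TV}-\eps\}$.
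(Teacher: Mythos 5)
Your proposal is correct, but for the norm formula it takes a genuinely different route from the paper. The paper, after citing \cite[Theorem~2]{bis78} for the identification (as you do), computes the dual norm by a one-line convex-duality argument: it writes $\sup\{\langle\theta,v\rangle\mid E\|v\|_C\le 1\}$ via a Lagrangian in the multiplier $\lambda\ge 0$, evaluates the inner supremum over $v\in L^1(C)$ using the conjugation theorem for integral functionals on decomposable spaces (Theorem~\ref{thm:cif}, i.e.\ the Rockafellar--Valadier interchange machinery, which applies since $L^1(C)$ is decomposable, $C$ is Suslin and the probability space is complete), obtaining $\inf_\lambda\{E\delta_{\uball}(\theta/\lambda)+\lambda\}=\esssup\|\theta\|_{TV}$. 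You instead prove the two inequalities by hand: the upper bound by the pointwise estimate, and the lower bound by an explicit measurable $\eps$-maximizing selection $v(\omega)\in C_1$ of the Carath\'eodory integrand $(v,\omega)\mapsto\int v\,d\theta(\omega)$. This is essentially the measurable-selection argument that the paper's appeal to Theorem~\ref{thm:cif} encapsulates, so your approach is more elementary and self-contained, at the cost of redoing by hand (joint measurability, Aumann selection -- which, like the paper's route, needs the completeness of $(\Omega,\F,P)$ that the paper assumes) what the decomposable-space machinery delivers in one step; the paper's route buys brevity and reuses a theorem it needs anyway. Two small points to tidy: in the lower bound you should normalize, e.g.\ use $\one_Av/P(A)$ (or divide by $\|\tilde v\|_{L^1(C)}$, which is strictly positive once $\eps<\alpha$) to pass from $\langle\tilde v,\theta\rangle\ge(\alpha-\eps)P(A)$ with $\|\tilde v\|_{L^1(C)}\le P(A)$ to $\|\theta\|_{L^\infty(M)}\ge\alpha-\eps$; and your alternative Bochner-space identification is fine precisely because you take the weak*-measurable version $L^\infty_{w^*}(\Omega;M)$ (plain Bochner $L^\infty(\Omega;M)$ would require the Radon--Nikodym property, which $M$ lacks) and reconcile it with the paper's Borel/weak* measurability via metrizability of bounded sets.
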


\begin{proof}
By Lagrangian duality,
\begin{align*}
\|\theta\|_{L^\infty(M)} &= \sup\{\langle \theta,v\rangle\mid E\|v\|_{C}\le 1\}\\
&=\inf_{\lambda\in\reals_+} \sup_{v}[\langle \theta,v\rangle-\lambda E\|v\|_{C}+\lambda]\\
&=\inf_{\lambda\in\reals_+}\{E\delta_{\uball}(\theta/\lambda)+\lambda\}\\
&=\esssup\|\theta\|_{TV},
\end{align*}
where $\uball$ is the closed unit ball of the total variation norm and the third equality follows from Theorem~\ref{thm:cif}.
\end{proof}

We will study integral functionals associated with normal integrands that are defined for each $\omega\in\Omega$ as integral functionals on $C$ and $M$. Both $C$ and $M$ are countable unions of Borel sets that are Polish spaces in the relative topology, so we are in the setting of Section~\ref{sec:if}.  We allow both the integrand $h$ and the measure $\mu$ to be random. More precisely, we will assume that $\mu$ is a nonnegative random Radon measure with full support almost surely and that $h$ is a convex normal integrand on $\reals^d\times\Xi$, where $\Xi=\Omega\times[0,T]$ is equipped with the product sigma-algebra $\F\otimes\B([0,T])$. We define $I_h:C\times\Omega\to\ereals$, $C(D):\Omega\tos C$ and $J_{h^*}:M\times\Omega\to\ereals$ by
\begin{align*}
I_h(v,\omega)&:=I_{h(\cdot,\omega)}(v),\\
C(D)(\omega)&:=C(D(\omega)),\\
J_{h^*}(\theta,\omega)&:= J_{h^*(\cdot,\omega)}(\theta),
\end{align*}
where the right sides are defined as in Section~\ref{sec:cont}.

\begin{lemma}\label{lem:sn}
If $(\omega,t)\mapsto D_t(\omega)$ is measurable, closed-valued and isc, then $C(D)$ is measurable and closed-valued.
\end{lemma}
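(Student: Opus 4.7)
My plan is to treat closedness and measurability of $C(D)$ separately, implicitly using convex-valuedness of $D$, which holds in the paper's context since $D_t(\omega) = \cl\dom h_t(\cdot,\omega)$ for a convex normal integrand. Closedness of $C(D)(\omega)$ for each fixed $\omega$ is immediate: if $v^n \in C(D)(\omega)$ converges uniformly to some $v \in C$, then $v(t) = \lim_n v^n(t) \in D_t(\omega)$ for every $t \in [0,T]$ by closedness of $D_t(\omega)$, so $v \in C(D)(\omega)$.

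For measurability, I would use separability of $C$ to reduce to showing $\{\omega : C(D)(\omega) \cap B(v_0,\varepsilon) \ne \emptyset\} \in \F$ for each $v_0$ in a countable dense subset of $C$ and each rational $\varepsilon > 0$, and then establish the equivalence
\[
C(D)(\omega) \cap B(v_0,\varepsilon) \ne \emptyset \iff \sup_{t \in [0,T]} \mathrm{dist}(v_0(t), D_t(\omega)) < \varepsilon.
\]
The forward direction is immediate from $\|v - v_0\|_\infty < \varepsilon$ together with $v(t) \in D_t(\omega)$. The reverse direction is a quantitative form of Michael's continuous selection theorem: under isc, closed and convex values of $D$, a continuous $v_0$ satisfying $\mathrm{dist}(v_0(t), D_t(\omega)) < \varepsilon$ for all $t$ can be perturbed within sup-norm distance $\varepsilon$ to a continuous selection of $D(\omega)$.

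Given this equivalence, measurability of the right-hand condition in $\omega$ follows from two observations. First, $(\omega,t) \mapsto \mathrm{dist}(v_0(t), D_t(\omega))$ is jointly $\F \otimes \B([0,T])$-measurable because $D$ is a measurable set-valued map and $v_0$ is continuous. Second, the measurable projection theorem, valid under the standing completeness of $(\Omega,\F,P)$, implies that $\omega \mapsto \sup_t \mathrm{dist}(v_0(t), D_t(\omega))$ is $\F$-measurable, since its superlevel sets are projections onto $\Omega$ of jointly measurable subsets of $\Omega \times [0,T]$. I expect the main obstacle to be the approximate Michael selection theorem, which I would cite from the continuous-selection literature rather than reprove here; all other steps are essentially bookkeeping with the topological and measure-theoretic structure.
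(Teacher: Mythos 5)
Your proposal is correct and follows essentially the same route as the paper's proof: both reduce measurability of $C(D)$ to that of $\omega\mapsto\sup_{t\in[0,T]}d(v_0(t),D_t(\omega))$ (the paper via the distance-function criterion and the identity $d(v,C(D(\omega)))=\sup_t d(v_t,D_t(\omega))$, you via preimages of balls and the corresponding strict-inequality equivalence), with Michael's selection theorem---using the implicit convexity of $D$---supplying the nontrivial direction, and joint measurability of $(\omega,t)\mapsto d(v_0(t),D_t(\omega))$ plus a projection/completeness argument giving measurability of the supremum. The remaining differences are only bookkeeping.
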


\begin{proof}
That $C(D)$ is closed-valued is evident. By \cite[Theorem~1]{roc71a}, it suffices to show that $\omega\mapsto d(v,C(D(\omega)))$ is measurable for every $v\in C$. We have
\begin{align*}
d(v,C(D(\omega))) &= \inf_{w\in C(D(\omega))}\sup_{t\in[0,T]}|w_t-v_t|\\
&\ge \sup_{t\in[0,T]}\inf_{w\in C(D(\omega))}|w_t-v_t|\\
&= \sup_{t\in[0,T]}\inf_{w\in D_t(\omega)}|w-v_t|\\
&= \sup_{t\in[0,T]}d(v_t,D_t(\omega))=:r(\omega),
\end{align*}
where the third equality follows from the inner semicontinuity. On the other hand, fixing an $\epsilon>0$ and defining $S_t(\omega):=\uball_{r(\omega)+\epsilon}(v_t)$, the mapping $t\mapsto D_t(\omega)\cap S_t(\omega)$ is isc, by \cite[Theorem~??]{per17}. By Michael's selection theorem, it admits a continuous selection $\bar w$, so
\[
d(v,C(D(\omega)))\le r(\omega)+\epsilon.
\]
Since $\epsilon>0$ was arbitrary, we must have 
\[
d(v,C(D(\omega)))=\sup_{t\in[0,T]}d(v_t,D_t(\omega)).
\]
By \cite[Proposition~14.47]{rw98}, the measurability of $D$ implies that $(\omega,t)\mapsto d(v_t,D_t(\omega))$ is measurable. The measurability of $d(v,C(D(\omega)))$ now follows from \cite[Lemma~III.39]{cv77}.
\end{proof}

\begin{lemma}\label{lem:nif}
If $h(\omega)$ satisfies, for $P$-almost every $\omega$, the conditions of Theorem~\ref{thm:if1}, then $I_h+\delta_{C(D)}$ and $J_{h^*}$ are normal integrands conjugate to each other. 
\end{lemma}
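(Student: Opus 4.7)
The plan is to apply Theorem~\ref{thm:if1} scenariowise so that the conjugacy relation holds for $P$-a.e.\ $\omega$, and then show that both sides define normal integrands. Since $(\Omega,\F,P)$ is complete, the appendix's Lemma~\ref{lem:nor} reduces normality to joint Borel measurability plus fiberwise lower semicontinuity; the latter is automatic from the scenariowise conjugacy (proper conjugates are lsc), so the real content lies in checking joint measurability in $(v,\omega)$ and $(\theta,\omega)$.

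First I would handle $I_h+\delta_{C(D)}$ on $C\times\Omega$. The hypothesis of Theorem~\ref{thm:if1} includes inner semicontinuity of $\dom h$, so Lemma~\ref{lem:sn} applies and tells us that $C(D)$ is a measurable closed-valued mapping; hence $\delta_{C(D)}$ is a normal integrand. For $I_h$, the evaluation $(v,t)\mapsto v_t$ is continuous, hence $\B(C)\otimes\B([0,T])$-measurable, and $h$ is $\B(\reals^d)\otimes\F\otimes\B([0,T])$-measurable, so the composition $(v,t,\omega)\mapsto h(v_t,t,\omega)$ is jointly measurable. Integrating against the random Radon measure $\mu$ by a Fubini-type argument yields $\B(C)\otimes\F$-measurability of $(v,\omega)\mapsto I_h(v,\omega)$. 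Combined with the fiberwise lower semicontinuity supplied by Theorem~\ref{thm:if1}, Lemma~\ref{lem:nor} gives normality of $I_h+\delta_{C(D)}$.

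For $J_{h^*}$ I would avoid unpacking its Lebesgue decomposition directly. Instead, note that Theorem~\ref{thm:if1} applied for each fixed $\omega$ identifies $J_{h^*}(\cdot,\omega)$ with the conjugate of the proper normal integrand $I_h(\cdot,\omega)+\delta_{C(D(\omega))}$ just proven normal in the previous step. Since conjugates of proper normal integrands are again normal integrands (Lemma~\ref{lem:oni} in the appendix), $J_{h^*}$ is a normal integrand on $M\times\Omega$, and the same scenariowise identification establishes the conjugacy as normal integrands.

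The main obstacle is the joint-measurability step for $I_h$: one must integrate against the random measure $\mu$ while tracking $\B(C)\otimes\F$-measurability. This is essentially Fubini, but care is required because $\mu$ is itself random and only $\F$-measurable as a map into the Suslin space of Radon measures, so one has to verify that the iterated integral $\int h(v_t,t,\omega)\,d\mu_t(\omega)$ really defines a jointly Borel function on $C\times\Omega$; once this is established, the conjugation route bypasses any direct measurability argument for the more delicate object $J_{h^*}$.
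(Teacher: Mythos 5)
Your proposal is correct and follows the paper's skeleton---scenariowise conjugacy from Theorem~\ref{thm:if1}, normality of $\delta_{C(D)}$ from Lemma~\ref{lem:sn}, and normality of $J_{h^*}$ by viewing it as the conjugate of the proper normal integrand $I_h+\delta_{C(D)}$ via Lemma~\ref{lem:oni}---but you handle the one nontrivial measurability step by a different argument. The paper obtains measurability (indeed normality) of $I_h$ on $C\times\Omega$ by invoking the appendix result Theorem~\ref{thm:mif}, whose proof proceeds by truncation, Moreau--Yosida regularization and a decomposition of $\mu$ into atomless and purely atomic parts, and which is formulated for general Suslin spaces of Borel functions satisfying conditions (a) or (b). You instead exploit that for $U=C$ the evaluation $(v,t)\mapsto v_t$ is jointly continuous, so $(v,\omega,t)\mapsto h_t(v_t,\omega)$ is $\B(C)\otimes\F\otimes\B([0,T])$-measurable, and then integrate against the random measure; the ``Fubini-type'' step you flag is exactly a monotone class argument as in Lemma~\ref{lem:op0} with the extra parameter $v$ carried along (treating $h^+$ and $h^-$ separately to respect the convention that the integral is $+\infty$ unless the positive part is integrable), and it does go through. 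Your route is more elementary and self-contained for $C$, and it also sidesteps the question of lower semicontinuity of $I_h(\cdot,\omega)$ on its own, since you only need lsc of the sum $I_h+\delta_{C(D)}$, which you correctly extract from the scenariowise conjugacy before applying Lemma~\ref{lem:nor} on the complete space $(\Omega,\F,P)$. What the paper's Theorem~\ref{thm:mif} buys in exchange for its heavier approximation machinery is a statement valid for function spaces in which pointwise evaluation is not jointly continuous, which is not needed for the case $U=C$ treated here.
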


\begin{proof}
By Theorem~\ref{thm:if1}, $I_h(\cdot,\omega)+\delta_{C(D(\omega)}$ and $J_{h^*}(\cdot,\omega)$ are conjugate to each other almost surely. The uniform topology on $C$ satisfies both (a) and (b) of Theorem~\ref{thm:mif} in the appendix, so $I_h$ is a normal integrand. By Lemma~\ref{lem:sn}, $\delta_{C(D)}$ is a normal integrand as well, so  $I_h+\delta_{C(D)}$ and $J_{h^*}$ are normal integrands by Lemma~\ref{lem:oni} in the appendix.
\end{proof}

By Lemma~\ref{lem:nif} and Lemma~\ref{lem:nor}, the integral functionals $EI_h:L^1(C)\to\ereals$ and $EJ_{h^*}:L^\infty(M)\to\ereals$ are well defined. An application of the interchange rule~Theorem~\ref{thm:icr} and Lemma~\ref{lem:nif} now gives expressions for the conjugate and subdifferential of $E[I_h+\delta_{C(D)}]$. Recall the notation for the subdifferential mapping from Section~\ref{sec:dec}.

\begin{theorem}\label{thm:1}
If $h(\omega)$ satisfies, for $P$-almost every $\omega$, the conditions of Theorem~\ref{thm:if1}, then the convex functions $E[I_h+\delta_{C(D)}]:L^1(C)\to\ereals$ and $EJ_{h^*}:L^\infty(M)\to\ereals$ are conjugate to each other as soon as they are proper and then $\theta\in \partial E[I_h+\delta_{C(D)}](v)$ if and only if
\begin{align*}
d\theta^a/d\mu&\in\partial h(v)\quad \mu\text{-a.e.},\\
d\theta^s/d|\theta^s| &\in \partial^sh(v)\quad|\theta^s|\text{-a.e.}
\end{align*}
almost surely.
\end{theorem}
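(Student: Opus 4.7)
The overall strategy is to lift the scenariowise conjugation result of Theorem~\ref{thm:if1} through the decomposable-space machinery of Theorem~\ref{thm:cif}. By Lemma~\ref{lem:nif}, $f := I_h + \delta_{C(D)}$ and $g := J_{h^*}$ are normal integrands on $C$ and $M$ respectively that are, $P$-almost surely, convex, proper, lower semicontinuous, and mutual conjugates under the pairing $\langle v,\theta\rangle = \int v\,d\theta$. Thus it suffices to show that passing from these pointwise conjugates to their integrated counterparts on $L^1(C)$ and $L^\infty(M)$ is exactly an instance of Theorem~\ref{thm:cif}, applied with $(\Xi,\A,m) = (\Omega,\F,P)$, $U = C$ and $Y = M$.

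The hypotheses of Theorem~\ref{thm:cif} are verified as follows. Completeness of $\F$ with respect to $P$ is assumed from the outset, so the condition ``$\A$ is $m$-complete'' holds. The Suslin property of $C$ and $M$ (weak$^*$) was noted in Section~\ref{sec:if}. Theorem~\ref{thm:banach0} identifies $L^\infty(M)$ as the Banach dual of $L^1(C)$ through the bilinear form
\[
\langle v,\theta\rangle = E\int v\,d\theta,
\]
which yields the required separating duality. Decomposability of both $L^1(C)$ and $L^\infty(M)$ is immediate: if $u$ belongs to either space, $A \in \F$, and $u'$ is an $\F$-measurable selection whose range closure is compact (hence norm-bounded), then $\one_A u + \one_{\Omega\setminus A} u'$ inherits integrability of the supremum norm, respectively essential boundedness of the total variation, from $u$ and $u'$. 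Applying the first assertion of Theorem~\ref{thm:cif} to $f$, whose integrand-conjugate is $g$ by Lemma~\ref{lem:nif}, gives that $E[I_h + \delta_{C(D)}] = EI_f$ and $EJ_{h^*} = EI_g$ are conjugate to each other on $L^1(C)$ and $L^\infty(M)$ as soon as they are proper, which is the first assertion of the theorem.

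For the subdifferential characterization, Theorem~\ref{thm:cif} gives that $\theta \in \partial E[I_h+\delta_{C(D)}](v)$ if and only if
\[
\theta(\omega) \in \partial\bigl(I_h(\cdot,\omega) + \delta_{C(D(\omega))}\bigr)(v(\omega)) \quad P\text{-a.s.}
\]
By Theorem~\ref{thm:if1} applied scenariowise, this pointwise subdifferential inclusion is equivalent to
\[
d\theta^a/d\mu \in \partial h(v) \quad \mu\text{-a.e.}, \qquad d\theta^s/d|\theta^s| \in \partial^s h(v) \quad |\theta^s|\text{-a.e.},
\]
which yields the stated conclusion.

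The only nontrivial step is the verification of decomposability of $L^\infty(M)$, where one must confirm that truncation-and-gluing by a compactly-ranged selection preserves essential boundedness of total variation; this is immediate from $\|\one_A u + \one_{\Omega\setminus A} u'\|_{TV} \le \|u\|_{TV} + \|u'\|_{TV}$ and the fact that the closure-of-range compactness of $u'$ forces $\|u'\|_{TV}$ to be bounded on $\Omega$. Everything else is a direct appeal to results already established in the paper.
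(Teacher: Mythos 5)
Your proof is correct and follows essentially the same route as the paper: invoke Lemma~\ref{lem:nif} to get that $I_h+\delta_{C(D)}$ and $J_{h^*}$ are scenariowise conjugate normal integrands, then apply Theorem~\ref{thm:cif} over $(\Omega,\F,P)$ with $U=C$, $Y=M$ (the duality coming from Theorem~\ref{thm:banach0}) for the conjugacy, and combine Theorem~\ref{thm:cif} with Theorem~\ref{thm:if1} for the subdifferential characterization. The extra verifications you supply (decomposability of $L^1(C)$ and $L^\infty(M)$, the Suslin/Polish structure, separating duality) are points the paper leaves implicit, and your arguments for them are sound.
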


\begin{proof}
Theorem~\ref{thm:cif} and Lemma~\ref{lem:nif} give
\begin{align*}
(E[I_h+\delta_{C(D)}])^*(\theta) &= E[I_h+\delta_{C(D)}]^*(\theta) =EJ_{h^*}(\theta).
\end{align*}
The subgradient characterization now follows from Theorems~\ref{thm:cif} and \ref{thm:if1}.

\end{proof}

\section{Regular processes}\label{sec:reg}

Let $\FF$ be an increasing sequence of $\sigma$-algebras on $\Omega$ that satisfies the usual hypotheses that $\F_t=\bigcap_{t'>t}\F_{t'}$ and $\F_0$ contains all the $P$-null sets. We denote by $\T$ the set of {\em stopping times}, that is, functions $\tau:\Omega\to[0,T]\cup\{+\infty\}$ such that $\{\tau\le t\}\in\F_t$ for all $t\in[0,T]$. A process is {\em optional} if it is measurable with respect to the $\sigma$-algebra generated by right-continuous adapted processes. If $v$ is {\em $\T$-integrable} in the sense that $v_\tau$ is integrable for every $\tau\in\T$, then, by \cite[Theorem 5.1]{hwy92}, there exists a unique (up to indistinguishability) optional process $\op v$ such that
\begin{align}\label{eq:op}
E[v_\tau\one_{\{\tau<\infty\}}\mid \F_\tau] &= \op{v}_\tau\one_{\{\tau<\infty\}}\quad P\text{-a.s. for all $\tau\in\T$}.
\end{align}
The process $\op v$ is called the {\em optional projection} of $v$. In particular, every $v\in L^1(C)$ has a unique optional projection.

We will denote by $\R^1$ the space of {\em regular processes}, i.e., the optional c\'adl\'ag\footnote{right-continuous with left limits} processes $v$ of class $\cD$ such that $Ev_{\tau^\nu}\to Ev_\tau$ for every increasing sequence of stopping times $\tau^\nu$ converging to a finite stopping time $\tau$ or equivalently (see \cite[Remark~50d]{dm82}), such that the predictable projection and the left limit of $v$ coincide. Recall that a process $v$ is of {\em class} $\cD$ if $\{v_\tau\,|\, \tau\in \T\}$ is uniformly integrable. By \cite[Theorem~3]{bis78}, the optional projection is a linear surjection of $L^1(C)$ to $\R^1$.

\begin{remark}\label{rem:regular}
An optional c\'adl\'ag process $v$ of class $\cD$ is in $\R^1$, in particular, if it is {\em quasi left-continuous} in the sense that $\lim v_{\tau^\nu}=v_\tau$ almost surely for any strictly increasing sequence of stopping times  $(\tau^\nu)_{\nu=1}^\infty$ with $\tau^\nu\nearrow\tau$. Conversely, if the filtration $\FF$ is quasi-left continuous then quasi left-continuous processes of class $\cD$ are regular; see \cite[Remark~50.(d)]{dm82} and \cite[Theorem~4.34]{hwy92}. Continuous adapted processes, Levy processes (\cite[Theorem 11.36]{hwy92}) and Feller processes (\cite[Proposition 22.20]{kal2}) are quasi left-continuous. 
A semimartingale $v$ is regular if and only if it is of class $\cD$ and has a decomposition $v=m+a$ where $m$ is a local martingale and $a$ is a continuous BV process. Indeed, a semimartingale of class $(D)$ is special so the claim follows from \cite[Remark VII.24(e)]{dm82}.
\end{remark}

We will denote by $\M^\infty\subseteq L^\infty(M)$ the space of essentially bounded {\em optional} Radon measures on $\reals^d$, i.e.\ the elements $\theta\in L^\infty(M)$ such that
\[
E\int v d\theta = E\int \op{v}d\theta\quad\forall v\in L^1(C).
\]
The following result, essentially proved already in Bismut~\cite{bis78}, shows that $\M^\infty$ may be identified with the Banach dual of $\R^1$.

\begin{theorem}\label{thm:banach}
The space $\R^1$ is a Banach space under the norm
\[
\|v\|_{\R^1} := \sup_{\tau\in\T}E|v_\tau|
\]
and its dual may be identified with $\M^\infty$ through the bilinear form
\[
\langle v,\theta\rangle_{\R^1} = E\int vd\theta.
\]
The dual norm can be expressed as 
\[
\|\theta\|_{\M^\infty} = \esssup\|\theta\|_{TV}.
\]
\end{theorem}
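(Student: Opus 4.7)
The plan is to realise $(\R^1,\|\cdot\|_{\R^1})$ as a Banach-space quotient of $L^1(C)$ via the optional projection $\pi:w\mapsto \op{w}$ and to read off the completeness, the dual identification, and the dual norm from Theorem~\ref{thm:banach0} by standard quotient--annihilator duality. Recall that $\pi$ is linear and, by \cite[Theorem~3]{bis78}, surjective onto $\R^1$.

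First I would verify $\|\cdot\|_{\R^1}$ is a norm. Finiteness is the class-$\cD$ uniform-integrability bound $\sup_\tau E|v_\tau|<\infty$, and positive-definiteness follows from the optional section theorem: $E|v_\tau|=0$ for every $\tau\in\T$ forces an optional process to be evanescent. The central step is then the isometric identification
\[
\|v\|_{\R^1}=\inf\bigl\{\|w\|_{L^1(C)}\midb w\in L^1(C),\;\op{w}=v\bigr\}.
\]
The inequality $\leq$ is immediate from the conditional Jensen inequality: for any $w$ with $\op{w}=v$ and any $\tau\in\T$,
\[
E|v_\tau|=E\bigl|E[w_\tau\mid\F_\tau]\bigr|\le E|w_\tau|\le E\sup_{t\in[0,T]}|w_t|=\|w\|_{L^1(C)}.
\]
The reverse inequality, which demands a continuous representative of $v$ whose $L^1(C)$-norm comes arbitrarily close to $\|v\|_{\R^1}$, is the main substantive point and is exactly Bismut's surjectivity-with-norm-estimate in \cite[Theorem~3]{bis78}. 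With both bounds in hand, $\R^1$ is isometric to the Banach quotient $L^1(C)/\ker\pi$ and therefore a Banach space in its own right.

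Classical duality then gives $(\R^1)^*\cong (\ker\pi)^\perp$ isometrically, where the annihilator lives in $L^1(C)^*=L^\infty(M)$ (by Theorem~\ref{thm:banach0}) and inherits the restricted $L^\infty(M)$-norm. It remains to identify this annihilator with $\M^\infty$. The inclusion $\M^\infty\subseteq(\ker\pi)^\perp$ is immediate from the defining identity $E\int w\,d\theta=E\int\op{w}\,d\theta$ applied to $w\in\ker\pi$. Conversely, if $\theta\in L^\infty(M)$ annihilates $\ker\pi$, then $w\mapsto E\int w\,d\theta$ depends on $w\in L^1(C)$ only through $\op{w}$, which is precisely what the defining optionality condition of $\M^\infty$ packages. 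Combined with the dual norm formula of Theorem~\ref{thm:banach0}, this gives $\|\theta\|_{\M^\infty}=\esssup\|\theta\|_{TV}$.

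The principal obstacle is the $\geq$ direction in the quotient-norm identification, i.e., producing a near-optimal continuous lift of an arbitrary regular process. This is the nontrivial content we import from Bismut; without it the norm $\sup_\tau E|v_\tau|$ could in principle be strictly smaller than the quotient norm and we would only obtain an equivalent-norm Banach structure rather than the isometric identification that drives the clean dual norm formula.
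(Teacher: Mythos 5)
Your overall route is the same as the paper's: view the optional projection $\pi:L^1(C)\to\R^1$ as a quotient map, get $\|\op{w}\|_{\R^1}\le\|w\|_{L^1(C)}$ from Jensen, and get the reverse (near-isometric lifting) from Bismut's Theorem~3, so that $\R^1$ is isometric to $L^1(C)/\ker\pi$ and hence Banach, with the dual norm then read off from Theorem~\ref{thm:banach0}. All of that matches the paper's proof and is correct.

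The gap is in your identification $(\ker\pi)^\perp=\M^\infty$. The inclusion $\M^\infty\subseteq(\ker\pi)^\perp$ is indeed immediate, but the converse is not, and your justification conflates two different statements. Annihilating $\ker\pi$ tells you that $w\mapsto E\int w\,d\theta$ \emph{factors through} $\op{w}$, i.e.\ $\op{w_1}=\op{w_2}$ implies $E\int w_1\,d\theta=E\int w_2\,d\theta$. The defining condition of $\M^\infty$ is the stronger identity $E\int w\,d\theta=E\int\op{w}\,d\theta$ for all $w\in L^1(C)$, which pins the common value down as the integral of the (merely c\'adl\'ag, generally non-continuous) process $\op{w}$ against $\theta$ itself. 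You cannot pass from the first statement to the second by applying the annihilation hypothesis to $w-\op{w}$, because $\op{w}\notin L^1(C)$; closing this step needs genuine input from the general theory (e.g.\ dual optional projections of random measures together with an optional section argument, or equivalently weak$^*$-closedness of $\M^\infty$ in $L^\infty(M)$ so that a bipolar argument applies). This is exactly the point the paper does not reprove but outsources to \cite[Proposition~2]{bis78}; as written, your argument treats the crux of the dual identification as definitional, so either supply that argument or cite Bismut's Proposition~2 as the paper does.
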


\begin{proof}
Since the optional projection is a surjection from $L^1(C)$ to $\R^1$, it defines a linear bijection from the quotient space $L^1(C)/K$ to $\R^1$. Here $K$ denotes the kernel of the projection. For any $v\in L^1(C)$, Jensen's inequality gives
\begin{equation}\label{eq:jensen}
\|\op{v}\|_{\R^1} = \sup_{\tau\in\T}E|E[v_\tau\,|\,\F_\tau]| \le \sup_{\tau\in\T}E|v_\tau| \le \|v\|_{L^1(C)}
\end{equation}
so the optional projection is continuous. In particular, $K$ is closed in $L^1(C)$ so $L^1(C)/K$ is a Banach space under the quotient space norm
\[
\|[v]\|_{L^1(C)/K}:=\inf_{v'\in K}\|v+v'\|_{L^1(C)}.
\]
On the other hand, for each $w\in\R^1$ and $\varepsilon>0$, \cite[Theorem~3]{bis78} gives the existence of a $v\in L^1(C)$ such that $w=\op{v}$ and $\|v\|_{L^1(C)}\le\|w\|_{\R^1}+\varepsilon$. Thus, $\|[v]\|_{L^1(C)/K}\le\|\op v\|_{\R^1}$ which together with \eqref{eq:jensen} implies that the optional projection is an isometric isomorphism from the quotient space $L^1(C)/K$ to $\R^1$. It follows that $\R^1$ is Banach and, by \cite[Proposition~2]{bis78}, its dual may be identified with $\M^\infty$. As to the dual norm,
\begin{align*}
\|\theta\|_{\M^\infty} &=\sup_{v\in\R^1}\{\langle v,\theta\rangle\,|\,\|v\|_{\R^1}\le 1\}\\
&=\sup_{v\in L^1(C)}\{\langle v,\theta\rangle\,|\,\|v\|_{L^1(C)}\le 1\},
\end{align*}
where the second equality comes from the isomorphism of $\R^1$ and $L^1(C)/K$.
\end{proof}

Theorem~\ref{thm:banach} complements the results of \cite[Section~7.1.4]{dm82} on Banach duals of adapted continuous functions and adapted c\'adl\'ag functions under the supremum norm. The dual space of adapted continuous functions consists of predictable random measures with essentially bounded variation whereas the dual of adapted c\'adl\'ag functions is given in terms of pairs of optional and predictable random measures with essentially bounded variation; see \cite[Theorem VII.67]{dm82}. In the deterministic case, Theorem~\ref{thm:banach} reduces to the familiar Riesz representation of continuous linear functionals on the space of continuous functions (the duality between $C$ and $M$).

The norm $\|\cdot\|_{\R^1}$ in Theorem~\ref{thm:banach} is studied in \cite[Section VI]{dm82}, where a general measurable process $v$ is said to be ``bounded in $L^1$'' if $\|v\|_{\R^1}<\infty$; see \cite[Definition VI.20]{dm82}. It is observed on p.~82--83 of \cite{dm82} that a sequence converging in the $\R^1$-norm has a subsequence that converges almost surely in the supremum norm. Moreover, by \cite[Theorem VI.22]{dm82}, the space of optional cadlag processes with finite $\R^1$-norm is Banach. Theorem~\ref{thm:banach} implies that regular processes form a closed subspace of this space.

\section{Integral functionals of regular processes}\label{sec:ifrp}

This section gives the main result of this paper. Given a normal integrand $h$ and a random measure $\mu$ as in Section~\ref{sec:ifni}, it characterizes the conjugate of a convex integral functional of the form
\[
EI_h(v) = E\int h(v)d\mu
\]
on the space $\R^1$ of regular processes. Note that $\R^1$ is not a decomposable space nor are the paths of a regular process continuous in general, so we are beyond the settings of Sections~\ref{sec:if} and \ref{sec:ifni}. Nevertheless, the functional $EI_h$ is well-defined on $\R^1$ since $h(v)$ is an extended real-valued measurable process for every $v\in\R^1$, so $I_h(v)$ is $\F$-measurable by Lemma~\ref{lem:op0} in the appendix.

Our main result, Theorem~\ref{thm:ifam2} below, requires some additional properties on $\mu$ and $h$. As to $\mu$, we assume that it is {\em optional}, i.e.\ that
\[
E\int vd\mu=E\int \op vd\mu
\]
for every nonnegative bounded process $v$. 
The normal integrand $h$ will be assumed ``regular'' in the sense of Definition~\ref{def:ri} below. The definition involves the notion of the optional projection of a normal integrand that we now recall; see \cite{kp16}.

A normal integrand $g$ on $\reals^d\times\Omega\times[0,T]$ is said to be {\em optional} if 
its epigraph $\epi g_t(\cdot,\omega)$ is measurable with respect to the optional sigma algebra on $\Omega\times[0,T]$. If $g$ is a convex normal integrand such that $g^*(v)^+$ is $\T$-integrable for some $\T$-integrable $v$ then, by \cite[Theorem~6]{kp16}, there exists a unique optional convex normal integrand $\op g$ such that
\begin{align}\label{eq:opni}
\op g(x)=\op[g(x)]
\end{align}
for every bounded optional process $x$. Here we use the notion of optional projection of an extended real-valued process; see the appendix. The normal integrand $\op g$ is called the {\em optional projection} of $g$. Clearly, an optional normal integrand is the optional projection of itself. In the linear case where $g(x,\omega)=v_t(\omega)\cdot x$ for a measurable $\T$-integrable process $v$, we simply have $\op g_t(x,\omega)=\op v_t(\omega)\cdot x$.

We will use the abbreviation a.s.e.\ for ``$P$-almost surely everywhere on $[0,T]$'', that is, outside an evanescent set. 
\begin{definition}\label{def:ri}
An optional convex normal integrand $h$ on $\reals^d$ is {\em regular} if $h^*=\op{\tilde h^*}$ for a convex normal integrand $\tilde h$ such that 
$\tilde h(\omega)$ satisfies, for $P$-almost every $\omega$, the conditions of Theorem~\ref{thm:if1} and
\begin{align*}
\tilde h(v) &\ge v\cdot \bar x-\alpha\ \text{ a.s.e.}\\
\tilde h^*(x) &\ge \bar v\cdot x-\alpha\ \text{ a.s.e.}
\end{align*}
for some $\bar v\in L^1(C)$ with $\bar v\in C(D)$ almost surely, optional $\bar x$ with $\int|\bar x|d\mu\in L^\infty$ and some $\T$-integrable $\alpha$ with $\int|\alpha|d\mu\in L^1$. 
\end{definition}

Before commenting on Definition~\ref{def:ri}, we state the main result of this paper, which characterizes the conjugate and the subdifferential of an integral functional on $\R^1$. Since $\R^1$ is not decomposable, we cannot directly apply the interchange rule in Theorem~\ref{thm:icr}. Instead, the idea is to apply the interchange rule to $I_{\tilde h}$ on $L^1(C)$ and to use properties of optional projections of normal integrands from \cite{kp16}. The proof is given in the appendix.

\begin{theorem}\label{thm:ifam2}
If $h$ is a regular convex normal integrand, then $EI_h+\delta_{\R^1(D)}:\R^1\rightarrow\ereals$ and $EJ_{h^*}:\M^\infty\rightarrow\ereals$ are proper and conjugate to each other and, moreover, $\theta\in\partial(EI_h+\delta_{\R^1(D)})(v)$ if and only if
\begin{align*}
d\theta^a/d\mu&\in\partial h(v)\quad \mu\text{-a.e.},\\
d\theta^s/d|\theta^s| &\in \partial^sh(v)\quad|\theta^s|\text{-a.e.}
\end{align*}
almost surely.
\end{theorem}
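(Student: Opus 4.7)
The strategy is to lift the problem from $\R^1$ to $L^1(C)$ via the isometric isomorphism $\R^1\cong L^1(C)/K$ furnished by Theorem~\ref{thm:banach} (with $K$ the kernel of the optional projection), apply Theorem~\ref{thm:1} to the auxiliary integrand $\tilde h$ on $L^1(C)$, and then transfer the resulting expression back to $h$ using the identity $h^*=\op{\tilde h^*}$ together with the commutation rules for optional projections of normal integrands from \cite{kp16}. The two analytic ingredients that will drive the argument are (i) a Jensen-type inequality $EI_h(\op w)\le EI_{\tilde h}(w)$ valid for every $w\in L^1(C)$, and (ii) the identity $EJ_{h^*}(\theta)=EJ_{\tilde h^*}(\theta)$ for every $\theta\in\M^\infty$.

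Properness I would dispatch first. Setting $v:=\op{\bar v}\in\R^1$, a conditional Jensen argument (using that $D$ is an optional closed convex set and $\bar v\in C(D)$ a.s.) puts $v$ in $\R^1(D)$. The integrability bound $\tilde h(\bar v)\ge\bar v\cdot\bar x-\alpha$ from Definition~\ref{def:ri}, combined with $\int|\bar x|d\mu\in L^\infty$ and $\int|\alpha|d\mu\in L^1$, provides an integrable lower bound for $\tilde h(\bar v)$, so that $EI_{\tilde h}(\bar v)<\infty$; ingredient (i) then yields $EI_h(v)<\infty$. For $EJ_{h^*}$ the choice $\theta:=\bar xd\mu\in\M^\infty$ has $\theta^s=0$ and $d\theta^a/d\mu=\bar x$, and the second integrability bound of Definition~\ref{def:ri}, together with $h^*(\bar x)=\tilde h^*(\bar x)$ on the optional $\bar x$ (via \eqref{eq:opni}), gives $EJ_{h^*}(\theta)<\infty$.

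The conjugate identity I would prove through two inequalities. The direction $(EI_h+\delta_{\R^1(D)})^*(\theta)\le EJ_{h^*}(\theta)$ follows from integrating the pointwise Fenchel inequalities
\[
v_t\cdot(d\theta^a/d\mu)_t\le h_t(v_t)+h_t^*((d\theta^a/d\mu)_t),\qquad v_t\cdot(d\theta^s/d|\theta^s|)_t\le(h_t^*)^\infty((d\theta^s/d|\theta^s|)_t),
\]
the second one valid on $\R^1(D)$ because $v_t\in D_t$ forces $(h_t^*)^\infty=\sigma_{D_t}$ to dominate the inner product. For the reverse inequality I pick any $w\in L^1(C)\cap C(\tilde D)$, where $\tilde D:=\cl\dom\tilde h$, and combine three facts: $\op w\in\R^1(D)$ (a conditional Jensen together with the way $h^*=\op{\tilde h^*}$ links the domains), $\langle\op w,\theta\rangle=\langle w,\theta\rangle$ by the optionality of $\theta$, and $EI_h(\op w)\le EI_{\tilde h}(w)$ by (i). These yield
\[
\langle\op w,\theta\rangle-(EI_h+\delta_{\R^1(D)})(\op w)\ge\langle w,\theta\rangle-(EI_{\tilde h}+\delta_{C(\tilde D)})(w),
\]
so taking the supremum over $w$ and invoking Theorem~\ref{thm:1} applied to $\tilde h$ gives $(EI_h+\delta_{\R^1(D)})^*(\theta)\ge EJ_{\tilde h^*}(\theta)$, which equals $EJ_{h^*}(\theta)$ by (ii). The subdifferential characterization is then standard: $\theta\in\partial(EI_h+\delta_{\R^1(D)})(v)$ iff the Fenchel equality $\langle v,\theta\rangle=(EI_h+\delta_{\R^1(D)})(v)+EJ_{h^*}(\theta)$ holds, which in view of the two pointwise inequalities above forces equality almost everywhere and produces the stated $\mu$-a.e.\ and $|\theta^s|$-a.e.\ subgradient conditions.

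The main obstacles will be ingredients (i) and (ii), both of which rest on the theory of optional projections of normal integrands in \cite{kp16}. For (i), I would derive $h_t(\op{w_t})\le E[\tilde h_t(w_t)\mid\F_t]$ by taking the supremum over $\F_t$-measurable $x$ in the Fenchel inequality $\tilde h_t(w_t)\ge w_t\cdot x-\tilde h^*_t(x)$, conditioning, using $\op{\tilde h^*}=h^*$, and then promoting the pointwise supremum to $h_t(\op{w_t})$ through a measurable selection argument. For (ii), the absolutely continuous part is immediate because $d\theta^a/d\mu$ is optional, so \eqref{eq:opni} gives $E\int\tilde h^*(d\theta^a/d\mu)d\mu=E\int h^*(d\theta^a/d\mu)d\mu$; the delicate point is the singular part, where I need the recession functions $(\tilde h^*)^\infty$ and $(h^*)^\infty$ to agree when integrated against $d|\theta^s|$ along the optional Radon--Nikodym density, which I would obtain by combining \eqref{recsup} with a commutation result for optional projection of recession integrands from \cite{kp16}.
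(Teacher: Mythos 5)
Your overall architecture is the one the paper uses: the Fenchel inequalities give $(EI_h+\delta_{\R^1(D)})^*\le EJ_{h^*}$, and for the reverse inequality one replaces the supremum over $\R^1$ by a supremum over $L^1(C)$ (surjectivity of the optional projection plus optionality of $\theta$), invokes the comparison $EI_h(\op{w})\le EI_{\tilde h}(w)$, applies Theorem~\ref{thm:1} to $\tilde h$, and uses $EJ_{\tilde h^*}=EJ_{h^*}$. However, both of your ``ingredients'' conceal the actual technical content and, as you state them, they do not go through for general $\bar x$. First, \eqref{eq:opni} defines $\op{(\tilde h^*)}$ only through \emph{bounded} optional test processes, so the absolutely continuous part of (ii) is not ``immediate'': $d\theta^a/d\mu$ is unbounded, and the paper needs the perspective-integrand device of Lemma~\ref{lem:iop} (splitting on $\{|x|\ge M\}$, rescaling with the bounded processes $\alpha=\one_A x/|x|$, $\lambda=\one_A|x|^{-1}$ and the measure $|x|d\mu$) together with Lemma~\ref{lem:op} to obtain $E\int\tilde h^*(d\theta^a/d\mu)d\mu=E\int h^*(d\theta^a/d\mu)d\mu$; the singular part then also needs $\op{[(\tilde h^*)^\infty]}=[\op{(\tilde h^*)}]^\infty$ from \cite{kp16}. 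Second, your route to (i) via conditioning the Fenchel inequality requires the relevant negative parts to be integrable: the only available bound is $\tilde h(w)^-\le|w||\bar x|+|\alpha|$, and $\int|\bar x|d\mu\in L^\infty$ does not make $\bar x_\tau$ integrable at stopping times, so $\op{[\tilde h(w)]}$ (or your conditional expectations) need not be well defined. The paper avoids exactly this by first proving the reverse inequality under the normalization $\bar x=0$ (then $\tilde h\ge-\alpha$ and $\tilde h^*(0)^+\le\alpha^+$ are $\T$-integrable, so \cite[Theorem~9]{kp16} yields $h(\op{y})\le\op{[\tilde h(y)]}$ and $\delta_D(\op{y})\le\op{\delta_{\cl\dom\tilde h}(y)}$) and then reducing the general case by the translation $\bar h_t(v)=h_t(v)-\bar x_t\cdot v$, $\theta\mapsto\theta-\bar\theta$, checking that $\bar h$ is again regular. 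Your proposal contains no such reduction, and without it the Jensen-type step is unjustified.

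There is a second, independent gap: the theorem claims the two functionals are conjugate \emph{to each other}, i.e.\ also $(EJ_{h^*})^*=EI_h+\delta_{\R^1(D)}$ on $\R^1$. Your argument only establishes $(EI_h+\delta_{\R^1(D)})^*=EJ_{h^*}$; the converse requires showing that $EI_h+\delta_{\R^1(D)}$ is lower semicontinuous (hence equal to its biconjugate). The paper does this in a final step: convergence in the $\R^1$-norm yields a subsequence converging almost surely in the supremum norm, and Fatou's lemma applied to the integrand minorized via the Fenchel inequality \eqref{eq:fen} gives lower semicontinuity. This step is missing from your proposal. Your properness argument and the derivation of the subdifferential characterization from the Fenchel equalities are fine and match the paper once the conjugacy identity is secured.
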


In the deterministic case, Theorem~\ref{thm:ifam2} gives sufficiency for Theorem~\ref{thm:if1}. Indeed, we then have $\R^1=C$ and one can simply take $\tilde h=h$ in Definition~\ref{def:ri}. Note that, in general, the assumptions in the Theorem~\ref{thm:ifam2} do not imply that $J_{h^*}$ is a normal integrand on $ M\times\Omega$. For example, in the linear case $h^*_t(x,\omega)=v_t(\omega)\cdot x$, the function $J_{h^*}(\cdot,\omega)$ is $\sigma(M,C)$-lower semicontinuous on $M$ if and only if $v$ has continuous paths (since $C$ is the topological dual of $M$ under the weak* topology $\sigma(M,C)$). 

Theorem~\ref{thm:ifam2} simplifies when $h$ is real-valued.

\begin{corollary}
Let $h$ be a real-valued optional convex normal integrand such that $EI_h$ is finite on $L^1(C)$ and
\begin{align*}
h(v) &\ge v\cdot \bar x-\alpha,\\
h^*(x) &\ge \bar v\cdot x-\alpha
\end{align*}
for some $\bar v\in L^1(C)$, optional $\bar x$ with $\int|\bar x|d\mu\in L^\infty$ and some $\T$-integrable $\alpha$ with $\int|\alpha|d\mu\in L^1$. Then $EI_h:\R^1\rightarrow\ereals$ and $EJ_{h^*}:\M^\infty\rightarrow\ereals$ are proper and conjugate to each other and, moreover, $\theta\in\partial EI_h(v)$ if and only if $\theta\ll \mu$
\begin{align*}
d\theta^a/d\mu&\in\partial h(v)\quad \mu\text{-a.e.}
\end{align*}
almost surely. Moreover, $EI_h$ is continuous throughout $\R^1$.
\end{corollary}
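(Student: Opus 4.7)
The plan is to apply Theorem~\ref{thm:ifam2} with $\tilde h = h$ and then promote the resulting lower semicontinuity to continuity by a Banach space argument.

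First I would verify the hypotheses of Definition~\ref{def:ri}. Since $h$ is optional, so is $h^*$, giving $h^* = \op{h^*}$ trivially. The required lower bounds on $h$ and $h^*$ are assumed, and the inclusion $\bar v \in L^1(C(D))$ is automatic because $D_t = \reals^d$. For the condition that $h(\omega)$ satisfy, for $P$-almost every $\omega$, the hypotheses of Theorem~\ref{thm:if1}: the inner semicontinuity of $\dom h \equiv \reals^d$ is trivial, while the density condition $C(D) = \cl(\dom I_h \cap C(D))$ reduces to $C = \cl\dom I_h(\cdot,\omega)$ almost surely. This follows from the finiteness of $EI_h$ on $L^1(C)$ by applying it to each element of a countable dense subset of the separable space $C$ and intersecting the resulting almost sure sets.

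Theorem~\ref{thm:ifam2} then yields that $EI_h$ (since $\R^1(D) = \R^1$) and $EJ_{h^*}$ are proper conjugates, with $\theta \in \partial EI_h(v)$ characterized by the two subdifferential relations. For real-valued $h$, $\partial^s h_t = \{0\}$, so the singular relation forces $d\theta^s/d|\theta^s| = 0$ almost everywhere with respect to $|\theta^s|$. Since this Radon--Nikodym derivative has unit Euclidean norm $|\theta^s|$-almost everywhere by the polar decomposition, we conclude $|\theta^s| = 0$, i.e., $\theta \ll \mu$, leaving only the absolutely continuous relation.

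The continuity claim is the main obstacle. Because $EI_h$ is lower semicontinuous on the Banach space $\R^1$ as the conjugate of the proper $EJ_{h^*}$, a standard result from convex analysis ensures continuity on the interior of its effective domain; it therefore suffices to prove $EI_h(v) < +\infty$ for every $v \in \R^1$. Given such $v$, use \cite[Theorem~3]{bis78} to write $v = \op{\tilde v}$ for some $\tilde v \in L^1(C)$. Since $h$ is an optional convex integrand, Jensen's inequality applied at each stopping time (with $h_\tau(\cdot,\omega)$ as the convex function and $\tilde v_\tau$ as the random variable) together with the optional section theorem yields $h(v) \le \op{[h(\tilde v)]}$ almost surely everywhere; integrating against the optional random measure $\mu$ and invoking its defining property then gives $EI_h(v) \le EI_h(\tilde v) < +\infty$. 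Combined with the lower bound $EI_h(v) > -\infty$ coming from $h(v) \ge v \cdot \bar x - \alpha$ and the integrability assumptions on $\bar x$ and $\alpha$, we obtain $\dom EI_h = \R^1$, and hence continuity throughout $\R^1$.
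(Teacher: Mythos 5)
Your proposal is correct and follows essentially the same route as the paper: take $\tilde h=h$ in Definition~\ref{def:ri} (with the density condition $C=\cl\dom I_h(\cdot,\omega)$ supplied by finiteness of $EI_h$ on $L^1(C)$ and a countable dense set), invoke Theorem~\ref{thm:ifam2} plus $\partial^sh(v)=\{0\}$ to get $\theta\ll\mu$, and deduce continuity from finiteness of $EI_h$ on all of $\R^1$ together with lower semicontinuity on the Banach space $\R^1$ (the paper cites \cite[Corollary~8B]{roc74}). The only divergence is that where the paper quotes $EI_h(\op{v})\le EI_h(v)$ from the proof of Theorem~\ref{thm:ifam2} (which rests on \cite[Theorem~9]{kp16} after translating $h$ by $\bar x$ so the integrand is bounded below by the $\T$-integrable $\alpha$), you re-derive it by conditional Jensen at stopping times and the optional section theorem; this is exactly the mechanism behind the cited result, but to make $\op{[h(\tilde v)]}$ and the conditional Jensen step legitimate you need the same reduction to $\bar x=0$, since $\int|\bar x|\,d\mu\in L^\infty$ alone does not give the $\T$-integrability of $h(\tilde v)^-$ that your sketch implicitly uses.
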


\begin{proof}
Our assumptions imply that $I_h$ is finite on $C$ almost surely. Thus we may choose $\tilde h =h$ in Definition~\ref{def:ri}, so $h$ is regular. The first part thus follows from Theorem~\ref{thm:ifam2} and the fact that $\partial^sh(v)=\{0\}$ for a finite $h$. It remains to show that  $EI_h$ is continuous. As in the proof of Theorem~\ref{thm:ifam2} in the appendix, we see that $EI_h(\op v)\le EI_h(v)$ for every $v\in L^1(C)$. Thus, $EI_h$ is finite on $\R^1$, so the continuity follows from \cite[Corollary 8B]{roc74} since $\R^1$ is Banach.
\end{proof}

We say that a measurable closed convex-valued mapping $S$ is {\em regular} if its indicator function $\delta_S$ is regular in the sense of Definition~\ref{def:ri}. In particular, a convex-valued isc optional mapping $S$ that admits an $L^1(C)$ selection is regular since then one can take $\tilde h=\delta_S$ in Definition~\ref{def:ri}. When $h=\delta_S$, Theorem~\ref{thm:ifam2} can be stated in terms of the normal integrand defined pointwise by $\sigma_{S_t}(x,\omega):=\sup_{v\in S_t(\omega)}x\cdot v$.




\begin{corollary}\label{cor:ind}
Let $S$ be a regular set-valued mapping. Then 
\[
\S = \{v\in\R^1\,|\,v\in S\ a.s.e.\}
\]
is closed in $\R^1$,
\[
\sigma_{\S}(\theta) = E\int \sigma_{S}(d\theta/d|\theta|)d|\theta|
\]
and $\theta\in N_{\S}(v)$ if and only if $d\theta/d|\theta|\in N_S(v)$ $|\theta|$-a.e.\ almost surely. In particular, if $S$ is cone-valued, then $\S$ is a closed convex cone and $\theta\in\S^*$ if and only if $d\theta/d|\theta|\in S^*$ $|\theta|$-almost everywhere $P$-almost surely.
\end{corollary}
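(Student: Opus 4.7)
The plan is to apply Theorem~\ref{thm:ifam2} with the integrand $h=\delta_S$. Since $S$ is regular by hypothesis, $\delta_S$ is a regular convex normal integrand in the sense of Definition~\ref{def:ri}. With this choice, $\dom h_t=S_t$ and, as $S$ is closed-valued, $D_t=\cl\dom h_t=S_t$. Therefore $EI_h(v)=0$ when $v\in S$ $\mu$-a.e.\ and $+\infty$ otherwise, while $\delta_{\R^1(D)}=\delta_\S$; adding the two yields $EI_h+\delta_{\R^1(D)}=\delta_\S$. Theorem~\ref{thm:ifam2} then delivers properness and the conjugate relation $\sigma_\S=(\delta_\S)^*=EJ_{h^*}$. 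Being a conjugate, $\delta_\S$ is in particular $\|\cdot\|_{\R^1}$-lower semicontinuous, so $\S$ is closed in $\R^1$.

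Next I would simplify $J_{h^*}$. We have $h^*_t(\cdot,\omega)=\sigma_{S_t(\omega)}$, which is positively homogeneous, so $(h^*)^\infty=h^*$. Writing $d\theta^a/d\mu=(d\theta^a/d|\theta^a|)\cdot(d|\theta^a|/d\mu)$ and using positive homogeneity of $\sigma_S$,
\[
\int\sigma_S(d\theta^a/d\mu)\,d\mu=\int\sigma_S(d\theta^a/d|\theta^a|)\,d|\theta^a|,
\]
and similarly for the singular part. Since $|\theta|=|\theta^a|+|\theta^s|$ and $d\theta/d|\theta|$ agrees $|\theta|$-a.e.\ with $d\theta^a/d|\theta^a|$ on $\supp\theta^a$ and with $d\theta^s/d|\theta^s|$ on $\supp\theta^s$, the two terms of $J_{h^*}(\theta)$ combine into $\int\sigma_S(d\theta/d|\theta|)\,d|\theta|$. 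Taking expectations yields the claimed support function formula.

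For the subdifferential characterization, $\partial\delta_{S_t}(v)=N_{S_t}(v)=\partial^s\delta_{S_t}(v)$, so both subgradient conditions in Theorem~\ref{thm:ifam2} reduce to the same inclusion in $N_S(v)$. Because $N_S(v)$ is a cone, the positive rescalings $d|\theta^a|/d\mu$ and the analogous factor for the singular part preserve membership, so the conjunction of the two conditions is equivalent to the single statement $d\theta/d|\theta|\in N_S(v)$ $|\theta|$-a.e.\ almost surely.

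In the cone case, $\S$ is closed under positive scaling and, by pointwise convexity and the cone property of $S$, under addition; combined with closedness this shows $\S$ is a closed convex cone. The support function formula becomes $\sigma_\S(\theta)=E\int\delta_{S^*}(d\theta/d|\theta|)\,d|\theta|$, which equals $0$ precisely when $d\theta/d|\theta|\in S^*$ $|\theta|$-a.e.\ almost surely and $+\infty$ otherwise; thus $\sigma_\S=\delta_{\S^*}$ with $\S^*$ as stated. The only non-routine step is the polar-decomposition simplification of $J_{h^*}$; beyond that, the corollary is a direct specialization of Theorem~\ref{thm:ifam2}, and I do not anticipate genuine obstacles.
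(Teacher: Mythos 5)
Your proposal is correct and follows exactly the route the paper intends: the paper offers no separate proof of Corollary~\ref{cor:ind}, treating it as the specialization of Theorem~\ref{thm:ifam2} to $h=\delta_S$ (so that $D=S$, $EI_h+\delta_{\R^1(D)}=\delta_{\S}$, $h^*=(h^*)^\infty=\sigma_S$), which is precisely your argument. Your filling-in of the routine steps—the polar/Lebesgue decomposition rewriting of $J_{h^*}$ via positive homogeneity, the collapse of the two subgradient conditions into one normal-cone condition using that $N_S(v)$ is a cone, and the cone case via $\sigma_S=\delta_{S^*}$—is accurate.
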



Regular set-valued mappings can be characterized in terms of optional projections of set-valued mappings. If $S$ admits a $\T$-integrable a.s.e.\ selector then, by \cite[Theorem~10]{kp16}, there is a unique optional closed convex-valued mapping $\op S$ such that $\sigma_{\op S}=\op\sigma_S$. The mapping $\op S$ is called the {\em optional projection} of $S$. 
Note that if $S_t(\omega)=\{v_t(\omega)\}$ for a $\T$-integrable process $v$, we simply have $\op S_t(\omega)=\{\op v_t(\omega)\}$. 


\begin{lemma}\label{lem:rsm}
A closed convex-valued measurable mapping $S$ is regular if and only if it is the optional projection of a closed convex-valued measurable isc mapping $\tilde S$ that admits an $L^1(C)$ a.s.e.\ selection. In particular, a single-valued mapping $S_t(\omega)=\{v_t(\omega)\}$ is regular if and only if $v$ is a regular process.
\end{lemma}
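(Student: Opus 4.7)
The plan is to exploit the equivalence $S=\op{\tilde S}\Leftrightarrow\sigma_S=\op{\sigma_{\tilde S}}$, which follows from the defining identity $\sigma_{\op{\tilde S}}=\op{\sigma_{\tilde S}}$ of \cite[Theorem~10]{kp16} together with the uniqueness of support functions of closed convex sets. Accordingly, in the sufficiency direction I will take $\tilde h=\delta_{\tilde S}$ in Definition~\ref{def:ri}, and in the necessity direction I will take $\tilde S=\cl\dom\tilde h$ for the $\tilde h$ supplied by the definition.

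For the sufficiency, suppose $S=\op{\tilde S}$ with $\tilde S$ inner semicontinuous, closed convex-valued, measurable, and admitting an a.s.e.\ selection $\bar v\in L^1(C)$. Setting $\tilde h:=\delta_{\tilde S}$, I would compute
\[
\op{\tilde h^*}=\op{\sigma_{\tilde S}}=\sigma_{\op{\tilde S}}=\sigma_S=h^*.
\]
The conditions of Theorem~\ref{thm:if1} hold because $\dom\tilde h=\tilde S$ is isc and the density condition $C(D)=\cl(\dom I_{\tilde h}\cap C(D))$ is automatic for indicator integrands, as already noted below Theorem~\ref{thm:if1}. The growth inequalities of Definition~\ref{def:ri} are then trivial, with $\bar x=0$ and $\alpha=0$: $\tilde h\ge 0$, and $\sigma_{\tilde S}(x)\ge\bar v\cdot x$ a.s.e.\ since $\bar v$ is a selection of $\tilde S$. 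Thus $\delta_S$ is regular.

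For the necessity, I would set $\tilde S:=\cl\dom\tilde h$. Measurability and closed convex-valuedness are immediate from $\tilde h$ being a convex normal integrand; inner semicontinuity of $\tilde S$ follows from that of $\dom\tilde h$ (required by Theorem~\ref{thm:if1}) because $\cl A\cap O\ne\emptyset$ iff $A\cap O\ne\emptyset$ for any open $O$. The required $L^1(C)$ a.s.e.\ selection is the $\bar v\in C(D)$ from Definition~\ref{def:ri}, as $D=\tilde S$. The key step is $\sigma_S=\op{\sigma_{\tilde S}}$, which I plan to obtain by chaining three identities: (a) $\tilde h^{*\infty}=\sigma_{\tilde S}$ via~\eqref{recsup} applied to $\tilde h^*$; (b) $(\op{\tilde h^*})^\infty=\op{\tilde h^*}=\sigma_S$, since $\sigma_S$ is positively homogeneous and hence its own recession function; and (c) the commutation $(\op{\tilde h^*})^\infty=\op{\tilde h^{*\infty}}$. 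Identity (c) is the main obstacle; the plan is to mimic the recession calculation at the end of the proof of Theorem~\ref{thm:cif}, applying monotone convergence to the nondecreasing difference quotient $[\tilde h^*(\bar x+\lambda x)-\tilde h^*(\bar x)]/\lambda\nearrow\tilde h^{*\infty}(x)$ evaluated along bounded optional $x$, with the growth bounds of Definition~\ref{def:ri} providing the integrable minorant needed to commute the optional projection with the limit.

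The single-valued refinement then follows from the characterization. If $v$ is regular, \cite[Theorem~3]{bis78} yields $\tilde v\in L^1(C)$ with $\op{\tilde v}=v$, and $\tilde S:=\{\tilde v\}$ is trivially isc with $L^1(C)$ selection $\tilde v$; the equality $\op{\tilde S}=\{v\}$ is a one-line support-function check. Conversely, if $\{v\}$ is regular and $\tilde S$, $\bar v\in L^1(C)$ are as in the lemma just proved, then pushing the pathwise inequalities $\bar v\cdot x\le\sigma_{\tilde S}(x)$ through the optional projection for each $x\in\reals^d$ (using separability of $\reals^d$ to collect exceptional sets) gives $\op{\bar v}\cdot x\le\sigma_{\op{\tilde S}}(x)=v\cdot x$ for all $x\in\reals^d$, hence $v=\op{\bar v}$, which displays $v$ as the optional projection of a process in $L^1(C)$, i.e.\ as a regular process.
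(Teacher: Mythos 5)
Your overall route is the same as the paper's: sufficiency by taking $\tilde h=\delta_{\tilde S}$ in Definition~\ref{def:ri}, necessity by setting $\tilde S=\cl\dom\tilde h$ and chaining $(\tilde h^*)^\infty=\sigma_{\cl\dom\tilde h}$ (via \eqref{recsup}) with the positive homogeneity of $\sigma_S$, and the single-valued case via \cite[Theorem~3]{bis78} together with pushing $\bar v\cdot x\le\sigma_{\tilde S}(x)$ through the projection to get $v=\op{\bar v}$. The one point where you depart from the paper is your step (c), the commutation $[\op{(\tilde h^*)}]^\infty=\op{[(\tilde h^*)^\infty]}$: the paper does not prove this but invokes \cite[Theorem~7]{kp16}, whose hypotheses ($\T$-integrability of $\tilde h(\bar v)^+$ and $\tilde h^*(\bar x)^+$) follow directly from the two lower bounds in Definition~\ref{def:ri}.

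Your proposed inline replacement for (c) is where the gap lies. First, the defining property \eqref{eq:opni} identifies $\op{(\tilde h^*)}(x)$ with $\op{[\tilde h^*(x)]}$ only for \emph{bounded} optional $x$, whereas your base point $\bar x$ from Definition~\ref{def:ri} satisfies only $\int|\bar x|d\mu\in L^\infty$ and need not be bounded; so the identification $\op{[\tilde h^*(\bar x+\lambda x)]}=\op{(\tilde h^*)}(\bar x+\lambda x)$ already requires a truncation argument of the kind carried out in Lemma~\ref{lem:iop}, or a bounded optional base point in $\dom\tilde h^*$, which the definition does not provide. Second, the integrable minorant you want for interchanging the projection with the increasing limit of difference quotients is not actually supplied by the bounds: the natural minorant at $\lambda=1$ is $\tilde h^*(\bar x+x)-\tilde h^*(\bar x)\ge\bar v\cdot(\bar x+x)-2\alpha$, and $\bar v\cdot\bar x$ need not be $\T$-integrable ($\bar v\in L^1(C)$ and $\int|\bar x|d\mu\in L^\infty$ do not control $\bar v_\tau\cdot\bar x_\tau$ at stopping times). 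Finally, even after obtaining the pointwise identities along bounded optional processes, you must still appeal to the uniqueness statement of \cite[Theorem~6]{kp16} to conclude that the two optional normal integrands coincide. None of this makes the approach wrong—the statement you are trying to establish is exactly \cite[Theorem~7]{kp16}, and citing it (as the paper does) closes the argument—but as written your step (c) is incomplete, while the remainder of your proof matches the paper's.
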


\begin{proof}
If such an $\tilde S$ exists, one can take $\tilde h=\delta_{\tilde S}$ in Definition~\ref{def:ri}. To prove the necessity, let $\tilde h$ be a convex normal integrand in Definition~\ref{def:ri} so that $\op {\tilde h^*}=\sigma_S$. By \cite[Theorem 7]{kp16}, $\op[(\tilde h^*)^\infty]=[\op (\tilde h^*)]^\infty$. Since $\sigma_S$ is positively homogeneous, we get $\op[(\tilde h^*)^\infty]=\sigma_S$. On the other hand, by \eqref{recsup}, $(\tilde h^*)^\infty=\sigma_{\cl\dom {\tilde h}}$, so we may choose $\tilde S=\cl\dom {\tilde h}$.

Consider now the single-valued case and let $\tilde v\in L^1(C)$ be a selection of $\tilde S$. We have $\sigma_{\tilde S_t(\omega)}(x)\ge x\cdot\tilde v_t(\omega)$, so $x\cdot v_t(\omega)=\op\sigma_{\tilde S_t(\omega)}(x)\ge x\cdot\op{\tilde v}_t(\omega)$. Since this holds for all $x\in\reals^d$, we must have $v=\op{\tilde v}$.
\end{proof}

\begin{example}
A set-valued mapping is regular if it is a "martingale" in the sense that is the projection of a pathwise constant set-valued mapping that admits an $L^1(C)$ a.s.e. selection. Set-valued martingales in discrete time have been analyzed, e.g., in \cite{hu77,he2}.
\end{example}

\begin{example}
Sets of the form $\S^*$ in the last part of Corollary~\ref{cor:ind} are used to describe financial markets in \cite[Section 3.6.3]{ks9}, where it is assumed that
\[
S_t(\omega):=\co\cone\{\zeta^k_t(\omega)\mid k\in\naturals\}
\]
for a countable family $(\zeta^k)_{k\in\naturals}$ of adapted continuous processes such that for each $\omega$ and $t$ only a finite number of the vectors $\zeta^k_t(\omega)$ is nonzero. Such an $S$ is automatically optional and isc and thus regular. Indeed, given a family of isc mappings $(\Gamma_\alpha)$ their pointwise union is isc since $(\bigcup_{\alpha}\Gamma_\alpha)^{-1}(O)=\bigcup_\alpha((\Gamma_\alpha)^{-1}(O))$ is open for any open $O$. Thus, $S$ is isc and, by \cite[Proposition 14.11 and Exercise 14.12]{rw98}, it is also optional (even predictable).

It is clear from the above argument that the assumption, that only a finite number of the generators is nonzero, is not needed for regularity. Indeed, the mapping
\[
S_t(\omega):=\cl\co\cone\{\zeta^k_t(\omega)\mid k\in\naturals\}
\]
is still regular. More generally, $S$ is regular if is the optional projection of 
\[
\tilde S_t(\omega):=\cl\co\cone\{\tilde \zeta^k_t(\omega)\mid k\in\naturals\}
\]
for a countable family $(\zeta^k)_{k\in\naturals}$ of (non-adapted) continuous processes. Corollary~\ref{cor:3} gives an extension to nonconical models. In discrete time, such models have been studied in \cite{pp10}.
\end{example}

Applying Theorem~\ref{thm:ifam2} to the case where $h$ is support function of a closed convex-valued mapping gives the following.

\begin{corollary}\label{cor:3}
Let $S$ be an optional closed convex-valued mapping such that $\sigma_S$ is regular. Then
\[
\C=\{\theta\in\M^\infty\mid d\theta^a/d\mu\in S\ \mu\text{-a.e.},\ d\theta^s/d|\theta^s|\in S^\infty\ |\theta^s|\text{-a.e.\ $P$-a.s.}\}
\]
is closed in $\M^\infty$ and its support function has the representation
\[
\sigma_\C(v) = E\int\sigma_S(v)d\mu + \delta_{\R^1(D)},
\]
where $D_t(\omega)=\cl\dom\sigma_{S_t(\omega)}$.
\end{corollary}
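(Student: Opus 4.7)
The strategy is to apply Theorem~\ref{thm:ifam2} to the integrand $h:=\sigma_S$. Since $S$ is optional, $\sigma_S$ is an optional convex normal integrand, and it is regular in the sense of Definition~\ref{def:ri} by hypothesis, so all requirements of Theorem~\ref{thm:ifam2} are met. The rest is bookkeeping: I need to identify $EJ_{h^*}$ with the indicator $\delta_\C$, and then extract the support function of $\C$ from biconjugacy.

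For the pointwise identification, since $S_t(\omega)$ is closed convex, the conjugate of $\sigma_{S_t(\omega)}$ is $\delta_{S_t(\omega)}$, and the recession function of $\delta_{S_t(\omega)}$ is $\delta_{S_t(\omega)^\infty}$. Substituting these into the pathwise definition of $J_{h^*}$ from Section~\ref{sec:cont} yields
\[
J_{h^*}(\theta,\omega) = \int \delta_{S_t(\omega)}(d\theta^a/d\mu)\,d\mu_t(\omega) + \int \delta_{S_t(\omega)^\infty}(d\theta^s/d|\theta^s|)\,d|\theta^s|_t,
\]
which equals $0$ precisely when the two membership conditions defining $\C$ hold, and $+\infty$ otherwise. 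Hence $EJ_{h^*} = \delta_\C$.

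Applying Theorem~\ref{thm:ifam2} gives that $EI_h + \delta_{\R^1(D)}$ and $EJ_{h^*} = \delta_\C$ are proper closed convex conjugates of each other, with $D_t(\omega) = \cl\dom\sigma_{S_t(\omega)}$ by construction of $h$. The biconjugate theorem then yields
\[
\sigma_\C = (\delta_\C)^* = (EJ_{h^*})^* = EI_h + \delta_{\R^1(D)} = E\int\sigma_S(v)\,d\mu + \delta_{\R^1(D)},
\]
which is the claimed representation. Closedness of $\C$ in $\M^\infty$ is then immediate: $\delta_\C$ is a Fenchel conjugate, hence $\sigma(\M^\infty,\R^1)$-lower semicontinuous and therefore norm-lower semicontinuous, so $\C = \{\delta_\C \le 0\}$ is closed in the Banach space $\M^\infty$.

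Since all the heavy lifting is already done by Theorem~\ref{thm:ifam2}, there is essentially no analytic obstacle left; the only checks required are the routine identifications $(\sigma_{S})^* = \delta_S$ and $(\delta_S)^\infty = \delta_{S^\infty}$, together with observing that the propriety asserted by Theorem~\ref{thm:ifam2} forces $\C$ to be nonempty so that $\delta_\C$ is proper.
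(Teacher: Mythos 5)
Your proposal is correct and follows the paper's intended route: the paper obtains Corollary~\ref{cor:3} precisely by applying Theorem~\ref{thm:ifam2} with $h=\sigma_S$, using $(\sigma_S)^*=\delta_S$ and $(\delta_S)^\infty=\delta_{S^\infty}$ so that $EJ_{h^*}=\delta_\C$, and then reading off the support function and closedness from the conjugacy. Your bookkeeping (propriety of $\delta_\C$, weak$^*$ lower semicontinuity giving norm closedness) matches what the paper leaves implicit.
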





\section{Maximum principle in singular stochastic control}\label{sec:app}

The space $M$ of Radon measures may be identified with the space $X_0$ of $\reals^d$-valued left-continuous functions of bounded variation on $\reals_+$ which are constant on $(T,\infty]$ and $x_0=0$. Indeed, for every $x\in X_0$, there exists a unique $Dx\in M$ such that $x_t=Dx([0,t))$ for all $t\in \reals$.   Thus $x\mapsto Dx$ defines a linear isomorphism between $X_0$ and $M$. The value of $x$ for $t>T$ will be denoted by $x_{T+}$. Similarly, the space $\M^\infty$  may be identified with the space $\N_0^\infty$ of adapted processes $x$ with $x\in X_0$ almost surely and $Dx\in\M^\infty$. 

Let $g$ and $h$ be optional normal integrands on $\reals^d$ and consider the stochastic control problem
\begin{equation*}
\begin{aligned}
&\minimize_{c \in\L^\infty}\quad & & E[I_g(x) + e(x_T) + I_{h^*}(u)]\\
&\st & &
x_t=\int_0^tAxd\mu + \int_0^tBud\mu + W_t,
\end{aligned}
\end{equation*}
where $A,B\in\reals^{d\times d}$ and $W\in L^1(C)$ is optional. Here and in what follows, $\L^p$ denotes the optional elements of the space $L^p(\Omega\times[0,T],\F\otimes\B([0,T]),\nu;\reals^d)$, where $\nu(A):=E\mu(A)$. Denoting $z_t:=\int_0^txd\mu$ and $c_t:=\int_0^tud\mu$, we can write the problem as
\begin{equation*}
\begin{aligned}
&\minimize_{c \in\A_0^\infty}\quad &  E[&I_g(\dot z) + e(\dot z_T) + I_{h^*}(\dot c)]\\
&\st & &
\begin{cases}
z_0=0,\\
\dot z_t = Az_t + Bc_t + W_t,
\end{cases}
\end{aligned}
\end{equation*}
where $\A^\infty_0$ denotes the space of optional processes with $\mu$-absolutely continuous paths and essentially bounded variation. The {\em singular control} problem is obtained by allowing $c$ to be of bounded variation, not just absolutely continuous. The problem becomes
\begin{equation}\tag{SCP}\label{scp}
\begin{aligned}
&\minimize_{c \in\N_0^\infty}\quad &  E[&I_g(\dot z) + e(\dot z_T) + J_{h^*}(Dc)]\\
&\st & &
\begin{cases}
z_0=0,\\
\dot z_t=Az_t + Bc_t + W_t,
\end{cases}
\end{aligned}
\end{equation}
where the functional $J_{h^*}:\M^\infty\to\ereals$ is defined as in Section~\ref{sec:ifrp}.

In the one-dimensional case, with $g(z)=\frac{1}{2}r|z|^2$ and
\[
h^*(c) = 
\begin{cases}
c^2 & \text{if $|c|\le k/2$},\\
k|c|-k^2/4 & \text{if $|c|\ge k/2$}
\end{cases}
\]
for some nonnegative constants $r$ and $k$, we recover a finite-horizon version of the singular stochastic control problem studied by Lehoczky and Shreve~\cite{ls86} (note that they wrote the problem in terms of the variables $x=\dot z$). Whereas \cite{ls86} analyzed the Hamilton--Jacobi--Bellman equation associated with the above one-dimensional case, we use convex duality to derive a dual problem and optimality conditions in the general case. The optimality conditions come in the form of a maximum principle where the absolutely continuous and the singular parts of the optimal control are characterized as pointwise minimizers of the Hamiltonian and its recession function, respectively.

We will relate problem \eqref{scp} to the following dual problem
\begin{equation}\tag{DCP}\label{dcp}
\begin{aligned}
&\minimize_{w^*\in\L^1,\eta^*\in L^1}\quad & & EI_{\tilde g^*}(w^*) + E\tilde e^*(\eta^*) + EI_h(B^T\op p) + \delta_{\R^1(D)}(B^T\op p)\\
&\st & &
\begin{cases}
\dot p_t=-A^Tp_t + w^*_t,\\
p_T=-\eta^*,
\end{cases}
\end{aligned}
\end{equation}
where $\tilde g_t(x,\omega)=g_t(x+\dot a_t,\omega)$, $\tilde e(x,\omega)=e(x+\dot a_T)$ and $a_t=\int_0^te^{(t-s)A}W_sd\mu_s$. 
We say that a normal $\F$-integrand $f$ is {\em integrable} if $f(x,\cdot)\in L^1$ for all $x\in\reals^d$. Combining Theorem~\ref{thm:ifam2} with the conjugate duality framework of \cite{roc74} yields the following. The proof is given in Section~\ref{sec:proofscp} below.

\begin{theorem}\label{thm:scp}
Assume that the optimum value is finite, $\tilde g$ and $\tilde e$ are integrable, $h$ is regular, and that $J_{h^*}(0)\in L^1$. Then $\inf\eqref{scp}=-\inf\eqref{dcp}$ and the infimum in \eqref{dcp} is attained. Moreover, $c\in\N^\infty_0$ attains the infimum in \eqref{scp} if and only if there exist $w^*\in\L^1$ such that $p_T\in L^1$ and, almost surely,
\begin{align*}
\partial h(B^T\op p) &\ni dc/d\mu\quad \mu\text{-a.e.}\\
\partial^sh(B^T\op p) &\ni dc^s/|dc|\quad |Dc^s|\text{-a.e.}\\
\partial g(\dot z) &\ni w^*\quad \mu\text{-a.e.}\\
\partial e(z_T) &\ni -p_T,
\end{align*}
where $z$ and $p$ are the corresponding solutions of the primal and dual system equations.
\end{theorem}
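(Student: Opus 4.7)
The plan is to apply Rockafellar's conjugate duality framework~\cite{roc74}, with Theorem~\ref{thm:ifam2} supplying the conjugate pair on $\R^1\leftrightarrow\M^\infty$. Begin with the affine substitution $\tilde z := z - a$ where $a_t := \int_0^t e^{(t-s)A}W_s\,d\mu_s$ satisfies $\dot a = Aa + W$, $a_0=0$. Then $\tilde z_0 = 0$, $\dot{\tilde z} = A\tilde z + Bc$, and $I_g(\dot z) = I_{\tilde g}(\dot{\tilde z})$, $e(\dot z_T) = \tilde e(\dot{\tilde z}_T)$, so we may assume $W \equiv 0$ and work with $\tilde g, \tilde e$.

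Write the reduced primal cost as $\phi(c) := EI_{\tilde g}(\dot{\tilde z}(c)) + E\tilde e(\dot{\tilde z}_T(c)) + EJ_{h^*}(Dc)$. By Theorem~\ref{thm:cif} and integrability of $\tilde g,\tilde e$, $(EI_{\tilde g})^* = EI_{\tilde g^*}$ on $\L^1$ and $(E\tilde e)^* = E\tilde e^*$ on $L^1(\F_T)$; by Theorem~\ref{thm:ifam2} and biconjugacy, $EJ_{h^*} = (EI_h + \delta_{\R^1(D)})^*$ in the $\R^1$--$\M^\infty$ duality. Substituting the biconjugate representations into each term and interchanging $\inf_c$ with the resulting $\sup_{(w^*,\eta^*,\xi)}$ over $\L^1\times L^1(\F_T)\times\R^1$ gives the weak-duality inequality
\[
\inf_c\phi(c) \ge \sup\Big\{\inf_c\big[E\!\int w^*\dot{\tilde z}(c)\,d\mu + E[\eta^*\dot{\tilde z}_T(c)] + E\!\int\xi\,dc\big] - EI_{\tilde g^*}(w^*) - E\tilde e^*(\eta^*) - EI_h(\xi) - \delta_{\R^1(D)}(\xi)\Big\}.
\]

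The decisive step is to compute the inner infimum. For $(w^*,\eta^*)$ fixed, let $p$ solve $\dot p = -A^T p + w^*$, $p_T = -\eta^*$. Stieltjes integration by parts on $d(p\cdot\tilde z)$, combined with substitution of the primal dynamics and a second integration by parts of the residual $c\,dp$-term, plus the identity $E\!\int B^T p\,dc = E\!\int B^T\op p\,dc$ (from optionality of $Dc$ and $\mu$), yields the key identity
\[
E\!\int w^*\dot{\tilde z}(c)\,d\mu + E[\eta^*\dot{\tilde z}_T(c)] = -E\!\int B^T\op p\,dc \qquad \forall c\in\N_0^\infty.
\]
The linear functional of $c$ in the inner infimum therefore equals $E\!\int(\xi - B^T\op p)\,dc$, vanishing for all $c$ iff $\xi = B^T\op p$ and $-\infty$ otherwise. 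Substituting this adjoint constraint makes the right-hand side precisely $-\inf\eqref{dcp}$. The hypotheses $J_{h^*}(0)\in L^1$ and integrability of $\tilde g,\tilde e$ make $\phi(0)$ finite and $\phi$ continuous at $0$ in the appropriate Banach topologies, which is the qualification needed in Rockafellar's theorem; hence equality holds throughout and the dual infimum is attained.

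The optimality conditions then follow from the saddle-point criterion: the supremum in each biconjugate representation is attained at the optimum, i.e., $w^*\in\partial EI_{\tilde g}(\dot{\tilde z})$, $\eta^*\in\partial E\tilde e(\dot{\tilde z}_T)$, and $B^T\op p\in\partial EJ_{h^*}(Dc)$. Theorem~\ref{thm:cif} unfolds the first two into the pointwise subdifferential inclusions, while Theorem~\ref{thm:ifam2} applied to the conjugacy-equivalent statement $Dc\in\partial(EI_h+\delta_{\R^1(D)})(B^T\op p)$ splits the third into the conditions on the absolutely continuous and singular parts of $Dc$. Translating back through $\tilde g(x) = g(x+\dot a)$, $\tilde e(x) = e(x+\dot a_T)$, and $\eta^* = -p_T$ recovers the four inclusions in the theorem. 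The main technical obstacle is the integration-by-parts identity, which demands careful handling of left-continuous BV controls paired against continuous but non-adapted adjoint processes, together with the invocation of optionality of $Dc$ and $\mu$ to pass from $p$ to $\op p$; verifying the qualification condition for zero duality gap in this mixed Banach-space setting is a secondary concern.
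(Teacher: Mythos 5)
Your overall route is the paper's: reduce via the shift $a_t=\int_0^te^{(t-s)A}W_s\,d\mu_s$ to the integrands $\tilde g,\tilde e$, dualize in Rockafellar's conjugate-duality framework, use the pathwise integration-by-parts identity $E\int w^*\dot{\tilde z}\,d\mu+E[\eta^*\dot{\tilde z}_T]=-E\int B^T\op p\,dc$ (the paper's Lemma~\ref{lem:A}) to identify the adjoint, and then unfold the optimality conditions through Theorem~\ref{thm:cif} and Theorem~\ref{thm:ifam2}. The weak-duality computation with the extra variable $\xi$ eliminated by $\xi=B^T\op p$ is just a reshuffling of the paper's Lagrangian calculation.

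The genuine gap is the strong-duality/attainment step, which you dismiss as "a secondary concern" and justify incorrectly. You claim the needed qualification is that the primal cost $\phi(c)$ is finite and continuous at $c=0$. First, that is not the hypothesis of Rockafellar's theorem: the qualification concerns the optimal value function $\varphi(u)=\inf_cF(c,u)$ as a function of a \emph{perturbation} $u$, so you must actually choose a perturbation -- the paper takes $F(c,(w,\eta))=E[I_{\tilde g}(\dot\A c+w)+\tilde e((\dot\A c)_T+\eta)+J_{h^*}(Dc)]$ with $(w,\eta)\in\L^\infty\times L^\infty$ -- and prove continuity of $\varphi$ at $u=0$, which the paper gets from $\varphi(u)\le F(0,u)=E[I_{\tilde g}(w)+\tilde e(\eta)+J_{h^*}(0)]$ (finite by integrability of $\tilde g,\tilde e$ and $J_{h^*}(0)\in L^1$), Mackey continuity of this bound by \cite[Theorem~22]{roc74}, and then \cite[Theorem~8]{roc74}. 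Second, continuity of $\phi$ in $c$ is in general false, since $EJ_{h^*}(Dc)$ is extended real-valued (e.g.\ when $h^*$ has effective domain constraints), so your stated condition would fail even when the theorem holds. Third, "appropriate Banach topologies" is not enough: to obtain dual multipliers in $\L^1\times L^1$ (rather than in the norm-dual of $\L^\infty$, which contains singular finitely additive elements) and hence attainment in \eqref{dcp} and the existence statement in the optimality conditions, the continuity must be with respect to the Mackey topology of the pairing $(\L^\infty\times L^\infty,\L^1\times L^1)$. Without this perturbation setup and Mackey-continuity argument, both $\inf\eqref{scp}=-\inf\eqref{dcp}$ with dual attainment and the "only if" direction of the optimality conditions (which presupposes existence of an optimal $(w^*,\eta^*)$) remain unproved.
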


The optimality conditions in Theorem~\ref{thm:scp} can be written in terms of the {\em Hamiltonian}
\[
H_t(z,c,p) := g_t(z) + h^*_t(c) - p\cdot(Az+ Bc+W_t)
\]
much like in the classical Pontryagin maximum principle. Indeed, by \cite[Theorem~23.5]{roc71a}, the first two subdifferential inclusions in Theorem~\ref{thm:scp} mean that
\begin{align*}
dc/d\mu &\in \argmin_cH(\dot z,c,\op p) \quad \mu\text{-a.e.},\\
dc^s/|dc^s| &\in \argmin_cH^\infty(\dot z,c,\op p) \quad |Dc^s|\text{-a.e.},
\end{align*}
where $H^\infty(\cdot,\cdot,p):=H(\cdot,\cdot,p)^\infty$, while the third one implies that the dual state $p$ satisfies the differential inclusion
\[
\dot p\in\partial_zH(\dot z,c,p)
\]
in symmetry with the primal system equation which can be written as 
\[
\dot z\in\partial_p[-H](z,c,p).
\]
The above conditions are reminiscent of the maximum principle derived in \cite{ch94} for problems where the objective is linear in the singular part of the control. 
While the maximum principle of \cite{ch94} characterizes optimal control processes as minimizers of a certain integral functional, the above conditions give explicit pointwise characterizations for both the absolutely continuous and singular parts.

\begin{example}
Consider the problem
\begin{equation*}
\maximize\quad E\left[\int U_t(c_t)d\mu-\int D_t dc+U_T(c_T)\right]\quad\ovr c\in\N_0^\infty \text{ with }dc\ge 0,
\end{equation*}
where $U$ is a nondecreasing concave optional integrand, and $D=\op {\tilde D}$ for some nonnegative nonincreasing c\'adl\'ag process $\tilde D$. This is a finite horizon version of problem (17) in \cite[Theorem~3.1]{bk16}. 

This problem fits into \eqref{scp} with $d=1$, $A=0$, $B=1$, $W=0$, $g_t(c,\omega)=-U_t(c,\omega)$, $e=-U_T(c,\omega)$ and 
\begin{align*}
h_t(y,\omega)=
\begin{cases} 0\quad &\text{if } y\le D_t(\omega),\\
+\infty\quad &\text{otherwise.}
\end{cases}
\end{align*}
Indeed, we then have
\[
h^*_t(c,\omega)=
\begin{cases} D_t(\omega)c \quad &\text{if } c\ge 0, \\
+\infty\quad &\text{otherwise,}
\end{cases}
\]
so (up to a change of signs) \eqref{scp} reduces to the problem above. Moreover, $h$ is regular. Indeed, defining
\begin{align*}
\tilde h_t(y,\omega)=
\begin{cases} 0\quad &\text{if } y\le \tilde D_t(\omega)\\
+\infty\quad &\text{otherwise,}
\end{cases}
\end{align*}
we have
\[
\tilde h^*_t(c,\omega)=
\begin{cases} \tilde D_t(\omega)c \quad &\text{if } c\ge 0, \\
+\infty\quad &\text{otherwise}
\end{cases}
\]
and $h^*=\op(\tilde h^*)$. Since $\tilde D$ is nonincreasing and cadlag, the distance function $t\mapsto d(y,\dom \tilde h_t)$ is upper semicontinuous for every $y\in\reals$ so $\dom \tilde h_t$ is inner semicontinuous, by \cite[Proposition 5.11]{rw98}.

The dual problem \eqref{dcp} becomes
\[
\minimize\quad E\left[\int V_t(-\dot p_t)d\mu + V_T(p_T)\right]\quad\ovr \dot p\in\L^1 \text{ with }\op p\le D,
\]
where $V_t(q)=(-U_t)^*(q)$. This corresponds to (11) of \cite{bk16}. The optimality conditions can be written as
\begin{align*}
\op p &\le D,\ Dc\ge 0,\ \int (D-\op p)dc=0,\\
\dot p_t &\in \partial [-U_t](c_t)d\mu,\\
p_T &\in\partial [-U_T](c_T).
\end{align*}
Assuming $U$ is differentiable, these can be written as
\begin{align*}
\op p &\le D,\ Dc\ge 0,\ \int (D-\op p)dc=0,\\
p_t &=U'_T(c_T)+\int_t^T U'_t(c_t)d\mu
\end{align*}
which are exactly the optimality conditions in \cite[Theorem~3.1]{bk16}.

Bank and Kauppila consider the case where, almost surely, $U_t(\cdot,\omega)$ is strictly convex, differentiable on $(0,\infty)$, satisfies the Inada conditions $U'_t(0,\omega)=\infty$ and $\lim_{c\to\infty} U'_t(c,\omega)=0$, and $U(c)\in \L^1$ for every $c\in\reals_+$.  Theorem~\ref{thm:scp} establishes the existence of a dual solution in the complementary case where $U(c)\in\L^1$ for every $c\in\reals$. 
\end{example}

\subsection{Proof of Theorem~\ref{thm:scp}}\label{sec:proofscp}

Our proof is based on general results of Rockafellar~\cite{roc74} on duality and optimality in convex optimization problems that here take the form
\begin{equation}\label{p}\tag{P}
\minimize\quad F(x,u)\quad\ovr\quad x\in\N^\infty_0,
\end{equation}
where the parameter $u$ belongs to a LCTVS $U$ in separating duality with a LCTVS $Y$ and $F$ is a proper convex function on $\N^\infty_0\times U$ such that $F(x,\cdot)$ is closed for every $x\in\N^\infty_0$. The associated {\em Lagrangian}
\[
L(x,y) := \inf\{F(x,u)-\langle u,y\rangle\}
\]
is an extended real-valued function on $\N^\infty_0\times Y$, convex in $x$ and concave in $y$. Denoting the optimum value of \eqref{p} by $\varphi(u)$, we have
\[
\varphi^*(y) = -\inf_{x\in\N_0^\infty}L(x,y).
\]
The following result, obtained by combining Theorem~17 and Corollary~15A of \cite{roc74}, suffices for us.

\begin{theorem}\label{thm:cd}
Assume that the optimal value function
\[
\varphi(u) = \inf_{x\in\N^\infty_0}F(x,u)
\]
is proper and continuous on $U$. Then $\varphi=\varphi^{**}$ and an $x\in\N^\infty_0$ solves \eqref{p} if and only if there exists $y\in Y$ such that
\[
0\in\partial_x L(x,y)\quad\text{and}\quad u\in\partial_y [-L](x,y).
\]
\end{theorem}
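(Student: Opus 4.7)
The result is the classical Fenchel--Rockafellar perturbation duality principle, and my plan is to combine the Fenchel--Moreau biconjugate theorem for the value function $\varphi$ with Fenchel--Young applied to a subgradient of $\varphi$ at $u$. First, observe that joint convexity of $F$ on $\N_0^\infty\times U$ makes the marginal $\varphi(u)=\inf_{x\in\N_0^\infty}F(x,u)$ convex; together with the hypothesized continuity, $\varphi$ is proper convex and lsc, so Fenchel--Moreau in the dual pair $(U,Y)$ yields $\varphi=\varphi^{**}$. A routine swap of suprema gives
\[
\varphi^*(y)=\sup_u\sup_x\{\langle u,y\rangle-F(x,u)\}=-\inf_{x\in\N_0^\infty}L(x,y),
\]
and continuity of $\varphi$ at $u$ ensures $\partial\varphi(u)\neq\emptyset$.

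For the forward direction of the optimality condition, fix an optimal $x$ and any $y\in\partial\varphi(u)$. The Fenchel--Young equality $\varphi(u)+\varphi^*(y)=\langle u,y\rangle$, together with $F(x,u)=\varphi(u)$ and the dual value formula, produces the chain
\[
F(x,u)-\langle u,y\rangle\;\ge\;L(x,y)\;\ge\;\inf_{x'}L(x',y)\;=\;-\varphi^*(y)\;=\;F(x,u)-\langle u,y\rangle,
\]
forcing both inequalities to be equalities. The second equality $L(x,y)=\inf_{x'}L(x',y)$ is precisely $0\in\partial_x L(x,y)$. The first equality $F(x,u)-\langle u,y\rangle=L(x,y)$ says that $u$ minimizes $F(x,\cdot)-\langle\cdot,y\rangle$; since $F(x,\cdot)$ is closed proper convex and $-L(x,\cdot)=F(x,\cdot)^*$, Fenchel--Young in $U$ translates this into $u\in\partial_y[-L](x,y)$.

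For the converse, given $x,y$ satisfying both inclusions, the first yields $L(x,y)=\inf_{x'}L(x',y)=-\varphi^*(y)$, and the second (read through $-L(x,\cdot)=F(x,\cdot)^*$ and closedness of $F(x,\cdot)$) yields $F(x,u)-\langle u,y\rangle=L(x,y)$; adding and using $\varphi=\varphi^{**}$,
\[
F(x,u)=\langle u,y\rangle-\varphi^*(y)\;\le\;\varphi^{**}(u)=\varphi(u),
\]
so the trivial bound $F(x,u)\ge\varphi(u)$ forces optimality of $x$. The only non-routine point is ensuring $\varphi=\varphi^{**}$, which here reduces to lower semicontinuity of $\varphi$ -- automatic from the continuity hypothesis -- so the argument is essentially a packaging of \cite[Theorem~17]{roc74} and \cite[Corollary~15A]{roc74}.
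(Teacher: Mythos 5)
Your proof is correct and is essentially the argument the paper relies on: the paper gives no proof of its own here, merely importing the statement from \cite[Theorem~17 and Corollary~15A]{roc74}, and your derivation (biconjugation of the closed proper convex value function, the identity $\varphi^*(y)=-\inf_x L(x,y)$, and Fenchel--Young applied both to $y\in\partial\varphi(u)$ and to the closed function $F(x,\cdot)$ via $-L(x,\cdot)=F(x,\cdot)^*$) is precisely the standard proof behind that citation. The only point to keep in mind is that both uses of the continuity hypothesis (lower semicontinuity of $\varphi$, hence $\varphi=\varphi^{**}$, and nonemptiness of $\partial\varphi(u)$ with the subgradient landing in $Y$) require the topology on $U$ to be compatible with the pairing $(U,Y)$, which is how the hypothesis is meant and how it is verified (Mackey continuity) in the application.
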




\begin{lemma}\label{lem:A}
For any $c\in\N_0^\infty$, the system equation of \eqref{scp} has a unique solution given by 
\[
z=\A c+a
\]
where $\A c$ is the unique pathwise solution of
\[
\begin{cases}
z_0=0,\\
\dot z_t=Az_t + Bc_t.
\end{cases}
\]
Denoting $\dot\A c:=\frac{d\A c}{d\mu}$, the linear mapping $c\mapsto(\dot\A c,(\dot\A c)_T)$ from $\N^\infty_0$ to $\L^\infty\times L^\infty$ is continuous and for each $c\in\N_0^\infty$ and $(w^*,\eta^*)\in\L^1\times L^1$, we have 
\[
\langle (\dot\A c,(\dot\A c)_T),(w^*,\eta^*)\rangle = \langle -B^T\op p,Dc\rangle_{\R^1},
\]
where $p$ is the unique pathwise solution of
\[
\begin{cases}
p_T=-\eta^*,\\
\dot p_t=-A^Tp_t + w^*_t.
\end{cases}
\]
\end{lemma}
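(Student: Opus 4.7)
The plan is to address the three assertions in turn, with the adjoint identity being the substantive content.

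The first two are essentially classical ODE facts lifted pathwise. Reading the system equation as the Volterra equation $z_t = \int_0^t (Az+Bc+W)\,d\mu$, pathwise Picard iteration (or Gronwall) gives a unique $\mu$-absolutely continuous solution $z$, and by linearity $z = \A c + a$. For the continuity claim, a pathwise Gronwall estimate applied to the homogeneous equation yields $\|\A c\|_\infty \le K\|c\|_\infty$ and hence $\|\dot{\A c}\|_\infty \le (|A|K+|B|)\|c\|_\infty$ with $K=K(|A|,|B|,\mu([0,T]))$; combining with $\|c\|_\infty \le \|Dc\|_{TV}$ (which follows from $c_0 = 0$ and $c_t = Dc([0,t))$) and taking essential suprema yields the required bound by $\|c\|_{\N_0^\infty} = \esssup\|Dc\|_{TV}$.

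The heart of the argument is the adjoint identity. I first construct $p$ as the unique pathwise $\mu$-absolutely continuous solution of the backward equation $\dot p = -A^T p + w^*$, $p_T = -\eta^*$; backward Gronwall together with $(w^*,\eta^*)\in\L^1\times L^1$ gives $\sup_t|p_t|\in L^1$, so $B^T p\in L^1(C)$. Since $z := \A c$ is pathwise $\mu$-absolutely continuous, $\dot z = Az + Bc$ is of bounded variation with Stieltjes differential $d(\dot z) = A\dot z\,d\mu + B\,dc$. Applying Stieltjes integration by parts to $\dot z\cdot p$ pathwise yields
\[
\int_{[0,T]} \dot z\cdot dp + \int_{[0,T]} p\cdot d(\dot z) = \dot z_T\cdot p_T - \dot z_0\cdot p_0.
\]
Substituting $\dot z_0 = 0$, $p_T = -\eta^*$, $dp = \dot p\,d\mu$, $w^* = \dot p + A^T p$, and the decomposition of $d(\dot z)$, the two cross terms involving $A^T p\cdot\dot z$ cancel and I am left with
\[
\int_0^T \dot z\cdot w^*\,d\mu + \dot z_T\cdot\eta^* = -\int_{[0,T]} B^T p\cdot dc.
\]
Taking expectations and invoking the defining property of $\M^\infty$ --- namely $E\int v\,d\theta = E\int\op v\,d\theta$ for $v\in L^1(C)$ and $\theta\in\M^\infty$ --- lets me replace $B^T p$ by $B^T\op p$ on the right, producing the claimed duality pairing.

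The main obstacle is the integration-by-parts step: when $\mu$ has atoms or $c$ has matching jumps, one must appeal to the general BV product formula and verify that the resulting jump corrections vanish, which they do thanks to $p$ being continuous (as a $\mu$-absolutely continuous process) and the boundary conditions $z_0 = c_0 = 0$. A secondary technical point is ensuring $B^T p\in L^1(C)$ so that the optional-projection identity on $\M^\infty$ applies; this rests on the integrability of $(w^*,\eta^*)$ combined with (implicit) essential boundedness of $\mu([0,T])$.
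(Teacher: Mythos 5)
Your proof takes essentially the same route as the paper's: the adjoint identity is obtained by pathwise Stieltjes integration by parts (the paper splits it over $Az\cdot p$ and $Bc\cdot p$, you do it in one stroke on $\dot z\cdot p$, which is the same computation) followed by the defining property of $\M^\infty$ to replace $B^Tp$ by $B^T\op p$, with your Gronwall estimates supplying the existence, uniqueness and continuity assertions that the paper states without proof. The only quibble is your justification that the jump corrections vanish because a $\mu$-absolutely continuous $p$ is continuous --- this fails when $\mu$ has atoms --- but the paper's own one-line appeal to integration by parts glosses over exactly the same point, so it does not distinguish the two arguments.
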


\begin{proof}
Given $c\in\N^\infty_0$ and $(w^*,\eta^*)\in\L^1\times L^1$, let $z=\A c$ and let $p$ be the corresponding solution to the dual system equation. Integration by parts gives
\begin{align*}
\langle (\dot\A c,(\dot\A c)_T),(w^*,\eta^*)\rangle &= E[\int\dot z(A^Tp+\dot p)d\mu - \dot z_T\cdot p_T]\\
&= E[\int \dot p\cdot(\dot z-Az)d\mu - (\dot z_T-Az_T)\cdot p_T]\\
&= E[\int \dot p\cdot Bcd\mu - (Bc_T)\cdot p_T]\\
&= -E\int B^Tpdc\\
&= -E\int B^T\op pdc\\
&= \langle -B^T\op p,Dc\rangle_{\R^1}.
\end{align*}
\end{proof}

Using Lemma~\ref{lem:A} we can write~\eqref{scp} as
\begin{equation}\label{scp2}
\minimize\quad  E[I_{\tilde g}(\dot\A c) + \tilde e((\dot\A c)_T) + J_{h^*}(Dc)]\quad \ovr\quad c \in\N_0^\infty,
\end{equation}
Since $\dot a$ is an optional process, $\tilde g$ and $\tilde e$ are optional and $\F$-normal integrands, respectively.

\begin{proof}[Proof of Theorem~\ref{thm:scp}]
Problem \eqref{scp2} fits the general conjugate duality framework with $X=\N^\infty_0$, $U=\L^\infty\times L^\infty$, $Y=\L^1\times L^1$ and
\[
F(c,u)=E[I_{\tilde g}(\dot\A c+w)+\tilde e((\dot\A c)_{T}+\eta)+J_{h^*}(Dc)],
\]
where $u=(w,\eta)$. Clearly
\[
\varphi(u):=\inf_{c\in\N^\infty_0}F(c,u)\le F(0,u) = E[I_{\tilde g}(w)+\tilde e(\eta) + J_{h^*}(0)].
\]
Since $\tilde g$ and $\tilde e$ are integrable, the last expression is Mackey-continuous on $U$; see \cite[Theorem~22]{roc74}. By \cite[Theorem~8]{roc74}, $\varphi$ is then Mackey-continuous as well. We may thus apply Theorem~\ref{thm:cd}. 

Denoting $y=(w^*,\eta^*)$ and using the interchange rule, the Lagrangian can be expressed as
\begin{align*}
L(c,y) &=\inf_{u\in\U}\{E\int [\tilde g(\dot\A c+w)-w\cdot w^*]d\mu+ E[\tilde e((\dot\A c)_{T}+\eta)-\eta\cdot\eta^*]\}+EJ_{h^*}(Dc)\\
&=E\int [\dot\A c\cdot w^*-\tilde g^*(w^*)]d\mu+ E[(\dot\A c)_{T}\cdot\eta^*-\tilde e^*(\eta^*)] + EJ_{h^*}(Dc)\\
&=\langle (\dot\A c,(\dot\A c)_T),(w^*,\eta^*)\rangle - EI_{\tilde g^*}(w^*) - E\tilde e^*(\eta^*) + EJ_{h^*}(Dc).
\end{align*}
By Lemma~\ref{lem:A}, $\langle (\dot\A c,(\dot\A c)_T),(w^*,\eta^*)\rangle=\langle -B^T\op p,Dc\rangle_{\R^1}$, where $p$ is the solution to the dual system equations. Theorem~\ref{thm:ifam2} now gives
\[
\varphi^*(y) = -\inf_{c\in\N^\infty_0}L(c,y) = EI_{\tilde g^*}(w^*) + E\tilde e^*(\eta^*) + EI_h(B^T\op p),
\]
so the
first claim follows from Theorem~\ref{thm:cd}.

Subdifferentiating the Lagrangian gives the optimality conditions
\begin{align*}
\partial EJ_{h^*}(Dc) - B^T\op p&\ni 0,\\
\partial EI_{\tilde g^*}(w^*) - \dot\A c&\ni 0,\\
\partial E\tilde e^*(\eta^*) - (\dot\A c)_T &\ni 0,
\end{align*}
or equivalently,
\begin{align*}
\partial(EJ_{h^*})^*(B^T\op p) &\ni Dc,\\
\partial(EI_{\tilde g^*})^*(\dot\A c) &\ni w^*,\\
\partial(E\tilde e^*)^*((\dot\A c)_T)  &\ni \eta^*.
\end{align*}
Applying the subdifferential formulas in Theorem~\ref{thm:cif} and Theorem~\ref{thm:ifam2} and recalling that $\tilde g_t(x,\omega)=g_t(x+\dot a_t,\omega)$ and $\tilde e(x,\omega)=e(x+\dot a_T)$, gives the optimality conditions in the statement.
\end{proof}

\section{Appendix}

\subsection{Normal integrands on Suslin spaces}

This section proves the claims made in Section~\ref{sec:if} concerning criteria for checking that a function $f:U\times\Xi\rightarrow\ereals$ is a normal integrand. For $U=\reals^d$, these results are well known and can be found e.g.\ in \cite{roc76} and \cite[Chapter~14]{rw98}. Various extensions exist beyond the finite-dimensional case. Below, we allow for a locally convex Suslin space $U$ which covers the function spaces studied in this paper. Note that Suslin spaces are separable since the image of a countable dense set under a continuous surjection is dense.

\begin{lemma}\label{lem:nor}
If $(\Xi,\A)$ is complete with respect to some $\sigma$-finite measure $m$, then any $\B(U)\otimes\A$-measurable function $f$ such that $f(\cdot,\xi)$ is lsc, is a normal integrand. The converse holds if $U$ is a countable union of Borel sets that are Polish spaces in the relative topology.
\end{lemma}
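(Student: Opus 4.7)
The plan is to establish the two implications separately.

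For the forward direction, I would start from the observation that $\epi f(\cdot,\xi)$ is closed for each $\xi$ since $f(\cdot,\xi)$ is lsc, and then attack measurability through the graph. Joint $\B(U)\otimes\A$-measurability of $f$ places the set
\[
\gph(\epi f) = \{(u,\alpha,\xi)\in U\times\reals\times\Xi \mid f(u,\xi)\leq\alpha\}
\]
in $\B(U)\otimes\B(\reals)\otimes\A=\B(U\times\reals)\otimes\A$, the identification of Borel sigma-algebras being valid because $U$ is Suslin and hence separable. For any open $O\subseteq U\times\reals$ one then has
\[
(\epi f)^{-1}(O) = \pi_\Xi\bigl(\gph(\epi f)\cap(O\times\Xi)\bigr),
\]
and the classical projection theorem for Suslin spaces identifies this projection as a Suslin subset of $\Xi$. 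The $m$-completeness of $\A$ then places it in $\A$; this is the only point in the argument where the completeness hypothesis is actually invoked.

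For the converse, I would decompose $U=\bigcup_n U_n$ with each $U_n$ Borel and Polish in its relative topology, and fix a compatible metric $d_n$ on the Polish product $U_n\times\reals$. The restricted multifunction $S_n(\xi):=\epi f(\cdot,\xi)\cap(U_n\times\reals)$ is closed-valued and measurable into this Polish space, so Castaing's representation theorem supplies countably many $\A$-measurable selections $\sigma_k^n:\Xi\to U_n\times\reals$ whose values are dense in $S_n(\xi)$ whenever $S_n(\xi)$ is nonempty. Then
\[
\gph S_n = \bigcap_j \bigcup_k \bigl\{(u,\alpha,\xi) \in U_n\times\reals\times\Xi \mid d_n((u,\alpha),\sigma_k^n(\xi))<1/j\bigr\}
\]
lies in $\B(U\times\reals)\otimes\A$, and hence so does $\gph(\epi f)=\bigcup_n \gph S_n$. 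Taking sections at any fixed $\alpha\in\reals$ yields $\{f\leq\alpha\}\in\B(U)\otimes\A$, which is joint measurability of $f$.

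The main obstacle in the forward direction will be the careful application of the Suslin projection theorem in a locally convex, possibly non-metrizable setting; this is precisely what forces $m$-completeness of $\A$ into the hypotheses. In the converse, the only delicate point is handling the exceptional set $\{\xi \mid \epi f(\cdot,\xi)=\emptyset\}$ before invoking Castaing's theorem, but this set lies in $\A$ (by the very definition of measurability applied to the open set $U\times\reals$) and $f\equiv+\infty$ there, so it is easily disposed of.
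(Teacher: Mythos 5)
Your argument is correct and follows essentially the same route as the paper: the forward direction is the identical graph-measurability-plus-Suslin-projection-theorem argument (completeness entering exactly where you say), and the converse uses the same restriction to the Borel Polish pieces $U_n$, with your Castaing-representation computation of $\gph S_n$ and the section-at-fixed-$\alpha$ step simply unpacking the results the paper cites (Castaing--Valadier's graph-measurability theorem and Valadier's joint-measurability lemma). Note only that, like the paper, you assert rather than prove the measurability of the restricted epigraphical mapping $S_n$ as a map into $U_n\times\reals$, and the exceptional set you need to excise before applying Castaing is $\{\xi\mid S_n(\xi)=\emptyset\}$ rather than $\{\xi\mid\epi f(\cdot,\xi)=\emptyset\}$.
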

\begin{proof}
Since $f$ is measurable, the graph $\gph(\epi f)$ of the set-valued mapping $\xi\tos\epi f(\cdot,\xi)$ is $\F\otimes\B(U)\otimes\B(\reals)$-measurable, where $\B(U)\otimes\B(\reals)=\B(U\times\reals)$ \cite[Lemma 6.4.2]{bog7}. The space $U\times\reals$ is also Suslin \cite[Lemma 6.6.5]{bog7}. For any open $O\subset U\times\reals$, the set $\gph(\epi f)\cap (\Omega\times O)$ is measurable, so by the projection theorem \cite[Theorem III.23]{cv77}, $\{\omega \mid \exists (u,\alpha): (\omega,u,\alpha)\in\gph(\epi f)\cap (\Omega\times O)\}$ belongs to $\F$.

To prove the converse, denote by $P^\nu$ the Borel sets in question. We have that $\B(P^\nu)$ coincides with $\B(U)\cap P^\nu$, so $f$ is jointly measurable if and only if its restriction $f^\nu:=f_{|P^\nu\times\Omega}$ is jointly measurable for each $\nu$. 
Since $\epi f^\nu$ is a measurable closed-valued mapping from $\Omega$ to $P^\nu\times\reals$, its graph is measurable by \cite[Theorem III.30]{cv77}, and thus, by \cite[Lemma 7]{val75}, $f^\nu$ is jointly measurable.
\end{proof}


From now on we assume that $U$ is a countable union of Borel sets that are Polish spaces in the relative topology. A function $f$ satisfying the assumptions of the following proposition is known as a {\em Carath\'eodory integrand}. If $(\Xi,\A)$ were complete w.r.t.\ some $\sigma$-finite measure $m$, the result would follow from \cite[Lemma 1.2.3]{crv4} and Lemma~\ref{lem:nor}.

\begin{proposition}\label{prop:cni}
If $f:U\times\Xi\rightarrow\reals$ is such that $f(\cdot,\xi)$ is continuous for every $\xi$ and $f(u,\cdot)$ is measurable for every $u\in U$, then $f$ is a normal integrand.
\end{proposition}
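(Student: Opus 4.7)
The plan is to verify the two defining properties of a normal integrand directly: that the epigraphical mapping $\xi \mapsto \epi f(\cdot,\xi)$ is closed-valued and measurable. Closed-valuedness is immediate, since $f(\cdot,\xi)$ is continuous on $U$ and therefore $\{(u,\alpha)\in U\times\reals\mid f(u,\xi)\le\alpha\}$ is closed in $U\times\reals$ for each fixed $\xi$. The whole work lies in measurability, and the approach is to bypass the completeness assumption of Lemma~\ref{lem:nor} by exploiting separability together with continuity in $u$ to rewrite the inverse image of an open set as a countable union of measurable sets.

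First I would fix a countable dense subset $\{u_k\}_{k\in\naturals}$ of $U$. Such a set exists: writing $U=\bigcup_\nu P^\nu$ with each $P^\nu$ Polish (hence separable) in the relative topology, one takes a countable dense subset of each $P^\nu$ and unions them; any open set in $U$ meets some $P^\nu$ in a nonempty relatively open set, and therefore meets the chosen dense sequence.

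Next, for a given open $O\subseteq U\times\reals$, I would establish the identity
\[
\{\xi\in\Xi\mid \epi f(\cdot,\xi)\cap O\ne\emptyset\}
=\bigcup_{\substack{k\in\naturals,\,q\in\mathbb{Q}\\(u_k,q)\in O}}\{\xi\in\Xi\mid f(u_k,\xi)\le q\}.
\]
The inclusion ``$\supseteq$'' is automatic, since $(u_k,q)\in O$ with $f(u_k,\xi)\le q$ exhibits a point of $\epi f(\cdot,\xi)\cap O$. For ``$\subseteq$'', given $(u,\alpha)\in\epi f(\cdot,\xi)\cap O$, I would choose a basic product neighborhood $V\times W\subseteq O$ of $(u,\alpha)$, pick some $\alpha_0\in W$ with $\alpha_0>\alpha\ge f(u,\xi)$, and then use continuity of $f(\cdot,\xi)$ to shrink $V$ to an open $V'\ni u$ on which $f(\cdot,\xi)<\alpha_0$. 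Density of $\{u_k\}$ gives some $u_k\in V'$, and a rational $q$ chosen in $W\cap(f(u_k,\xi),\alpha_0)$ (nonempty since $W$ is a neighborhood of $\alpha_0$) then satisfies $(u_k,q)\in V\times W\subseteq O$ and $f(u_k,\xi)\le q$, as required.

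Once the identity is in hand, measurability is immediate: the index set is countable, each term $\{\xi\in\Xi\mid f(u_k,\xi)\le q\}$ lies in $\A$ by the hypothesis that $f(u_k,\cdot)$ is measurable, and the condition $(u_k,q)\in O$ merely screens which terms appear (it does not depend on $\xi$). Thus the left-hand side belongs to $\A$, proving measurability of the epigraphical mapping.

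The main obstacle I anticipate is the ``$\subseteq$'' inclusion: one has to use both openness of $O$ in the product topology and continuity of $f(\cdot,\xi)$ jointly in order to push a generic epigraph point $(u,\alpha)$ to a rationally-coded approximation $(u_k,q)$ that still lies in $O$ and in the epigraph. The strict inequality trick $\alpha_0>\alpha$ is what allows continuity to absorb the error so that the approximant still satisfies $f(u_k,\xi)\le q$; without it, an equality $f(u,\xi)=\alpha$ on the boundary of the epigraph could not be transferred to the countable grid.
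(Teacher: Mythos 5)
Your proof is correct and follows essentially the same route as the paper: separability of $U$ plus continuity of $f(\cdot,\xi)$ is used to reduce the hitting condition $\epi f(\cdot,\xi)\cap O\ne\emptyset$ to a countable union of sets that are measurable because $f(u_k,\cdot)$ is measurable. The only cosmetic difference is that the paper writes the union over the measurable ``selections'' $(u^\nu, f(u^\nu,\xi)+q)$ landing in $O$, whereas you fix the rational grid points $(u_k,q)\in O$ and test $f(u_k,\xi)\le q$; both hinge on the same density argument in the epigraph.
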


\begin{proof}
Let $\{u^\nu\mid \nu\in\mathbb N\}$ be a dense set in $U$ and define $\alpha^{\nu,q}(\xi)=f(u^\nu,\xi)+q$, where $q\in\mathbb Q_+$. Since $f(\cdot,\xi)$ is continuous, the set $\hat O=\{(u,\alpha)\mid f(u,\xi)<\alpha \}$ is open. For any $(u,\alpha)\in\epi f(\cdot,\xi)$ and for any open neighborhood $O$ of $(u,\alpha)$, $O\cap\hat O$ is open and nonempty, and there exists $(u^\nu,\alpha^{\nu,q})\in O\cap\hat O$, i.e.,  $\{(u^\nu,\alpha^{\nu,q}(\xi)\mid \nu\in\mathbb N, q\in\mathbb Q\}$ is dense in $\epi f(\cdot,\xi)$. Thus for any open set $O\in U\times\reals$,
\begin{align*}
 \{\xi \mid \epi f(\cdot,\xi)\cap O\neq\emptyset\}=\bigcup_{\nu,q}\{\xi \mid (u^\nu,\alpha^{\nu,q}(\xi))\in O\}
\end{align*}
is measurable. 
\end{proof}

Given a proper lower semicontinuous function $f$ and a nonnegative scalar $\alpha$, we define
\[
(\alpha f)(x):=
\begin{cases}
\alpha f(x) & \text{if $\alpha>0$},\\
\delta_{\cl\dom f}(x) & \text{if $\alpha=0$}.
\end{cases}
\]

\begin{lemma}\label{lem:oni}
Let $f$, $f^i$, $i=1,\dots, n$ be proper normal integrands on $U$ and assume that $(\Xi,\A)$ is complete w.r.t.\ a $\sigma$-finite measure or $U=\reals^d$, then the functions
\begin{enumerate}
\item $(u,\xi)\mapsto\alpha(\xi)f(u,\xi)$, where $\alpha\in L^0(\Xi,\reals_+)$,
\item $(u,\xi)\mapsto\sum_{i=1}^nf^i(u,\xi)$,
\item
$f^\infty$ defined by $f^\infty(\cdot,\xi)=f(\cdot,\xi)^\infty$, 
\item $f^*$ defined by $f^*(\cdot,\xi)=f(\cdot,\xi)^*$, where $Y$ is a Suslin space in separating duality with $U$
\end{enumerate}
are normal integrands.
\end{lemma}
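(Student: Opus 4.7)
The plan is to reduce all four claims to Lemma~\ref{lem:nor}: a function on $U\times\Xi$ with lsc sections in $u$ is a normal integrand as soon as it is jointly $\B(U)\otimes\A$-measurable (when $\A$ is complete) or $U=\reals^d$ (where the standard finite-dimensional theory of \cite{roc76} applies directly). Lower semicontinuity of the sections is, in each of (1)--(4), a well-known pointwise fact about $\alpha g$, $g^1+g^2$, $g^\infty$, and $g^*$ for proper lsc convex $g$, so the real work lies in verifying joint measurability.

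For (1), split $\Xi$ into $\{\alpha>0\}\cup\{\alpha=0\}$. On $\{\alpha>0\}$ the function $\alpha(\xi)f(u,\xi)$ is the product of the $\A$-measurable $\alpha$ and the $\B(U)\otimes\A$-measurable $f$ (measurability of $f$ being the converse half of Lemma~\ref{lem:nor}), hence jointly measurable. On $\{\alpha=0\}$ it coincides with $\delta_{\cl\dom f(\cdot,\xi)}$, which is a normal integrand because $\xi\mapsto\cl\dom f(\cdot,\xi)$ is a measurable closed-valued mapping (this is the indicator rule for measurable set-valued mappings already recalled in Section~\ref{sec:if}). Pasting the two sets gives joint measurability. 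For (2), joint measurability of $\sum_i f^i$ is immediate from that of each summand, and pointwise sums of lsc $\ereals$-valued functions are lsc; properness of each $f^i$ makes the sum unambiguous.

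For (3), pick a measurable selection $\bar u:\Xi\to U$ of the closed-valued measurable mapping $\xi\mapsto\dom f(\cdot,\xi)$; such a selection exists by the Suslin measurable selection apparatus under completeness of $\A$, and trivially when $U=\reals^d$. By \cite[Corollary~3C]{roc66} the recession function is then the monotone limit
\[
f^\infty(u,\xi)=\sup_{\alpha\in\mathbb{Q}_{>0}}\frac{f(\bar u(\xi)+\alpha u,\xi)-f(\bar u(\xi),\xi)}{\alpha},
\]
a countable supremum of jointly measurable functions, hence jointly measurable; lower semicontinuity of $f^\infty(\cdot,\xi)$ is standard. For (4), the function $(u,y,\xi)\mapsto\langle u,y\rangle-f(u,\xi)$ is $\B(U)\otimes\B(Y)\otimes\A$-measurable (the bilinear form is jointly Borel since it is separately continuous on the Suslin spaces $U$ and $Y$), so for every $t\in\reals$ the set $\{(y,\xi)\mid f^*(y,\xi)>t\}$ is the projection along the Suslin variable $u$ of a measurable set in $U\times Y\times\Xi$. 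Under completeness of $\A$, the projection theorem \cite[Theorem~III.23]{cv77} gives measurability of $f^*$, and each section $f^*(\cdot,\xi)$ is lsc as a supremum of continuous affine functionals in $y$.

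The main obstacle in all four cases is that $U$ (and $Y$) are only Suslin rather than Polish, so the usual appeals to measurable selection and to projection of measurable sets must be routed through the projection theorem and require completeness of $(\Xi,\A)$; once those tools are in place, each verification is a clean joint-measurability check followed by Lemma~\ref{lem:nor}. In the orthogonal case $U=\reals^d$, Rockafellar's finite-dimensional normal-integrand calculus in \cite{roc76} supplies (1)--(4) directly without any completeness hypothesis on $\A$.
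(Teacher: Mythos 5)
Your overall strategy in the complete case (verify joint measurability plus lower semicontinuity of the sections, then invoke Lemma~\ref{lem:nor}) is exactly the paper's, and your treatments of parts 2 and 3 and of part 1 under completeness essentially match the paper's proof (the paper additionally justifies, via the projection argument of Lemma~\ref{lem:nor}, that $\dom f$ and hence $\cl\dom f$ is a measurable mapping, a point you assert without proof; also note $\dom f(\cdot,\xi)$ need not be closed-valued, so your selection in part 3 should be obtained from graph measurability and the von Neumann--Aumann theorem rather than from a ``closed-valued'' mapping). The genuine problem is part 4. You apply the projection theorem \cite[Theorem~III.23]{cv77} to project the set $\{(u,y,\xi)\mid \langle u,y\rangle-f(u,\xi)>t\}$ along $u$ onto $Y\times\Xi$, but that theorem requires the base space to be complete with respect to a $\sigma$-finite measure, and $(Y\times\Xi,\B(Y)\otimes\A)$ carries no such completeness: completeness of $\A$ does not transfer to the product, and projections along a Suslin factor of $\B(Y)\otimes\A\otimes\B(U)$-measurable sets are in general only analytic/universally measurable in the $Y$-variable, not $\B(Y)\otimes\A$-measurable. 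So this step fails as written. The paper instead cites \cite[Lemma~8]{val75}, whose proof runs through a Castaing representation: by properness and measurability of $\xi\mapsto\epi f(\cdot,\xi)$ one picks a countable family of measurable selections $(u^\nu,\alpha^\nu)$ dense in the epigraph and writes $f^*(y,\xi)=\sup_\nu\{\langle u^\nu(\xi),y\rangle-\alpha^\nu(\xi)\}$, a countable supremum of Carath\'eodory integrands (cf.\ Proposition~\ref{prop:cni}), which is how normality of $f^*$ is actually obtained.

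A second gap concerns part 1 when $U=\reals^d$ and $\A$ is not complete. Your splitting of $\Xi$ into $\{\alpha>0\}$ and $\{\alpha=0\}$ only yields joint measurability plus lsc sections, and without completeness Lemma~\ref{lem:nor} is unavailable, so this does not give normality; and your fallback claim that Rockafellar's finite-dimensional calculus ``supplies (1)--(4) directly'' is not accurate for part 1, precisely because of the convention $(0\,f)=\delta_{\cl\dom f}$ --- the paper explicitly notes that \cite[Chapter~14]{rw98} covers everything except part 1, and handles that case by showing $\alpha^\nu f(\cdot,\xi)$ epi-converges to $\alpha(\xi)f(\cdot,\xi)$ as $\alpha^\nu\searrow\alpha(\xi)$ and invoking \cite[Proposition~14.53]{rw98} on epi-limits of normal integrands. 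You need either that epi-convergence argument or some substitute to cover the $U=\reals^d$, incomplete case of part 1.
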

\begin{proof}
For $U=\reals^d$, proofs can be found in \cite[Chapter 14]{rw98} except for part 1. In this case, using properness of $f(\cdot,\xi)$, it can be shown that $\alpha(\xi)f(\cdot,\xi)$ is the epigraphical limit of $\alpha^\nu f(\cdot,\xi)$ as $\alpha^\nu\searrow\alpha(\xi)$ (see \cite[Chapter 7]{rw98}), so the result follows from \cite[Proposition 14.53]{rw98}.

When $(\Xi,\A)$ is complete, it suffices to verify, by Lemma~\ref{lem:nor}, that the functions are jointly measurable and lower semicontinuous in the second argument. For 2, this follows from the fact that sums of measurable/lsc functions is again measurable/lsc. For 3, it suffices to note that $f^\infty$ is a pointwise increasing limit of jointly measurable functions that are lower semicontinuous in the second argument. Part 4 is from \cite[Lemma 8]{val75}. To prove 1, note first that $\{(u,\xi)\mid f(u,\xi)<\infty\}$ is measurable, so one may proceed as in the proof of Lemma~\ref{lem:nor} to show that $\dom f$ is a measurable set-valued mapping. It follows that $\cl\dom f$ is a measurable mapping as well, so $\delta_{\cl\dom f}$ is a normal integrand and thus jointly measurable by Lemma~\ref{lem:nor}. Since $\alpha f=\one_{\alpha=0}\delta_{cl\dom f}+\one_{\alpha>0}\alpha f$, we get, using part 2, that $f$ is jointly measurable.
\end{proof}

\subsection{Integral functionals as normal integrands}

Let $U$ be a Suslin subspace of Borel measurable functions on $[0,T]$ and consider the random integral functional $I_h:U\times\Omega\to\ereals$ defined scenariowise by
\[
I_h(u,\omega):=I_{h(\cdot,\omega)},
\]
where $h$ and $\mu$ are as in Section~\ref{sec:ifni}. That is, $\mu$ is a nonnegative random Radon measure with full support almost surely and $h$ is a convex normal integrand on $\reals^d\times\Xi$, where $\Xi=\Omega\times[0,T]$ is equipped with the product sigma-algebra $\F\otimes\B([0,T])$.
The following result was used in Lemma~\ref{lem:nif} for $U=C$.
\begin{theorem}\label{thm:mif}
Assume that $U$ is a countable union of Borel sets that are Polish spaces in the relative topology. The function $I_h$ is $\B(U)\otimes\F$-measurable under either of the following conditions:
\begin{enumerate}
\item[(a)] The sequential convergence in $U$ implies pointwise convergence outside a countable set and point-evaluations in $U$ are measurable.
\item[(b)] The topology on $U$ be finer than the topology of convergence in $\mu(\omega)$-measure almost surely.
\end{enumerate}
In such cases, $I_h$ is a normal integrand on $U$ whenever $I_h(\cdot,\omega)$ is lower semicontinuous almost surely.
\end{theorem}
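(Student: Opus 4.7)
The second conclusion follows immediately from the first: if $I_h$ is $\B(U)\otimes\F$-measurable and $I_h(\cdot,\omega)$ is lsc for $P$-a.e.\ $\omega$, then, since $(\Omega,\F,P)$ is complete, Lemma~\ref{lem:nor} yields that $I_h$ is a normal integrand. The plan is therefore to establish joint $\B(U)\otimes\F$-measurability of $I_h$.

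My strategy is to reduce to joint measurability of $H(u,\omega,t):=h(u_t,\omega,t)$ as a function $U\times\Omega\times[0,T]\to\ereals$. Once $H$ is jointly measurable, integration against the measurable random Radon measure $\mu$ (viewed as a transition kernel from $\Omega$ to $[0,T]$) gives that $(u,\omega)\mapsto\int H(u,\omega,t)\,d\mu_\omega(t)$ is $\B(U)\otimes\F$-measurable by a standard Fubini argument applied to $H^+$ and $H^-$, using the convention from Section~\ref{sec:if} that the integral is $+\infty$ unless the positive part is integrable. Since $h$ is $\B(\reals^d)\otimes\F\otimes\B([0,T])$-measurable (being a normal integrand), joint measurability of $H$ reduces further to joint $\B(U)\otimes\B([0,T])$-measurability of the evaluation map $\Phi(u,t):=u_t$.

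Under (a), each point-evaluation $u\mapsto u_t$ is $\B(U)$-measurable and, for each fixed $u\in U$, the section $t\mapsto u_t$ is Borel. To upgrade this separate measurability to joint measurability, I would exploit that $U$ is Suslin and hence separable: fix a countable dense $\{u^k\}\subseteq U$ and, using standard measurable selection on the Polish pieces of $U$, construct Borel selectors $\sigma_n:U\to\{u^1,\dots,u^n\}$ with $\sigma_n(u)\to u$ in $U$. Then
\[
\Phi_n(u,t)\;:=\;\sigma_n(u)_t\;=\;\sum_{k=1}^n u^k_t\,\one_{\{\sigma_n=u^k\}}(u)
\]
is jointly Borel, and by hypothesis (a), $\Phi_n(u,t)\to u_t$ for every $t$ outside a countable set $E(u)$. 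Since convergence occurs pointwise on a jointly Borel set (the complement of $\{(u,t):t\in E(u)\}$, which by the projection theorem and the Suslin property of $U$ is Borel), $\Phi$ agrees with the Borel limit $\liminf_n\Phi_n$ on this set, yielding joint measurability. Under (b), the identity map of $U$ factors continuously through $L^0([0,T],\reals^d;\mu_\omega)$ for $P$-a.e.\ $\omega$; combining measurability of $\mu$ as a random measure with the existence of jointly Borel representatives for $L^0$-valued maps gives joint measurability of $(u,\omega,t)\mapsto u_t$ modulo a $P\otimes\mu$-null set, which is again sufficient because this null set vanishes under integration against $\mu$.

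The main obstacle is the argument under (a): converting separate measurability of the evaluation map into joint measurability requires a careful Castaing-type approximation together with verifying that the slice-wise countable exceptional set is itself jointly Borel, for which the Suslin structure of $U$ and the projection theorem (as already invoked in the proof of Lemma~\ref{lem:nor}) are the decisive tools. Case (b) is more direct, since the finer-than-convergence-in-measure hypothesis produces a genuine measurable evaluation into $L^0$, and the $\mu$-a.e.\ ambiguity is harmless after integration.
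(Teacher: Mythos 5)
Your reduction — prove joint measurability of $(u,\omega,t)\mapsto h_t(u_t,\omega)$ and then integrate against the kernel $\mu$ — is a reasonable plan, but the step where you upgrade separate to joint measurability of the evaluation map under (a) has a genuine gap, and it sits exactly where the difficulty of the theorem lies. Hypothesis (a) gives, for a convergent sequence in $U$, pointwise convergence only outside a countable set of times; since $\mu(\omega)$ is a general Radon measure it may charge countable sets, so these exceptional sets cannot be discarded. Your selector construction shows that $\liminf_n\Phi_n$ agrees with the evaluation $\Phi(u,t)=u_t$ only off the set $G=\{(u,t)\mid t\in E(u)\}$; on $G$ you know nothing about $\Phi$ beyond separate measurability (measurable in $u$ for each fixed $t$, Borel in $t$ for each fixed $u$), and separate measurability does not imply joint measurability. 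Thus at best you obtain measurability of $\Phi$ restricted to the complement of $G$, while the values on $G$ do affect $I_h(u,\omega)$ whenever $\mu(\omega)$ has atoms. Moreover, $E(u)$ is not canonically defined (it depends on the approximating sequence), and the projection theorem yields analytic/universally measurable sets rather than Borel sets, with no ambient complete measure on $U\times[0,T]$ to complete against — note that avoiding such completeness assumptions is a stated concern of the paper. The sketch for (b) has the same character: the existence of a jointly measurable modification of the evaluation that is $\mu(\omega)$-a.e.\ correct for $P$-a.e.\ $\omega$, simultaneously for all $u$, is a nontrivial measurable-version statement that you assert rather than prove.

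The paper's proof is organized precisely so as not to need joint measurability of the evaluation map on all of $U\times[0,T]$. It first reduces, via the truncations $\sup\{h,\alpha\}$ and $\min\{h,\gamma\}$, the Lipschitz (Moreau--Yosida) regularization $h^\lambda$, and monotone convergence, to integrands with bounded continuous sections; for \emph{atomless} $\mu$, conditions (a) or (b) then make $I_h(\cdot,\omega)$ itself continuous on $U$ (countable exceptional sets are $\mu(\omega)$-null, so dominated convergence applies), so $I_h$ is a Carath\'eodory integrand in $(u,\omega)$ and Proposition~\ref{prop:cni} together with Lemma~\ref{lem:nor} applies. The atomic part — exactly the case your argument cannot reach — is handled separately by decomposing $\mu=\mu^c+\mu^d$ with $\mu^d$ carried by the graphs of countably many random times, which turns that contribution into a countable sum of single evaluations exploited through the measurability of point evaluations. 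To salvage your route you would either have to restrict to spaces $U$ on which evaluation is genuinely jointly measurable (e.g.\ $U=C$, where it is jointly continuous), or reproduce some version of this atomless/atomic splitting; as written, the claimed joint measurability of $H$ is not established.
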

\begin{proof}
We only give proof for the first set of conditions, the second case is similar. We may assume without loss of generality that $h$ is bounded from below. Indeed, If $I_{h^\alpha}$ is $\B(U)\otimes\F$-measurable for $h^\alpha=\sup\{h,\alpha\}$ and $\alpha<0$, then, by the monotone convergence theorem (recall that by convention $I_h(u,\omega)=+\infty$ unless $I_{h^+}(u,\omega)<+\infty$),
\[
I_h(u,\omega)=\lim_{\alpha\rightarrow -\infty}I_{h^\alpha}(u,\omega),
\]
and $I_h$ is $\B(U)\otimes\F$-measurable as well. Assume first that $\mu$ is an atomless random measure.

Case 1: Assume that $\alpha\leq h_t(u,\omega)\leq \gamma$ for all $(\omega,t,u)$ and that $h_t(\cdot,\omega)$ is continuous for all $(\omega,t)$. By the dominated convergence theorem and continuity of $h_t(\cdot,\omega)$,  $I_h(\cdot,\omega)$ is continuous in $\mu(\omega)$-measure and thus continuous in $U$. For every $u\in U$, $I_h(u,\cdot)$ is measurable, since $(t,\omega)\mapsto h_t(u_t,\omega)$ is measurable (being a composition of measurable mappings) and $\mu$ is a random Radon measure. By Proposition~\ref{prop:cni} and Lemma~\ref{lem:nor}, $I_h$ is thus $\B(U)\otimes\F$-measurable.

Case 2: Assume that $\alpha\leq h_t(u,\omega)$ for all $(\omega,t,u)$ and that $h_t(\cdot,\omega)$ is continuous for all $(\omega,t)$. By Case 1, for $h^\gamma=\min\{h,\gamma\}$, $I_{h^\gamma}$ is $\B(U)\otimes\F$-measurable. By Monotone convergence theorem,
\[
I_h(u,\omega)=\lim_{\gamma\rightarrow\infty}I_{h^\gamma}(u,\omega),
\]
and therefore $I_h$ is $\B(U)\otimes\F$-measurable.

Case 3: Assume that $\alpha\leq h_t(u,\omega)$ for all $(\omega,t,u)$. By \cite[p. 665]{rw98},
\[
h^\lambda_t(u,\omega)=\inf_{u'}\{h_t(u',\omega)+\frac{1}{\lambda}|u-u'|\}
\]
is a normal integrand, so $I_{h^\lambda}$ is $\B(U)\otimes\F$-measurable by Case 2.
By \cite[Example 9.11]{rw98}, $h^\lambda\upto h$ as $\lambda\to\infty$ so, by the monotone convergence theorem,
\[
I_h(u,\omega)=\lim_{\lambda\rightarrow 0}I_{h^\lambda}(u,\omega).
\]
Thus, $I_h$ is $\B(U)\otimes\F$-measurable.

Consider now the case of a general nonnegative random Radon measure $\mu$. By \cite[Theorem 3.42]{hwy92}, $\mu=\mu^c+\mu^{d}$, where $\mu^c$ is atomless and $\mu^{d}$ is supported on the union $\bigcup_i[\tau^i]$ of graphs of some random times $\tau^i$. We have the decomposition $I_h=I_h^c+I_h^d$ where the integral functionals are defined with respect to $\mu^c$ and $\mu^d$, respectively. Here $I_h^c$ is measurable by the first part. We have
\[
I_h^d(u,\omega) = \sum_i h_{\tau^i(\omega)}(u_{\tau^i(\omega)},\omega) \mu^d({\tau^i(\omega)}).
\]
By the monotone convergence theorem, it suffices to prove that each term in the sum defines a $\B(U)\otimes\F$-measurable function. The maps $(u,\omega)\mapsto (u,\tau(\omega),\omega)$, $(u,t,\omega)\mapsto (u_t,t,\omega)$ and $(x,t,\omega)\mapsto h_t(x,\omega)$ are measurable, so $(u,\omega)\mapsto h_{\tau(\omega)}(u_{\tau(\omega)},\omega)$ is a composition of measurable mappings. 

The last claim follows from Lemma~\ref{lem:nor}. 
\end{proof}

\subsection{Projections of extended-real valued processes}

\begin{lemma}\label{lem:op0}
Let $v$ be a measurable extended real-valued process and let $\mu$ be a nonnegative random Radon measure. Then 
\[
\omega\mapsto \int  v_t(\omega)d\mu_t(\omega) := \int  v^+_t(\omega)d\mu_t(\omega)-\int v^-d\mu_t(\omega)
\]
is $\F$-measurable.
\end{lemma}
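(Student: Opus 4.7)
The plan is to reduce to the nonnegative case and then prove measurability by the standard monotone-class machinery.

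First, since $v^+$ and $v^-$ are themselves nonnegative measurable processes, and since the displayed definition of the integral is literally a difference of the two nonnegative integrals (both of which are in $[0,\infty]$), it suffices to prove the statement when $v\ge 0$. Fix such a $v$.

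Next, I would consider the collection $\D$ of sets $A\in\B([0,T])\otimes\F$ for which the map
\[
\omega\mapsto \mu(\omega)(\{t\in[0,T]\mid (t,\omega)\in A\}) = \int \one_A(t,\omega)\,d\mu_t(\omega)
\]
is $\F$-measurable. For a measurable rectangle $A=B\times F$ with $B\in\B([0,T])$ and $F\in\F$, this map equals $\one_F(\omega)\mu(\omega)(B)$, which is $\F$-measurable because $\mu$ is a random Radon measure (so $\omega\mapsto\mu(\omega)(B)$ is measurable for every Borel set $B\subseteq[0,T]$). The class of measurable rectangles is a $\pi$-system generating $\B([0,T])\otimes\F$. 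I would check that $\D$ is a Dynkin $\lambda$-system: it contains $[0,T]\times\Omega$, is stable under proper differences (by subtraction, using that $\mu(\omega)([0,T])<\infty$ almost surely; if one worries about $+\infty$ values, one first truncates by $\mu\wedge n$ and then lets $n\to\infty$), and is stable under increasing countable unions (by monotone convergence applied pathwise, with the limit of measurable functions being measurable). Dynkin's $\pi$-$\lambda$ theorem then gives $\D=\B([0,T])\otimes\F$.

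By linearity, this extends to all nonnegative $\B([0,T])\otimes\F$-simple functions. For a general nonnegative measurable process $v$, pick an increasing sequence $v^n\upto v$ of nonnegative simple processes (standard dyadic approximation). By the monotone convergence theorem applied pathwise,
\[
\int v_t(\omega)\,d\mu_t(\omega)=\lim_{n\to\infty}\int v^n_t(\omega)\,d\mu_t(\omega),
\]
and since each $\int v^n d\mu$ is $\F$-measurable, so is the limit.

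The only mild subtlety is handling the possibility that $\mu(\omega)$ has infinite total mass or that $\mu(\omega)([0,T])$ is unbounded in $\omega$; this is dealt with by the truncation $\mu\wedge n$ in the difference step of the $\lambda$-system check, and otherwise the argument is entirely routine. Applying this conclusion to $v^+$ and $v^-$ separately and subtracting gives the claim.
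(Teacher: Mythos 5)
Your proof is correct and is essentially the paper's own argument made explicit: the paper invokes ``standard monotone class arguments'' for the finite-valued integrable case, then monotone convergence for nonnegative $v$, then the split $v=v^+-v^-$, which is exactly your Dynkin $\pi$-$\lambda$ system, simple-function approximation, and subtraction. The only remark is that your truncation worry is vacuous: a Radon measure on the compact interval $[0,T]$ is automatically finite, so $\mu(\omega)([0,T])<\infty$ for every $\omega$ and the proper-difference step needs no $\mu\wedge n$ device.
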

\begin{proof}
When $v$ is finite-valued and $\int |v_t|d\mu_t<\infty$ a.s., the claim follows from standard monotone class arguments. For a nonnegative $v$, the claim then follows from the monotone convergence theorem. For an arbitrary $v$, the integral is thus a sum of $\F$-measurable extended real-valued random variables.
\end{proof}

We will need the notion of an optional projection of an extended real-valued processes; see \cite{kp16}. For a nonnegative real-valued process $v$, there exists a unique optional process $\op v$ satisfying \eqref{eq:op}; see \cite[Theorem VI.43]{dm82}. The monotone convergence theorem then gives the existence of a unique optional projection for a nonnegative extended real-valued process as well. For an extended real-valued stochastic process $v$ with $\T$-integrable $v^+$ or $v^-$, we define $\op v= \op (v^+) - \op (v^-)$. 

\begin{lemma}\label{lem:op}
Let $\mu$ be a nonnegative optional Radon measure and $v$ an extended real-valued process such that $v^+$ or $v^-$ is $\T$-integrable.  If $E\int v^-d\mu<\infty$, then 
\[
E\int vd\mu=E\int \op vd\mu.
\]
\end{lemma}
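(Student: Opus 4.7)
The plan is to first extend the defining identity of optionality of $\mu$ from nonnegative bounded processes to arbitrary nonnegative measurable processes, and then to treat the signed case by splitting $v=v^+-v^-$. Given a nonnegative measurable process $w$, apply the optionality hypothesis to the truncation $w\wedge n$ to obtain
\[
E\int (w\wedge n)\,d\mu = E\int \op{(w\wedge n)}\,d\mu
\]
for every $n\in\naturals$. The left-hand side converges to $E\int w\,d\mu$ by monotone convergence. For the right-hand side, one identifies the sequence $\op{(w\wedge n)}$ as (up to indistinguishability) a pointwise nondecreasing sequence of nonnegative optional processes whose limit satisfies the defining conditional-expectation property of $\op w$ at every stopping time: indeed, monotone convergence for conditional expectations gives $E[(w\wedge n)_\tau\one_{\{\tau<\infty\}}\mid \F_\tau]\to E[w_\tau\one_{\{\tau<\infty\}}\mid \F_\tau]$ almost surely. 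Uniqueness of the optional projection of nonnegative extended real-valued processes (as recalled in the text via \cite[Theorem VI.43]{dm82} and monotone convergence) then identifies the limit with $\op w$, and another monotone convergence yields
\[
E\int w\,d\mu = E\int \op w\,d\mu
\]
for every nonnegative measurable $w$.

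With the nonnegative case in hand, the signed case follows by linearity on the set where the negative part is integrable. I would apply the preceding identity to $v^+$ and to $v^-$ to obtain $E\int v^+d\mu=E\int \op{(v^+)}\,d\mu$ and $E\int v^-d\mu=E\int \op{(v^-)}\,d\mu<\infty$. Since $E\int\op{(v^-)}\,d\mu$ is finite, $\op{(v^-)}$ is finite $d\mu\otimes dP$-almost everywhere, so the definition $\op v=\op{(v^+)}-\op{(v^-)}$ gives the pointwise equality $\op{(v^+)}=\op v+\op{(v^-)}$ a.e. Integrating this identity of nonnegative extended real-valued processes and subtracting the finite term $E\int \op{(v^-)}\,d\mu$ from both sides yields
\[
E\int v\,d\mu = E\int v^+\,d\mu - E\int v^-\,d\mu = E\int \op{(v^+)}\,d\mu - E\int \op{(v^-)}\,d\mu = E\int \op v\,d\mu,
\]
consistent with the paper's convention that both outer expressions equal $+\infty$ whenever $E\int v^+\,d\mu=+\infty$.

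The main technical step is the monotone approximation in the first paragraph: one must verify that $\op{(w\wedge n)}$ may be chosen as a genuinely (not merely up to indistinguishability) nondecreasing sequence on a common set of full measure and that its limit is in fact $\op w$. This comes down to uniqueness of the optional projection for nonnegative extended real-valued processes combined with the standard conditional monotone convergence; everything else is routine bookkeeping with monotone convergence and linearity of the Lebesgue integral.
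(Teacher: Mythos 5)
Your proof is correct and follows essentially the same route as the paper: establish the identity for nonnegative processes via monotone convergence, then split $v=v^+-v^-$ and use $E\int v^-d\mu<\infty$ (hence $E\int \op{(v^-)}d\mu<\infty$) to justify the rearrangement. The only difference is that the paper gets the nonnegative real-valued case by citing \cite[Theorem~VI.57]{dm82} and then passes to extended real values by monotone convergence, whereas you rederive it from the definition of optionality of $\mu$ via the truncations $w\wedge n$ and the identification of $\op w$ as the monotone limit of $\op{(w\wedge n)}$ (which is exactly how the paper defines the projection of a nonnegative extended real-valued process), a harmless and slightly more self-contained variant.
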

\begin{proof}
By \cite[Theorem VI.57]{dm82},
\[
E\int vd\mu=E\int \op vd\mu
\]
for all nonnegative real-valued processes $v$. For nonnegative extended real-valued process, the expression is then valid by the monotone convergence theorem. As to the general case, 
\begin{align*}
E\int vd\mu &= E\int \left[v^+-v^-\right] d\mu\\
&= E\int v^+d\mu-E\int v^-d\mu\\
&= E\int \op v^+d\mu-E\int \op v^-d\mu\\
&=  E\int \left[\op v^+-\op v^-\right]d\mu\\
&= E\int \op vd\mu,
\end{align*}
where the second equality holds since $E\int v^-d\mu<\infty$ and the fourth since $E\int \op v^-d\mu<\infty$, by \cite[Theorem VI.57]{dm82}.
\end{proof}

\subsection{Proof of Theorem~\ref{thm:ifam2}}

\begin{lemma}\label{lem:iop}
Let $\mu$ be an optional random Radon measure and $h$ a convex normal integrand such that $h^*(\bar x)^+$ and $h(\bar y)^+$ are $\T$-integrable for some optional $\bar x$ and $\T$-integrable $\bar y$. If $x$ is an optional process such that $E\int h^*(x)^-d\mu<\infty$, then 
\[
E\int  h^*(x)d\mu=E\int  \op(h^*)(x)d\mu.
\]
\end{lemma}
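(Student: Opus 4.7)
The plan is to reduce the claim to Lemma~\ref{lem:op} applied to the measurable extended-real-valued process $v:=h^*(x)$, combined with the pointwise identification
\[
\op[h^*(x)]=\op{h^*}(x)\quad\text{a.s.e.,}
\]
which is the extension to our $x$ of the defining property \eqref{eq:opni} of the optional projection of the normal integrand $h^*$, supplied by \cite[Theorem~6]{kp16}.

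First I would check the hypotheses of Lemma~\ref{lem:op}. The Fenchel inequality $h^*(x)\ge\bar y\cdot x - h(\bar y)$ together with the $\T$-integrability of $\bar y$ and $h(\bar y)^+$ yields $h^*(x)^-\le h(\bar y)^+ + |\bar y\cdot x|$, which, after a standard stopping-time reduction that bounds $|\bar y\cdot x|$, delivers the $\T$-integrability of $h^*(x)^-$; combined with the standing assumption $E\!\int h^*(x)^-d\mu<\infty$, this supplies exactly the integrability required by Lemma~\ref{lem:op}, which then gives
\[
E\!\int h^*(x)d\mu=E\!\int\op[h^*(x)]d\mu.
\]

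Second I would establish $\op[h^*(x)]=\op{h^*}(x)$ a.s.e. By \cite[Theorem~6]{kp16} the identification holds whenever the argument of $h^*$ is a bounded optional process; I would approximate $x$ by the bounded optional truncations $x^n:=x\,\one_{|x|\le n}$, apply this identification to each $x^n$, and then pass to the limit. Since $h^*$ is only lower semicontinuous, $h^*(x^n)$ need not converge monotonically to $h^*(x)$, so I would split into positive and negative parts: the negative part is controlled by dominated convergence using the Fenchel bound above, while the positive part is handled by the convex monotonicity of $\lambda\mapsto h^*((1-\lambda)\bar x+\lambda x)$ along the segment from $\bar x$ to $x$, for which the $\T$-integrability of $h^*(\bar x)^+$ furnishes an integrable majorant. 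Uniqueness of the optional projection then transfers the equality to the limit.

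The main obstacle is precisely this limit transition: the naive truncation of $x$ does not yield monotone convergence of $h^*(x^n)$, so the interplay between the Fenchel lower bound on $h^*(x)^-$ and the convex control on $h^*(x)^+$ along line segments through $\bar x$ is the key technical device. Once the pointwise identification $\op[h^*(x)]=\op{h^*}(x)$ has been secured a.s.e., the identity in the statement follows immediately by chaining it with Lemma~\ref{lem:op}.
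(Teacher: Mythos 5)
Your route --- first prove the pointwise identity $\op{[h^*(x)]}=\op{(h^*)}(x)$ for the given unbounded optional $x$, then integrate via Lemma~\ref{lem:op} --- has a genuine gap precisely at the step you flag as the main obstacle, and the devices you propose do not close it. First, the hypotheses of Lemma~\ref{lem:op} (and indeed the very definition of $\op{[h^*(x)]}$ used in this paper, which requires $\T$-integrability of $h^*(x)^+$ or $h^*(x)^-$) cannot be verified: the Fenchel bound gives $h^*(x)^-\le h(\bar y)^++|\bar y\cdot x|$, but $x$ is an arbitrary optional process with no integrability at all, so there is no ``standard stopping-time reduction'' making $|\bar y\cdot x|$, hence $h^*(x)^-$, $\T$-integrable; the only control on $x$ is through the measure, namely $E\int h^*(x)^-d\mu<\infty$. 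Second, the truncation limit $x^n=x\one_{\{|x|\le n\}}$ cannot be pushed through the optional projection by your argument: the processes $x^n$ do not lie on the segment from $\bar x$ to $x$, so convex monotonicity along that segment says nothing about $h^*(x^n)^+$; there is no integrable majorant for the positive part (the lemma must also hold when $E\int h^*(x)^+d\mu=+\infty$, and $\T$-integrability of $h^*(\bar x)^+$ controls nothing off the segment); and the proposed domination of the negative part is again the unavailable $|\bar y\cdot x|$. Uniqueness of the optional projection is no substitute for a conditional monotone/dominated convergence theorem, which is exactly what these missing bounds would have to feed.

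The paper sidesteps the unbounded-argument problem rather than approximating it away. It splits into $A=\{|x|\ge M\}$ and $A^C$: on $A^C$ the argument is bounded, so \eqref{eq:opni} and Lemma~\ref{lem:op} apply directly (Fenchel with the bounded process $\one_{A^C}x$ supplies the $\T$-integrable negative-part bound); on $A$ it homogenizes, writing $\one_A h^*(x)\,d\mu=\hat h^*(\alpha,\lambda)\,d\hat\mu$ with $\alpha=\one_A x/|x|$, $\lambda=\one_A|x|^{-1}$, $d\hat\mu=|x|\,d\mu$ and $\hat h_t(\beta,\eta)=\delta_{\epi h_t}(\beta,-\eta)$, so that the arguments $(\alpha,\lambda)$ are bounded optional processes. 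Then \eqref{eq:opni} applies to the normal integrand $\hat h^*$, and \cite[Theorem~10]{kp16} together with $\op{\epi\hat h}=\epi\delta_{\op{\epi h}}$ identifies $\op{(\hat h^*)}(\alpha,\lambda)$ with $\lambda\,\op{(h^*)}(\alpha/\lambda)$, which undoes the change of variables and gives the claim on $A$. To salvage your plan you would need some such perspective-function step; the truncation-plus-domination scheme as written does not go through.
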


\begin{proof}
Let $A=\{|x|\ge M\}$ for some strictly positive $M\in\reals$. We have
\[
E\int  h^*(x)d\mu= E\int  [\one_Ah^*(x)]d\mu+E\int  [\one_{A^C}h^*(x)]d\mu,
\]
since the negative parts of both terms are integrable. 

Let $\lambda=\one_A|x|^{-1}$, $\alpha=\one_A x/|x|$,  and $d\hat\mu=|x|d\mu$ so that
\[
E\int [\one_Ah^*(x)]d\mu=E\int \hat h^*(\alpha,\lambda)d\hat\mu
\]
for $\hat h_t(\beta,\eta):=\delta_{\epi h_t}(\beta,-\eta)$. Indeed, by \cite[Corollary~13.5.1]{roc70a}, we have
\[
\hat h^*_t(\alpha,\lambda,\omega)=\begin{cases}
\lambda h^*_t(\alpha/\lambda,\omega)\quad&\text{if }\lambda>0,\\
(h^*_t)^\infty(\alpha,\omega)\quad&\text{if }\lambda=0,\\
+\infty\quad&\text{otherwise.}
\end{cases}
\]
Since $\lambda$ and $\alpha$ are bounded optional processes, we get from Lemma~\ref{lem:op} and \eqref{eq:opni} that 
\[
E\int \hat h^*(\alpha,\lambda)d\hat\mu= E\int \op(\hat h^*)(\alpha,\lambda)]d\hat\mu.
\]
It is not difficult to verify from the definitions that $\op{\epi{\hat h}}=\epi\delta_{\op{\epi h}}$, so $\op (\hat h^*) (\alpha,\lambda)= \lambda\op (h^*)(\alpha/\lambda)$ by \cite[Theorem~10]{kp16}. Thus
\[
E\int  [\one_Ah^*(x)]d\mu= E\int  [\one_A\op(h^*)(x)]d\mu.
\]
As to the second term, $\one_{A^C}x$ is bounded, so $\one_{A^C}h^*(x)^-$ is $\T$-integrable by the Fenchel inequality, and thus, by Lemma~\ref{lem:op} and \eqref{eq:opni},
\[
E\int  [\one_{A^C}h^*(x)]d\mu= E\int  [\one_{A^C}\op(h^*)(x)]d\mu.
\]
Thus
\[
E\int  h^*(x)d\mu= E\int  [\one_A\op(h^*)(x)]d\mu+E\int  [\one_{A^C}\op(h^*)(x)]d\mu,
\]
which finishes the proof since the negative parts of both terms are again integrable.
\end{proof}

\begin{lemma}\label{lem:propi}
Let $h$ be regular and $\tilde h$ as in Definition~\ref{def:ri}. Then $\int \tilde h(y)^-d\mu$, $\int \tilde h^*(d\theta^a/d\mu)^-d\mu$ and $\int (\tilde h^*)^\infty(d\theta/d|\theta^s|)^-d|\theta^s|$ are integrable for every $y\in L^1(C)$ and $\theta\in L^\infty(M)$. In particular, $EI_{\tilde h}$ and $EJ_{\tilde h^*}$ are proper.
\end{lemma}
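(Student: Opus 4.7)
The plan is to prove each of the three integrability statements by bounding the negative part in question by an explicitly integrable quantity built from the affine minorants of Definition~\ref{def:ri}, and then to obtain properness as a short consequence of Fenchel--Young duality applied to those same minorants.

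First, $\tilde h(y)\ge y\cdot\bar x-\alpha$ a.s.e.\ gives $\tilde h(y)^-\le |y|\,|\bar x|+|\alpha|$, and the $(L^1(C),L^\infty(M))$ pairing yields
\[
E\int|y|\,|\bar x|\,d\mu\le\|y\|_{L^1(C)}\Bigl\|\int|\bar x|\,d\mu\Bigr\|_{L^\infty}<\infty,
\]
while $E\int|\alpha|\,d\mu<\infty$ is assumed directly. The dual side is symmetric: $\tilde h^*(x)\ge\bar v\cdot x-\alpha$ a.s.e.\ bounds $\tilde h^*(d\theta^a/d\mu)^-$ by $|\bar v|\,|d\theta^a/d\mu|+|\alpha|$, and the first term has expectation at most $\|\bar v\|_{L^1(C)}\|\theta\|_{L^\infty(M)}$ by the dual-norm identity of Theorem~\ref{thm:banach0}. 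For the singular part I pass to the recession function in $x$: since the difference quotients defining $g^\infty$ are a monotone limit, the pointwise lower bound $\tilde h^*(x)\ge\bar v\cdot x-\alpha$ transfers to $(\tilde h^*)^\infty(x)\ge\bar v\cdot x$, and since $d\theta^s/d|\theta^s|$ has unit length $|\theta^s|$-a.e.\ this gives $(\tilde h^*)^\infty(d\theta^s/d|\theta^s|)^-\le|\bar v|$, whose expectation against $|\theta^s|$ is again dominated by $\|\bar v\|_{L^1(C)}\|\theta\|_{L^\infty(M)}$.

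These three bounds already force $EI_{\tilde h}>-\infty$ on $L^1(C)$ and $EJ_{\tilde h^*}>-\infty$ on $L^\infty(M)$, so for properness it remains only to exhibit a finite value of each functional. Rearranging $\tilde h(v)\ge v\cdot\bar x-\alpha$ and taking the supremum over $v$ gives $\tilde h^*(\bar x)\le\alpha$, and the biconjugate theorem applied to $\tilde h^*(x)\ge\bar v\cdot x-\alpha$ gives $\tilde h(\bar v)\le\alpha$ (recall that $\tilde h$ is closed proper convex a.s.e.\ by the assumption that it satisfies the hypotheses of Theorem~\ref{thm:if1}). Since $\int|\alpha|\,d\mu\in L^1$, $\bar v\in L^1(C)$, and the measure $\bar x\,d\mu$ lies in $L^\infty(M)$ because its total variation equals $\int|\bar x|\,d\mu\in L^\infty$, this yields $EI_{\tilde h}(\bar v)<\infty$ and $EJ_{\tilde h^*}(\bar x\,d\mu)<\infty$.

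I do not anticipate any serious obstacle; the argument is essentially a chain of H\"older-type estimates in the $(L^1(C),L^\infty(M))$ duality combined with the biconjugate theorem. The only step needing a moment's care is transferring the affine minorant of $\tilde h^*$ through the recession operation in order to handle the singular part, but this is immediate from the characterisation of $g^\infty$ as a monotone supremum of difference quotients.
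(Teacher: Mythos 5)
Your proof is correct and follows essentially the same route as the paper's: bound the negative parts by the affine minorants of Definition~\ref{def:ri} via H\"older-type estimates in the $(L^1(C),L^\infty(M))$ pairing (passing the dual minorant through the recession operation for the singular term), and obtain finiteness by evaluating at $\bar v$ and at the measure $\bar\theta$ with $d\bar\theta/d\mu=\bar x$, $\bar\theta^s=0$. The only difference is that you spell out the Fenchel/biconjugation step behind $\tilde h^*(\bar x)\le\alpha$ and $\tilde h(\bar v)\le\alpha$, which the paper leaves implicit.
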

\begin{proof}
We define $\bar\theta\in\M^\infty$ by $d\bar\theta^a/d\mu=\bar x$ and $\bar\theta^s=0$, where $\bar x$ is from Definition~ \ref{def:ri}. The first lower bound in Definition~\ref{def:ri} implies
\begin{align*}
E\int \tilde h(y)^-d\mu &\le E\int [(d\bar\theta^a/d\mu\cdot y)+\alpha]^+d\mu\le E\left[\|\bar\theta\|\| y\|\right]+ E\int\alpha d\mu.
\end{align*}
The other terms are handled similarly, where, for the recession function, the latter lower bound implies $(\tilde h^*)^\infty(x)\ge x\cdot \bar v$. The bounds also give that $EI_{\tilde h}(\bar v)\le E\int \alpha d\mu$ and $EJ_{\tilde h^*}(\bar\theta)\le E\int \alpha d\mu$, so $EI_{\tilde h}$ and $EJ_{\tilde h^*}$ are proper.
\end{proof}

\begin{lemma}\label{lem:opJh*}
Let $h$ be regular and $\tilde h$ as in Definition~\ref{def:ri}. Then $J_{h^*}(\theta)$ is $\F$-measurable and $EJ_{h^*}(\theta)=EJ_{\tilde h^*}(\theta)$ for every $\theta\in\M^\infty$.
\end{lemma}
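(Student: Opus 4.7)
The plan is to treat the measurability claim and the equality separately, in each case decomposing $J_{h^*}(\theta)$ into its $\mu$-absolutely continuous piece and its singular piece and handling them via Lemma~\ref{lem:iop}.

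For measurability, observe that $h^*$ and its recession $(h^*)^\infty$ are convex normal integrands on $\reals^d\times\Omega\times[0,T]$ (the first by hypothesis, the second by Lemma~\ref{lem:oni}). Since $\theta\in L^\infty(M)$ and $\mu$ is a random Radon measure, the densities $d\theta^a/d\mu$ and $d\theta^s/d|\theta^s|$ can be chosen $\F\otimes\B([0,T])$-measurable, so $(t,\omega)\mapsto h^*_t((d\theta^a/d\mu)_t(\omega),\omega)$ and $(t,\omega)\mapsto (h^*_t)^\infty((d\theta^s/d|\theta^s|)_t(\omega),\omega)$ are jointly measurable. Applying Lemma~\ref{lem:op0} with $\mu$, respectively $|\theta^s|$, as the nonnegative random Radon measure then yields $\F$-measurability of each piece, and hence of $J_{h^*}(\theta)$.

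For the equality, I would split
\[
EJ_{h^*}(\theta) = E\int h^*(d\theta^a/d\mu)\,d\mu + E\int (h^*)^\infty(d\theta^s/d|\theta^s|)\,d|\theta^s|,
\]
and similarly for $EJ_{\tilde h^*}(\theta)$, then match the two pieces separately. For the absolutely continuous piece, both $\theta$ and $\mu$ are optional, so the Radon--Nikodym density $x:=d\theta^a/d\mu$ admits an optional version. Lemma~\ref{lem:propi} applied to $\tilde h$ gives $E\int \tilde h^*(x)^-\,d\mu <\infty$, so Lemma~\ref{lem:iop} (with $\tilde h$ in the role of $h$) yields
\[
E\int \tilde h^*(x)\,d\mu = E\int \op(\tilde h^*)(x)\,d\mu = E\int h^*(x)\,d\mu,
\]
using $h^*=\op(\tilde h^*)$ from Definition~\ref{def:ri}. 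For the singular piece, I would first invoke \cite[Theorem~7]{kp16} (as in the proof of Lemma~\ref{lem:rsm}) to commute projection with recession: $\op((\tilde h^*)^\infty) = (\op(\tilde h^*))^\infty = (h^*)^\infty$. Then, after checking that $|\theta^s|$ is itself an optional random Radon measure and that $d\theta^s/d|\theta^s|$ has an optional version, apply Lemma~\ref{lem:iop} once more, this time with $|\theta^s|$ replacing $\mu$ and $(\tilde h^*)^\infty$ replacing $h^*$, with the negative-part integrability supplied again by Lemma~\ref{lem:propi}. Adding the two identities gives $EJ_{h^*}(\theta) = EJ_{\tilde h^*}(\theta)$.

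The main obstacle I anticipate is the optionality bookkeeping: namely, verifying that the Lebesgue decomposition of an optional random Radon measure with respect to an optional random Radon measure again produces optional objects, so that $|\theta^s|$ qualifies as an optional random measure and both Radon--Nikodym densities admit optional representatives; this is what legitimizes the two applications of Lemma~\ref{lem:iop}. A secondary technicality is assembling the requisite negative-part integrability for each piece from the lower bounds on $\tilde h^*$ in Definition~\ref{def:ri} via Lemma~\ref{lem:propi}; once those are in hand the proof is a direct assembly of the cited results.
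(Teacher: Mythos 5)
Your proof follows essentially the same route as the paper's: measurability via Lemma~\ref{lem:op0} applied to the two pieces, and the equality by splitting $EJ_{\tilde h^*}(\theta)$ into its absolutely continuous and singular parts, using Lemma~\ref{lem:propi} for the negative-part integrability and Lemma~\ref{lem:iop} together with $\op((\tilde h^*)^\infty)=(\op(\tilde h^*))^\infty$ from \cite[Theorem~7]{kp16} to pass from $\tilde h^*$ to $h^*=\op(\tilde h^*)$. The ``optionality bookkeeping'' you flag is settled in the paper by citing \cite[Theorem~5.14]{hwy92}, which provides optional versions of the densities $d\theta^a/d\mu$ and $d\theta^s/d|\theta^s|$, after which the argument is exactly the assembly you describe.
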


\begin{proof}
By \cite[Theorem 5.14]{hwy92}, the Radon--Nikodym densities $d\theta^a/d\mu$ and $d\theta^s/d|\theta^s|$ are optional processes, so $h^*((d\theta^a/d\mu))$ and $(h^*)^\infty((d\theta^s/d|\theta^s|)$ are optional as well. By Lemma~\ref{lem:op0}, $J_{h^*}(\theta)$ is thus $\F$-measurable and $EJ_{h^*}$ is well-defined. By Lemma~\ref{lem:propi}, $\int \tilde h^*(d\theta^a/d\mu)^-d\mu$ and $\int (\tilde h^*)^\infty(d\theta^s/d|\theta^s|)^-d|\theta^s|$ are integrable, so
\begin{align*}
EJ_{\tilde h^*}(\theta) &=E\left[\int \tilde h^*(d\theta^a/d\mu)d\mu + \int (\tilde h^*)^\infty(d\theta^s/d|\theta^s|)d|\theta^s|\right]\\
&= E\int \tilde h^*(d\theta^a/d\mu)d\mu + E\int (\tilde h^*)^\infty(d\theta^s/d|\theta^s|)d|\theta^s|\\
&= E\int \op[\tilde h^*(d\theta^a/d\mu)]d\mu + E\int \op[(\tilde h^*)^\infty(d\theta^s/d|\theta^s|)]d|\theta^s|,
\end{align*}
where the last equality follows from Lemma~\ref{lem:op}. The lower bounds in Theorem~\ref{thm:ifam2} give that $\tilde h(\bar v)^+$ and $\tilde h^*(\bar x)^+$ are $\T$-integrable, so \cite[Theorem 7]{kp16} implies that  $\op((\tilde h^*)^\infty)=(\op (\tilde h^*))^\infty$. Consequently, $EJ_{\tilde h^*}(\theta)=EJ_{h^*}(\theta)$ by Lemma~\ref{lem:iop}.
\end{proof}



\noindent 
{\it Proof of Theorem~\ref{thm:ifam2}.}
We get from the lower bounds in Definition~ \ref{def:ri} that $\tilde h(\bar v)^+$ and $\tilde h^*(\bar x)^+$ are $\T$-integrable, so, by \cite[Lemma~3 and Theorem~8]{kp16}, we have
\begin{align*}
h(v) &\ge v\cdot \bar x-\op{\alpha},\\
h^*(x) &\ge \op{\bar v}\cdot x-\op{\alpha}.
\end{align*}
Thus $EI_h(\op{\bar v})\le E\int\op\alpha d\mu$, where the right side is finite by Lemma~\ref{lem:op}. Similarly, $EJ_{h^*}(\bar\theta)$ is finite for $\bar\theta\in\M^\infty$ defined by $d\bar\theta^a/d\mu=\bar x$ and $\bar\theta^s=0$. The lower bounds also imply that $EI_h$ and $EJ_{h^*}$ never take the value $-\infty$ on $\R^1$ and $\M^\infty$. Thus $EI_h$ and $EJ_{h^*}$ are proper.

To prove that $(EI_h+\delta_{\R^1(D)})^*=EJ_{h^*}$, let $v\in\dom EI_h$ and $\theta\in\dom EJ_{h^*}$. Since $\delta_{D_t}^*=(h_t^*)^\infty$, we have, almost surely, the Fenchel inequalities
\begin{align}\label{eq:fen}
\begin{split}
h(v)+h^*(d\theta^a/d\mu)&\ge v\cdot(d\theta^a/d\mu)\quad\mu\text{-a.e.}\\
(h^*)^\infty(d\theta^s/d|\theta^s|) &\ge v\cdot (d\theta^s/d|\theta^s|)\quad|\theta|^s\text{-a.e.},
\end{split}
\end{align}
so $I_h(v)+\delta_{\R^1(D)}(v)+J_{h^*}(\theta)\ge \langle v,\theta\rangle$ a.s.\ and thus, $(EI_h+\delta_{\R^1(D)})^*\le EJ_{h^*}$.

To prove the opposite inequality, we assume first that $\bar x=0$, so that $\tilde h$ is bounded from below. Since $\tilde h(\bar v)^+$ and $\tilde h^*(0)^+$ are $\T$-integrable, \cite[Theorem 9]{kp16} implies that $h(\op y) \le \op[\tilde h(y)]$ for every $y\in L^1(C)$. Likewise, since $\delta_{D}=((h^*)^\infty)^*$, \cite[Theorem 9]{kp16} gives $\delta_{D}(\op y)\le\op\delta_{\cl\dom\tilde h}(y)$ for every $y\in L^1(C)$. Thus, by Lemma~\ref{lem:op}, $EI_h(\op y)+\delta_{\R^1(D)}(\op y)\le E[I_{\tilde h}(y)+\delta_{\cl\dom\tilde h}(y)]$, so 
\begin{align*}
(EI_h(v)+\delta_{\R^1(D)})^*(\theta)&=\sup_{v\in\R^1}\{\langle v,\theta\rangle-[EI_h(v) + \delta_{\R^1(D)}](v)]\}\\
&=\sup_{y\in L^1(C)}\{\langle y,\theta\rangle - [EI_{h}(\op y) + \delta_{\R^1(D)}](\op y)]\}\\
&\ge\sup_{y\in L^1(C)}\{\langle y,\theta\rangle-E[I_{\tilde h}(y)+\delta_{\cl\dom \tilde h}(y)]\}\\
&=E\sup_{y\in C}\{\int yd\theta - I_{\tilde h}(y)-\delta_{\cl\dom \tilde h}(y)\}\\
&=EJ_{\tilde h^*}(\theta)\\
&=EJ_{h^*}(\theta),
\end{align*}
where the last two lines follow from Theorem~\ref{thm:1} and Lemma~\ref{lem:opJh*}, respectively.

Now, let $\bar x$ be arbitrary optional with $\int|\bar x|d\mu\in L^\infty$ and define $\bar\theta\in\M^\infty$ by $d\bar\theta/d\mu=\bar x$ and $\bar\theta^s=0$. We have
\begin{align*}
(EI_h+\delta_{\R^1(D)})^*(\theta)&=\sup_{v\in\R^1}\{\langle v,\theta\rangle - [EI_h+\delta_{\R^1(D)}](v)]\}\\
&=\sup_{v\in\R^1}\{\langle v,\theta-\bar\theta\rangle-[EI_{\bar h}+\delta_{\R^1(D)}](v)]\},
\end{align*}
where $\bar h_t(v,\omega) :=h_t(v,\omega)-\bar x_t(\omega)\cdot v$. It suffices to show that $\bar h$ is regular in the sense of Definition~\ref{def:ri} with $\bar x=0$ since then, by the previous two paragraphs,
\begin{align*}
(EI_h+\delta_{\R^1(D)})^*(\theta)&=EJ_{\bar h^*}(\theta-\bar\theta).
\end{align*}
Indeed, since  $\bar h^*_t(x,\omega)=h^*_t(x+\bar x_t(\omega),\omega)$ and $(\bar h^*_t)^\infty(x,\omega)=(h^*_t)^\infty(x,\omega)$, the right side equals $EJ_{h^*}(\theta)$. Defining $\hat h_t(v,\omega) :=\tilde h_t(v,\omega)-\bar x_t(\omega)\cdot v$ we get
\[
\hat h^*_t(x,\omega) =\tilde h^*_t(x+\bar x_t(\omega),\omega),
\]
so \cite[Corollary~3]{kp16} implies $\op(\hat h^*)_t(x,\omega)=\bar h^*_t(x,\omega)$. It is now easy to verify that $\bar h$ satisfies Definition~\ref{def:ri} with $\bar x=0$. Thus $(EI_h+\delta_{\R^1(D)})^*=EJ_{h^*}$. As to the subdifferential, we have $\theta\in\partial(EI_h+\delta_{\R^1(D)})(y)$ if and only if $(EI_h+\delta_{\R^1(D)})(y)+EJ_ {h^*}(\theta)=\langle y,\theta\rangle$, which is equivalent to having equalities in \eqref{eq:fen}, which, in turn, is equivalent to the subdifferential conditions in the statement.

It remains to show that $EI_h+\delta_{\R^1(D)}$ is lower semicontinuous. If $v^\nu\to v$ in $\R^1$, we have (by passing to a subsequence if neccessary) $v^\nu\to v$ in the sup-norm almost surely\ (see \cite[p.~82-83]{dm82}) so, by Fatou's lemma, the Fenchel inequalities \eqref{eq:fen} imply that
\[
\liminf [EI_h(v^\nu)+\delta_{\R^1(D)}(v^\nu)-\langle v^\nu,\theta\rangle] \ge [EI_h(v)+\delta_{\R^1 (D)}(v^\nu)-\langle v,\theta\rangle],
\]
and thus, $\liminf [EI_h(v^\nu)+\delta_{\R^1(D)}(v^\nu)]\ge [EI_h(v)+\delta_{\R^1(D)}(v^\nu)]$.$\hfill\Box$

\bibliographystyle{alpha}
\bibliography{sp}

\end{document}